\newtheorem{theorem}{Theorem}[section]
\newtheorem*{MAIN}{Theorem~\ref{thm:main}}
\newtheorem{lemma}[theorem]{Lemma}
\newtheorem{proposition}[theorem]{Proposition}
\newtheorem{corollary}[theorem]{Corollary}
\newcommand\rwd{\operatorname{rwd}}
\newcommand\A{\operatorname{\mbox{\boldmath $A$}}}
\newcommand\pivot{\wedge}
\newcommand\rank{\operatorname{rank}}
\newcommand\slantfrac[2]{\hbox{$\,^{#1}\!/_{#2}$}}
\newcommand\abs[1]{\lvert#1\rvert}
\newcommand\rc[2]{$#1^{+#2}$-rank-connected}
\begin{document}
\title{Rank connectivity and pivot-minors of graphs}
\author{Sang-il Oum}
\address{\small Discrete Mathematics Group, Institute~for~Basic~Science~(IBS), Daejeon,~South~Korea.}
\address{\small Department of Mathematical Sciences, KAIST,  Daejeon,~South~Korea.}
\thanks{Supported by the Institute for Basic Science (IBS-R029-C1).}
\email{sangil@ibs.re.kr}
\date{October 11, 2022}

\keywords{rank-width, rank connectivity, split decomposition, pivot-minor, vertex-minor}
\begin{abstract}
  The cut-rank of a set $X$ in a graph $G$ is the rank of the $X\times (V(G)-X)$ submatrix of the adjacency matrix over the binary field.
  A \emph{split} is a partition of the vertex set into two sets $(X,Y)$ such that the cut-rank of $X$ is less than $2$ and both $X$ and $Y$ have at least two vertices.
  A graph is \emph{prime} (with respect to the split decomposition) if it is connected and has no splits.
  A graph $G$ is $k^{+\ell}$-rank-connected if for every set $X$ of vertices with the cut-rank less than $k$, $\lvert X\rvert$ or $\lvert V(G)-X\rvert $ is less than $k+\ell$.
  We prove that
  every prime $3^{+2}$-rank-connected graph $G$ with at least $10$ vertices
  has a prime $3^{+3}$-rank-connected
  pivot-minor $H$ such that $\lvert V(H)\rvert =\lvert V(G)\rvert -1$.
  As a corollary,
  we show that every excluded pivot-minor for the class of graphs of rank-width
  at most $k$ has at most $(3.5 \cdot 6^{k}-1)/5$ vertices
  for $k\ge 2$.
  We also show that the excluded pivot-minors for the class of graphs
  of rank-width at most $2$ have at most $16$ vertices.
\end{abstract}
\maketitle

\section{Introduction}
\label{sec:intro}
For a subset $X$ of vertices of a graph $G$, the \emph{cut-rank}
function $\rho_G(X) $ is the rank of an $\abs{X}\times (\abs{V(G)}-\abs{X})$
matrix over $\mathrm{GF}(2)$ whose rows and columns are indexed by
vertices in $X$ and $V(G)-X$, respectively, and the entry is
$1$ if and only if the vertices corresponding to the row and the column, respectively,
are adjacent in $G$.

For integers $k$ and $l$,
a graph $G$ is called \emph{\rc{k}{\ell}} if
for every subset $X$ of $V(G)$,
$\min(\abs{X},\abs{V(G)-X})<k+\ell$ whenever $\rho_G(X)<k$.
A graph is \emph{$k$-rank-connected} if
it is \rc{m}{0}{} for all integers $m\le k$.\footnote{
  Unlike the usual definition, 
  the definition of \rc{k}{\ell}ness does not require $k'$-rank-connectedness for $k'<k$. Our intention is to make some statements, in particular Proposition~\ref{prop:rankconn}, easier to be stated.
}
The \emph{rank connectivity} of a graph $G$
is the maximum  integer $k$ such that
$G$ is $k$-rank-connected.
All graphs are $0$-rank-connected
and the $1$-rank-connected graphs are exactly the connected graphs.
The $2$-rank-connected graphs are exactly \emph{prime} graphs with
respect to split decompositions. Prime graphs have been studied by
several researchers, notably~\cite{Cunningham1982,Bouchet1987b,Spinrad1989}.

We are interested in proving a ``chain theorem,'' which
guarantees the existence of a highly-connected large substructure in a `well-connected'
structure.
Such theorems are useful tools to be used with induction.

We prove a theorem
for prime \rc{3}{2}{} graphs to find a prime \rc{3}{3}{} pivot-minor with one less number of vertices.
We will define pivot-minors and pivot-equivalence in Section~\ref{sec:def}.

\begin{MAIN}
  If   $G$ is a prime \rc{3}{2}{} graph
  with at least $10$ vertices, then
  $G$ has a prime \rc{3}{3}{} pivot-minor $H$
  such that $\abs{V(H)}=\abs{V(G)}-1$.
\end{MAIN}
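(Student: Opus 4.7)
The plan is to argue by contradiction. Assume no such pivot-minor exists, so for every vertex $v\in V(G)$ and every graph $G'$ pivot-equivalent to $G$, the graph $G'\setminus v$ admits an obstructing cut $(X,V(G)\setminus\{v\}\setminus X)$ of one of two types: (a) $\rho_{G'\setminus v}(X)\le 1$ with both sides of size at least $2$, violating primeness, or (b) $\rho_{G'\setminus v}(X)\le 2$ with both sides of size at least $6$, violating \rc{3}{3}ness.

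Since pivoting preserves the cut-rank function on $V(G)$, and vertex deletion decreases the cut-rank of a set by at most $1$, any such obstructing cut in $G'\setminus v$ lifts to a cut of $G$ of cut-rank at most one greater. Combined with the hypothesis that $G$ is prime and \rc{3}{2}, obstruction (a) corresponds to a cut of $G$ of cut-rank exactly $2$ with small side of size in $\{2,3,4\}$, while obstruction (b) corresponds to a cut of $G$ of cut-rank exactly $3$ with both sides of size at least $6$. In particular, obstruction (b) is vacuous when $|V(G)|\le 12$, so for $|V(G)|\in\{10,11,12\}$ it suffices to avoid (a) alone.

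The vertices trapped by obstruction (a) all lie in small sides of rank-$2$ cuts of $G$, a bounded set, so many candidates $v$ avoid (a). The main obstacle is obstruction (b). A rank-$3$ cut $(X,V\setminus X)$ of $G$ with $|X|,|V\setminus X|\ge 6$ obstructs deletion of $v$ only if $v$ lies in a side of size at least $7$ and the cut-rank drops upon removing $v$, which occurs exactly when the row or column associated with $v$ in the cut matrix is linearly dependent on the others. Pivoting $G$ to a different $G'$ before deletion can alter this dependency relation while preserving the cut-rank function on $V(G)$, providing the freedom needed to arrange non-dependence. Submodularity of the cut-rank function constrains the structure of these large rank-$3$ cuts, limiting the number of simultaneous constraints on any given $v$, and the hypothesis $|V(G)|\ge 10$ provides the combinatorial room to locate a pair $(v,G')$ resolving all obstructions at once.

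The technical heart of the argument will be verifying that a single pivot choice simultaneously resolves every relevant rank-$3$ cut through the chosen vertex $v$, while also dodging obstruction (a). I expect this to require a careful case analysis of how the small rank-$2$ cuts of $G$ (a laminar or near-laminar family by submodularity) interact with the large rank-$3$ cuts, together with a pivot lemma guaranteeing that the row or column of $v$ in a targeted cut matrix can be brought out of the span of the remaining rows or columns by an appropriate edge pivot incident to $v$.
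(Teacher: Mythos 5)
Your proposal is a high-level plan rather than a proof, and it misses the structural decomposition that makes the theorem tractable. Let me point to the main gaps.

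First, the claim at the end of your second paragraph---that for $|V(G)|\in\{10,11,12\}$ it suffices to avoid obstruction~(a)---correctly notes that the \rc{3}{3}-obstruction requires $|V(G)|\ge 13$, but ``avoiding (a)'' is itself not trivial. The paper handles this by Allys' chain theorem (Theorem~\ref{thm:allys}) for general primes, by Lemma~\ref{lem:4prime} when $G$ is $3$-rank-connected, or by Proposition~\ref{prop:fully} when $G$ contains a fully closed set. Your sketch does not identify which of these tools to invoke; in particular, your statement that ``the vertices trapped by obstruction (a) all lie in small sides of rank-$2$ cuts of $G$'' is true but not, by itself, enough to produce a surviving vertex---the small rank-$2$ sets can cover $V(G)$ and interact in non-laminar ways (crossing is entirely possible; the cut-rank function over $\mathrm{GF}(2)$ is submodular but far from laminar-inducing). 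The paper's Proposition~\ref{prop:fully} works precisely by chaining twins inside a fully closed set, a genuinely combinatorial argument, not a counting one.

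Second, and more seriously, for obstruction (b) the only generic pivot lemma available is the Bixby-type inequality (Lemma~\ref{lem:bixbyineq} and Proposition~\ref{prop:bixby}), and for a \rc{3}{2} graph it yields only that $G\setminus v$ or $G/v$ is \rc{3}{6}, not \rc{3}{3}. The ``pivot lemma guaranteeing the $v$-row can be brought out of the span'' that you ask for does not exist in the strength you need: a pivot is a global operation and can fix one targeted cut's dependency while simultaneously creating rank drops across other cuts. Getting from \rc{3}{6} down to \rc{3}{3} is exactly where all the work lives. The paper does this not by resolving ``all relevant rank-$3$ cuts through $v$ at once'' with one clever pivot, but by a three-way case split: $3$-rank-connected, prime \rc{3}{1} but not $3$-rank-connected, and prime \rc{3}{2} but not \rc{3}{1}. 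The first and third cases are finished quickly (Lemmas~\ref{lem:4prime} and~\ref{lemma:4prime4}). In the middle case there is a set $T$ of size $3$ with $\rho_G(T)=2$; after pivoting (Lemma~\ref{lem:triplet}) $T$ becomes a ``triplet'' and one proves (Proposition~\ref{prop:internally4prime}), following Hall's argument for internally $4$-connected matroids, that at least one of $G\setminus a,G\setminus b,G\setminus c$ is prime \rc{3}{2}, or at least two of them are prime \rc{3}{3}. This requires a long analysis of ``barriers'' via submodularity (Lemmas~\ref{lem:sepequal}--\ref{lem:extreme2}). None of that structure appears in your plan, and I do not see how your direct contradiction argument would discover it. In short: the contradiction framing and use of submodularity are pointed in the right direction, but the proposal lacks the triplet/barrier machinery that does the actual heavy lifting, and the generic pivot-based bound you would obtain without it is too weak.
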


For one of its applications,
we present a result on the size of excluded pivot-minors for graphs of rank-width $k$.
Rank-width is a width parameter of graphs introduced by Oum and
Seymour~\cite{OS2004}.
An \emph{excluded pivot-minor} for graphs of rank-width at most $k$
is a pivot-minor-minimal graph having rank-width larger than $k$.
As a corollary of Theorem~\ref{thm:main},
we prove the following in Section~\ref{sec:rwd}.
\begin{itemize}
  \item Every excluded pivot-minor
        for graphs of rank-width at most $k$
        has at most  $(\slantfrac{7}{12}\cdot 6^{k+1}-1)/5$ vertices for $k\ge 2$.
  \item Every excluded pivot-minor for graphs of rank-width at most $2$
        has at most $16$ vertices.
\end{itemize}
These are improvements of a previous result~\cite{Oum2004},
showing that
such graphs
have at most $(6^{k+1}-1)/5$ vertices.
For graphs of rank-width $1$, the list of excluded pivot-minors and
vertex-minors are known exactly, see Oum~\cite{Oum2016}, but the list is not known for graphs
of rank-width $2$.
Now it may be possible to search all graphs up to $16$ vertices to
determine the list of excluded pivot-minors for the class of graphs of rank-width at most $2$.

There are other chain theorems known in other contexts.
One of the  well-known chain theorems is
Tutte's Wheels and Whirls Theorem for matroids.
Allys~\cite{Allys1994}
proved the following chain theorem on $2$-rank-connected graphs,
which can be considered as a generalization of Tutte's Wheels and Whirls Theorem in a certain sense.
(See Geelen
\cite{Geelen1995} for an alternative proof.)
\begin{theorem}[{Allys~\cite[Theorem 4.3]{Allys1994}}]\label{thm:allys}
  Let $G$ be a prime graph with at least $5$ vertices.
  Then $G$ has a prime pivot-minor $H$ such that $\abs{V(H)}=\abs{V(G)}-1$,
  unless
  $G$ is pivot-equivalent to a cycle.
\end{theorem}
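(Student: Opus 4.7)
The plan is to prove the contrapositive: if $G$ is a prime graph with $\abs{V(G)} \geq 5$ and no pivot-equivalent graph $G'$ has a vertex $v$ with $G' - v$ prime, then $G$ is pivot-equivalent to a cycle. Two background facts drive the argument. First, pivoting on an edge corresponds to performing elementary row operations on the $\mathrm{GF}(2)$ adjacency matrix, so the cut-rank function is pivot-invariant; in particular, primeness is preserved under pivot-equivalence, and I may freely replace $G$ by any pivot-equivalent graph. Second, a prime graph on at least $4$ vertices is $2$-connected, so $G - v$ is always connected, and the only obstruction to primeness of $G - v$ is a non-trivial split $(A, B)$ with $\abs{A}, \abs{B} \geq 2$ and $\rho_{G - v}(A) \leq 1$.

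Under the standing assumption, for each pivot-equivalent $G'$ and each $v \in V(G')$ I fix such a split $(A_v, B_v)$ of $G' - v$. Submodularity together with the primeness of $G'$ (which gives $\rho_{G'}(A_v) \geq 2$ and $\rho_{G'}(A_v \cup \{v\}) \geq 2$) forces $\rho_{G' - v}(A_v) = 1$ and also that the row of $v$ in the $\{v\} \times A_v$ cut-matrix is independent of the rank-one row space coming from $B_v$, and symmetrically on the other side. I then minimise $\abs{A_v}$ over all pivot-equivalent $G'$, all vertices $v$, and all such splits. The standard uncrossing lemma for cut-rank at most $1$ sides says that two properly crossing rank-$1$ sides can be uncrossed into strictly smaller rank-$1$ sides, which would contradict minimality; hence $A_v$ is atomic in the sense that no proper subset of $A_v$ of size $\geq 2$ is itself a cut-rank-$\leq 1$ side of a split of $G' - v$. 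The rank-$1$ condition also forces the vertices of $A_v$ to partition into at most two neighborhood-classes with respect to $B_v$.

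Pivots inside $A_v$ or along an edge between $v$ and $A_v$ can now be used to put the local structure into a small number of canonical forms, essentially an independent set with a common neighborhood in $B_v$, a single edge whose endpoints are twins, or a short chain. In each of these canonical forms I aim to exhibit a vertex $u$ of some $G''$ pivot-equivalent to $G$ with $G'' - u$ prime: roughly, one pivots through $v$ to effectively swap $v$ with a vertex of $A_v$, then deletes a leaf of the canonical structure. Carrying this out requires careful bookkeeping, since pivoting can introduce new splits elsewhere; the atomicity of $A_v$ and cut-rank invariance under pivots must together be used to show no such secondary split survives.

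The surviving configurations — those in which no such pivot-then-delete succeeds — should be exactly the ones with $\abs{A_v} = 2$, $A_v$ consisting of a single edge pendant at a single neighbor of $v$, and $v$ of degree $2$: the local pattern of a cycle. A final global step assembles these cyclic local patterns, one at each vertex, into a consistent cyclic order on $V(G)$; using pivot invariance of cut-rank and a direct calculation of the pivot-equivalence class of $C_n$, this forces $G$ to be pivot-equivalent to $C_{\abs{V(G)}}$. The main obstacle is the interplay between uncrossing and pivoting in the middle two steps: uncrossing is a purely submodular tool, whereas pivoting can create or destroy splits, so the two must be interleaved so that each minimality reduction is preserved by the chosen pivot. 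A finite check on the prime graphs with $5$ or $6$ vertices should handle the base cases and ensure no sporadic exception leaks through.
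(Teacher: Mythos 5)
First, note that the paper does not prove Theorem~\ref{thm:allys} at all: it is quoted from Allys~\cite{Allys1994} (with an alternative proof by Geelen~\cite{Geelen1995}), both of which are substantial arguments, so there is no in-paper proof to compare yours against. Your proposal therefore has to stand on its own, and as written it is a plan rather than a proof. The preparatory observations are fine: pivot-invariance of the cut-rank function (Lemma~\ref{lem:pivotcutrank}) lets you pass freely between pivot-equivalent graphs, $G\setminus v$ is connected by Proposition~\ref{prop:rankconn}, primeness of $G$ forces $\rho_{G\setminus v}(A_v)=1$ for any split side $A_v$, and minimising $\abs{A_v}$ does give atomicity (indeed you do not even need uncrossing for that, since any proper subset of $A_v$ of size at least $2$ automatically has a large complement).

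The genuine gap is that everything after this point --- which is the entire content of the theorem --- is asserted rather than argued. The reduction to ``a small number of canonical forms'' is not carried out; the claim that in each canonical form a pivot-then-delete produces a prime graph is introduced with ``I aim to exhibit'' and never verified; you yourself flag that pivoting can create new splits elsewhere and that controlling this is ``the main obstacle,'' without resolving it; and the decisive classification --- that the only surviving local configuration is $\abs{A_v}=2$ with $v$ of degree $2$ --- is stated with ``should be exactly.'' That classification is precisely the statement that the cycle is the unique exception, so leaving it unproved leaves the theorem unproved. The final global step is also incomplete: the local cycle pattern is extracted at a minimising pair $(G',v)$, and you would need every vertex (possibly in different pivot-equivalent representatives) to exhibit it simultaneously before you can read off a consistent cyclic order; reconciling these different representatives is nontrivial and is not addressed. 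To turn this into a proof you would need to actually enumerate the structures of a minimal rank-$1$ side (two neighbourhood classes over $B_v$, plus the attachment of $v$), perform the explicit pivots, and rule out secondary splits case by case --- essentially the work that Allys and Geelen do. As it stands, the proposal identifies a plausible route but does not close any of the steps where the theorem could fail.
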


Our goal is to explore  chain theorems to higher rank connectivity.
Our main theorem is motived by the following theorem on
internally $4$-connected matroids by Hall~\cite{Hall2005}.
For the definitions on matroids, we refer to the book of Oxley~\cite{Oxley1992}.
\begin{theorem}[{Hall~\cite[Theorem 3.1]{Hall2005}}]\label{thm:hall}
  Let $M$ be an internally $4$-connected matroid, and let
  $\{a,b,c\}$ be a triangle of $M$. Then at least one of the following hold.
  \begin{enumerate}[(1)]
    \item At least one of $M\setminus a$, $M\setminus b$ and $M\setminus c$ is
          $4$-connected up to $3$-separators of size $4$.
    \item At least two of $M\setminus a$, $M\setminus b$ and $M\setminus c$ are
          $4$-connected up to $3$-separators of size $5$.
  \end{enumerate}
\end{theorem}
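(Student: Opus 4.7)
The plan is a proof by contradiction. Suppose neither (1) nor (2) holds. Then each of $M\setminus a$, $M\setminus b$, $M\setminus c$ admits a $3$-separation whose smaller side has size at least $5$, and at least two of the three deletions admit a $3$-separation whose smaller side has size at least $6$. For each $x\in\{a,b,c\}$ I would fix such a witnessing $3$-separation $(X_x, Y_x)$ of $M\setminus x$, labelled so that when a size bound of $\ge 6$ applies it applies to $(X_a,Y_a)$ and $(X_b,Y_b)$.

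The first step is to show that in every $(X_x, Y_x)$ the two triangle elements other than $x$ must lie on opposite sides. If instead both lay in $X_x$, then because $\{a,b,c\}$ is a triangle the element $x$ would be in the matroid closure of $X_x$, and, combined with $r(M)=r(M\setminus x)$ (since $x$ is in a circuit of $M$), a direct rank computation yields $\lambda_M(X_x\cup\{x\}) = \lambda_{M\setminus x}(X_x) \le 2$. But then $(X_x\cup\{x\}, Y_x)$ would be a $3$-separation of $M$ with both sides of size at least $5$, contradicting internal $4$-connectivity of $M$. The same argument handles both elements lying in $Y_x$.

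Next I would compare the two bad $3$-separations whose witness has both sides of size at least $6$, namely $(X_a,Y_a)$ and $(X_b,Y_b)$, and uncross them. After relabelling we may take $b\in X_a$, $c\in Y_a$, $a\in X_b$, and $c\in Y_b$, so that each of the four quadrants $X_a\cap X_b$, $X_a\cap Y_b$, $Y_a\cap X_b$, $Y_a\cap Y_b$ receives at least one designated element. Applying submodularity of the connectivity function, $\lambda_M(S\cap T)+\lambda_M(S\cup T)\le \lambda_M(S)+\lambda_M(T)$, to $S=X_a\cup\{a\}$ and $T=X_b\cup\{b\}$, together with the location of $a,b,c$ in the four quadrants, one should extract a $3$-separation of $M$ whose smaller side has size strictly greater than $3$, again contradicting internal $4$-connectivity.

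The main obstacle is this uncrossing step. The difficulty is that when neither of $a,b$ lies in the closure of the side of its own partition opposite to the other two triangle elements, the partitions $(X_a\cup\{a\},Y_a)$ and $(X_b\cup\{b\},Y_b)$ are only weak $3$-separations of $M$ (with $\lambda_M\le 3$ rather than $\le 2$), so one cannot feed them directly into submodularity and expect a genuine $3$-separation to fall out. A careful case split based on whether each of $a$ and $b$ lies in the closure of the appropriate quadrant is required, and the dichotomy in the theorem -- ``size $\le 4$'' for one deletion versus ``size $\le 5$'' for two -- emerges precisely by tracking how many of the two compared partitions contribute a closure-correction in the uncrossing inequality. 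The gap between the hypotheses ``$\ge 5$'' and ``$\ge 6$'' is exactly the arithmetic slack needed to absorb these one-element corrections and close out the contradiction in each sub-case.
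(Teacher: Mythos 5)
Your opening moves are sound: the negation of (1) and (2) is set up correctly, and your argument that the two remaining triangle elements must lie on opposite sides of any witnessing $3$-separation of $M\setminus x$ (via $x\in\operatorname{cl}(X_x)$, $r(M\setminus x)=r(M)$, and hence $\lambda_M(X_x\cup\{x\})=\lambda_{M\setminus x}(X_x)\le 2$) is exactly the analogue of Lemma~\ref{lem:sep}. Note, however, that the paper does not prove Theorem~\ref{thm:hall} at all --- it is quoted from Hall --- so the relevant benchmark is the paper's proof of the graph analogue, Proposition~\ref{prop:internally4prime}, which occupies Sections~\ref{sec:triplet} and~\ref{sec:internally4prime} and roughly ten lemmas.

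The genuine gap is in your uncrossing step, and it is not only the ``weak $3$-separation'' issue you flag. Uncrossing just the two separations $(X_a,Y_a)$ and $(X_b,Y_b)$ cannot yield a contradiction: what submodularity extracts from two crossing separations is only that certain quadrants of $E(M)-\{a,b,c\}$ are small (this is the content of the analogous Lemma~\ref{lem:sep2}: roughly, at most one quadrant in each ``diagonal'' pair can have more than three elements). Two large quadrants sharing a side can coexist, so all of $\lvert X_a\rvert,\lvert Y_a\rvert,\lvert X_b\rvert,\lvert Y_b\rvert$ may remain as large as you like and no contradiction with internal $4$-connectivity of $M$ falls out at this stage. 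A working proof must (a) bring in the third separation $(X_c,Y_c)$ and analyze the triple intersections, (b) choose all three separations extremal --- maximizing the smaller side --- since the conclusion that some $M\setminus x$ is $4$-connected up to $3$-separators of size $5$ is obtained by bounding the \emph{chosen} extremal separation, and (c) run a substantial case analysis on the sizes of those intersections (the analogues of Lemmas~\ref{lem:abba} through~\ref{lem:extreme2}). The $4$-versus-$5$ dichotomy in the statement does not emerge from counting closure corrections in a single two-way uncrossing; it emerges from this global bookkeeping across all three deletions. As written, your sketch correctly names the obstacle but does not overcome it.
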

The above theorem has been used later in several papers~\cite{GZ2006,GZ2008} to prove stronger theorems for internally $4$-connected matroids.

Why is it possible to find theorems on the rank connectivity of graphs
analogous to those on the matroid connectivity?
We do not need matroids for the proofs in the paper, but there are interesting connections between
bipartite graphs and binary matroids, fully discussed in~\cite{Oum2004}.
Let $M$ be the binary matroid  on a finite
set $E=X\cup Y$ with a binary representation
\[
  \bordermatrix{   & X & Y\cr
    X &
    \begin{matrix}
      1                      \\
       & 1                   \\
       &   & \ddots          \\
       &   &        & 1\quad
    \end{matrix}
    \vline
    & \qquad A \qquad
  }.\]
Then in the matroid $M$, the \emph{rank} $r_M(S)$ of a set $S$ is the dimension of the vector
space spanned by column vectors indexed by $S$.
The \emph{connectivity function} $\lambda_M(S)$  of $M$ is
defined as
$\lambda_M(S)=r_M(S)+r_M(E-S)-r_M(E)$
for all $S\subseteq E$.
A matroid $M$ is \emph{$n$-connected} if for all integers $k<n$,
$\min(\abs{X}, \abs{E(M)-X})<k$
whenever $\lambda_M(X)<k$.
The \emph{fundamental graph} $G$ of $M$
is a bipartite graph with the bipartition $(X,Y)$
such that  $i$ in $X$ is adjacent to a vertex $j$ in $B$
if and only if the $(i,j)$-entry of $A$ is non-zero.
Because  $\rho_G(S)=\lambda_M(S)$ for all $S\subseteq E$ (see Oum~\cite{Oum2004}),
the binary matroid $M$ is $k$-connected if and only if
its fundamental graph $G$ is $k$-rank-connected.
Minors of binary matroids and pivot-minors of $G$ are also related~\cite{Oum2004}.
We also remark that internally $4$-connected binary matroids correspond to
prime \rc{3}{1} bipartite graphs.
Thus it is natural to consider generalizations of theorems on binary matroids regarding the matroid connectivity to non-bipartite graphs with the rank connectivity.

Here is an overview of the paper. In Section~\ref{sec:def}, we discuss
submodularity and graph pivot-minors.
Section~\ref{sec:gen2} proves a useful proposition to generate
prime pivot-minors.
Section~\ref{sec:main} states our main theorem
and its proof for two easy cases.
Sections~\ref{sec:triplet} and~\ref{sec:internally4prime} provide the
proof of the remaining cases of the main theorem.
Section~\ref{sec:rwd} discusses an application to the problem of bounding the size of excluded
pivot-minors for the class of graphs of rank-width at most $2$.

\section{Preliminaries: Submodularity and Pivoting}
\label{sec:def}
For an $X\times Y$ matrix $M$
and
subsets $U$ of $X$ and $V$ of $Y$,
we write $M[U,V]$ to denote the $U\times V$
submatrix of $M$. The following proposition is well known; for instance, see~\cite[Proposition  2.1.9]{Murota2000},
\cite[Lemma 2.3.11]{Truemper1992}, or~\cite{Truemper1985}.
\begin{proposition}\label{prop:submodular}
  Let $M$ be an $X\times Y$ matrix.
  Let $X_1,X_2\subseteq X$ and $Y_1,Y_2\subseteq Y$.
  Then
  \[\rank(M[X_1,Y_1])+\rank(M[X_2,Y_2])
    \ge \rank(M[X_1\cap X_2,Y_1\cup Y_2])
    +\rank(M[X_1\cup X_2,Y_1\cap Y_2]).\]
\end{proposition}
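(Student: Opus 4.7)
The plan is a rank-preserving block-matrix manipulation that exhibits both sides of the inequality inside a single matrix.

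First, I form the block-diagonal matrix $\tilde M=M[X_1,Y_1]\oplus M[X_2,Y_2]$ indexed by the disjoint unions $X_1\sqcup X_2$ and $Y_1\sqcup Y_2$, so that $\rank\tilde M=\rank M[X_1,Y_1]+\rank M[X_2,Y_2]$, matching the left-hand side of the claimed inequality. I then identify the duplicate copies of $X_1\cap X_2$ and $Y_1\cap Y_2$ by invertible elementary operations: for each $x\in X_1\cap X_2$, add the second-copy row indexed by $x$ to the first-copy row indexed by $x$; and for each $y\in Y_1\cap Y_2$, subtract the second-copy column indexed by $y$ from the first-copy column indexed by $y$. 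The resulting matrix $\tilde M'$ has the same rank as $\tilde M$.

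After reordering rows and columns, $\tilde M'$ takes block upper-triangular form $\begin{pmatrix} A & B\\ 0 & C\end{pmatrix}$, where the second row block consists of the first-copy $X_1\cap X_2$ rows and the first column block consists of the first-copy $Y_1\cap Y_2$ columns. A direct computation shows that, up to sign changes of certain rows, $A$ equals $M[X_1\cup X_2,\,Y_1\cap Y_2]$ and $C$ equals $M[X_1\cap X_2,\,Y_1\cup Y_2]$. Intuitively, the row operation brings a copy of $M[X_1\cap X_2,\,Y_1\cup Y_2]$ into the first-copy $X_1\cap X_2$ rows (giving $C$), the column operation brings a copy of $M[X_1\cup X_2,\,Y_1\cap Y_2]$ into the first-copy $Y_1\cap Y_2$ columns (giving $A$), and the two operations together annihilate the $(X_1\cap X_2)\times(Y_1\cap Y_2)$ intersection block at the first-copy position.

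Since a block upper-triangular matrix has rank at least the sum of the ranks of its diagonal blocks, $\rank\tilde M'\ge\rank A+\rank C$, and combining this with $\rank\tilde M'=\rank\tilde M$ yields the stated inequality. The main obstacle is the bookkeeping in the middle step: one must track how the row and column operations act on each of the nine pieces of $\tilde M$ indexed by the refinements of $X_1\sqcup X_2$ and $Y_1\sqcup Y_2$ into three parts (``only in $1$,'' ``in both,'' ``only in $2$''), and check in particular that the combined operations zero out the first-copy $(X_1\cap X_2)\times(Y_1\cap Y_2)$ block so that the block upper-triangular form really appears.
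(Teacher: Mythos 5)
Your proof is correct. The paper itself gives no proof of Proposition~\ref{prop:submodular}; it is stated as well known with citations to Murota, Truemper, and others, so there is no in-paper argument to compare against. Your block-matrix construction is a complete and standard proof (it is essentially the argument found in Truemper's treatment of the linking/bimatroid rank function). I verified the bookkeeping you flag as the main obstacle: after adding the second-copy row to the first-copy row for each $x\in X_1\cap X_2$, that row reads $(M[x,Y_1]\mid M[x,Y_2])$ across the two column blocks, and then subtracting the second-copy column from the first-copy column for each $y\in Y_1\cap Y_2$ gives $M[x,y]-M[x,y]=0$ there, while the first-copy rows from $X_1-X_2$ retain $M[x,y]$ and the second-copy rows acquire $-M[x,y]$ in those columns. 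So the diagonal blocks are indeed $M[X_1\cup X_2, Y_1\cap Y_2]$ (up to negating the $X_2$ rows, which is harmless for rank and vacuous over $\mathrm{GF}(2)$, the field used throughout the paper) and $M[X_1\cap X_2, Y_1\cup Y_2]$, and the lower-left block vanishes. Combined with the facts that elementary operations preserve rank and that a block upper-triangular matrix has rank at least the sum of the ranks of its diagonal blocks, the inequality follows.
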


All graphs in this paper are simple.
For a graph $G=(V,E)$,
Let $\A_G$ be the adjacency matrix of a graph $G$  over the binary
field $\mathrm{GF}(2)$, that is a $V\times V$ matrix such that the
$(i,j)$ entry is $1$ if and only if $i$ is adjacent to $j$ in $G$.
For subsets $X$ and $Y$ of $V$,
Let $\rho_G(X,Y)=\rank(\A_G[X,Y])$.
The \emph{cut-rank function} $\rho_G(X)$ is $\rho_G(X,V(G)-
  X)$.
We will sometimes write $\rho$ for $\rho_G$ if it is not ambiguous.
A partition $(A,B)$ of the vertex set of a graph $G$
is called a \emph{split}
if $\rho_G(A)\le 1$
and $\min(\abs{A},\abs{B})\ge 2$.

By Proposition~\ref{prop:submodular}, one can easily show the following
two lemmas, which will be very useful later.

\begin{lemma}\label{lem:gagb}
  Let $G$ be a graph and let $a$, $b$ be distinct vertices.
  Let $A\subseteq V(G)- \{a\}$ and $B\subseteq V(G)-
    \{b\}$.
  If $b\notin A$ and $a\notin B$,
  then
  \begin{equation}
    \label{eq:gagb1}
    \rho_G(A\cap B)+\rho_{G\setminus a\setminus b}(A\cup B)\le
    \rho_{G\setminus a}(A)+\rho_{G\setminus b}(B).
  \end{equation}
  If $b\in A$ and $a\notin B$,
  then
  \begin{equation}
    \label{eq:gagb2}
    \rho_{G\setminus b}(A\cap B)+\rho_{G\setminus a}(A\cup B)\le
    \rho_{G\setminus a}(A)+\rho_{G\setminus b}(B).
  \end{equation}
  If $b\in A$ and $a\in B$,
  then
  \begin{equation}
    \label{eq:gagb3}
    \rho_{G\setminus a\setminus b}(A\cap B)+\rho_{G}(A\cup B)\le
    \rho_{G\setminus a}(A)+\rho_{G\setminus b}(B).
  \end{equation}
\end{lemma}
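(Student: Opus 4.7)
The plan is to derive all three inequalities uniformly as instances of Proposition~\ref{prop:submodular}, applied to the adjacency matrix $\A_G$, with a single choice of rows and columns that adapts to the three cases by a small bookkeeping argument. In each case I will take
\[
  X_1 = A,\quad Y_1 = V(G)-\{a\}-A,\quad X_2 = B,\quad Y_2 = V(G)-\{b\}-B.
\]
Since deleting a vertex $v$ from $G$ merely removes the $v$-row and $v$-column of $\A_G$, this gives $\rank(\A_G[X_1,Y_1])=\rho_{G\setminus a}(A)$ and $\rank(\A_G[X_2,Y_2])=\rho_{G\setminus b}(B)$, which are exactly the two terms on the right-hand side of each target inequality.

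To identify the terms that Proposition~\ref{prop:submodular} produces on the left, I note $X_1\cap X_2=A\cap B$, $X_1\cup X_2=A\cup B$, and, by De~Morgan,
\[
  Y_1\cup Y_2 \;=\; V(G) - \bigl((A\cup\{a\})\cap(B\cup\{b\})\bigr),\qquad Y_1\cap Y_2 \;=\; V(G) - \{a,b\} - (A\cup B).
\]
A brief case analysis then shows that $(A\cup\{a\})\cap(B\cup\{b\})$ equals $A\cap B$, $(A\cap B)\cup\{b\}$, or $(A\cap B)\cup\{a,b\}$ in Cases~1, 2, 3 respectively, so $\rank(\A_G[X_1\cap X_2,\,Y_1\cup Y_2])$ becomes $\rho_G(A\cap B)$, $\rho_{G\setminus b}(A\cap B)$, or $\rho_{G\setminus a\setminus b}(A\cap B)$. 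Dually, $\{a,b\}\cap(A\cup B)$ is $\emptyset$, $\{b\}$, or $\{a,b\}$ in the three cases, so $\rank(\A_G[X_1\cup X_2,\,Y_1\cap Y_2])$ becomes $\rho_{G\setminus a\setminus b}(A\cup B)$, $\rho_{G\setminus a}(A\cup B)$, or $\rho_G(A\cup B)$. Substituting into Proposition~\ref{prop:submodular} then yields the three inequalities \eqref{eq:gagb1}, \eqref{eq:gagb2}, and \eqref{eq:gagb3}.

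I do not anticipate any real obstacle here; the lemma is essentially a bookkeeping exercise, and the only care needed is to track how the two special indices $a$ and $b$ migrate under the intersections and unions of the column sets $Y_1$ and $Y_2$. The case split in the statement is exactly designed to handle this migration, so choosing $X_i$, $Y_i$ as above and invoking Proposition~\ref{prop:submodular} once suffices.
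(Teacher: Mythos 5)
Your proof is correct: the single application of Proposition~\ref{prop:submodular} to $\A_G$ with $X_1=A$, $Y_1=V(G)-\{a\}-A$, $X_2=B$, $Y_2=V(G)-\{b\}-B$ does produce exactly the right four terms, and your tracking of where $a$ and $b$ land under the intersections and unions is accurate in all three cases. This is precisely the argument the paper intends (it omits the proof, stating only that the lemma follows easily from Proposition~\ref{prop:submodular}), so there is nothing to add.
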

\begin{lemma}\label{lem:gga}
  Let $G$ be a graph and let $x$ be a vertex of $G$.
  Let $X,Y\subseteq V(G)-\{x\}$.
  Then
  \begin{align}\label{eq:gga1}
    \rho_{G\setminus x}(X\cap Y)+\rho_G(X\cup Y\cup \{x\})
     & \le \rho_{G\setminus x}(X)+\rho_G(Y\cup \{x\}), \\
     \label{eq:gga2}
    \rho_{G}(X\cap Y)+\rho_{G\setminus x}(X\cup Y)
     & \le \rho_{G\setminus x}(X)+\rho_G(Y).
  \end{align}
\end{lemma}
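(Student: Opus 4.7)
The plan is to derive both inequalities directly from Proposition~\ref{prop:submodular} applied to the adjacency matrix $\A_G$, by choosing the row subsets $X_1,X_2\subseteq V(G)$ and column subsets $Y_1,Y_2\subseteq V(G)$ so that the four submatrices on the two sides of the submodular inequality are exactly the four submatrices whose ranks appear in~(\ref{eq:gga1}) and~(\ref{eq:gga2}) respectively. The key observation that drives the bookkeeping is that for $S\subseteq V(G)-\{x\}$ one has $\rho_{G\setminus x}(S)=\rank(\A_G[S,V(G)-S-\{x\}])$ while $\rho_G(S)=\rank(\A_G[S,V(G)-S])$, so the choice of whether to include $x$ in a column set controls whether the corresponding rank is taken in $G$ or in $G\setminus x$.

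For~(\ref{eq:gga1}), I would take
\[
X_1=X,\qquad X_2=Y\cup\{x\},\qquad Y_1=V(G)-X-\{x\},\qquad Y_2=V(G)-Y-\{x\}.
\]
Since $x\notin X$, one checks $X_1\cap X_2=X\cap Y$, $X_1\cup X_2=X\cup Y\cup\{x\}$, $Y_1\cap Y_2=V(G)-X-Y-\{x\}$, and $Y_1\cup Y_2=V(G)-(X\cap Y)-\{x\}$. Reading off each of the four ranks from $\A_G$, Proposition~\ref{prop:submodular} becomes exactly
\[
\rho_{G\setminus x}(X)+\rho_G(Y\cup\{x\})\ \ge\ \rho_{G\setminus x}(X\cap Y)+\rho_G(X\cup Y\cup\{x\}),
\]
which is~(\ref{eq:gga1}).

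For~(\ref{eq:gga2}), the same strategy works with the tweak of omitting $x$ from $X_2$ and keeping it in $Y_2$:
\[
X_1=X,\qquad X_2=Y,\qquad Y_1=V(G)-X-\{x\},\qquad Y_2=V(G)-Y.
\]
Now $X_1\cap X_2=X\cap Y$ and $X_1\cup X_2=X\cup Y$ as before, while $Y_1\cap Y_2=V(G)-X-Y-\{x\}$ and, because $x\in Y_2$ compensates for its absence in $Y_1$, one gets $Y_1\cup Y_2=V(G)-(X\cap Y)$. The four ranks are then $\rho_{G\setminus x}(X)$, $\rho_G(Y)$, $\rho_G(X\cap Y)$, and $\rho_{G\setminus x}(X\cup Y)$, yielding~(\ref{eq:gga2}) at once.

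The only thing to be careful about is the bookkeeping of whether $x$ does or does not appear in the column complements after taking unions and intersections; this is why the two inequalities require slightly different choices (with $x$ inserted into one row set for~(\ref{eq:gga1}) but not for~(\ref{eq:gga2})). There is no real obstacle beyond this bookkeeping: once the row and column sets are set up correctly, Proposition~\ref{prop:submodular} gives each inequality immediately, in the same spirit as the derivation of Lemma~\ref{lem:gagb}.
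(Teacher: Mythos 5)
Your proof is correct, and it is exactly the argument the paper intends: the paper omits the details, stating only that the lemma follows from Proposition~\ref{prop:submodular}, and your choices of row and column sets (with $x$ placed in the row set $X_2$ for~(\ref{eq:gga1}) but left in the column set $Y_2$ for~(\ref{eq:gga2})) realize that derivation precisely.
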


We defined \rc{k}{\ell}{} graphs in Section~\ref{sec:intro}.
\begin{proposition}\label{prop:rankconn}
  Let $k$ be a positive integer.
  Let $G=(V,E)$ be a \rc{k}{0}{} graph with at least $2k$ vertices.
  Let $v\in V$.
  Then both $G$ and $G\setminus v$ are \rc{(k-1)}{0}{}.

  Consequently if $G$ is $k$-rank-connected and $\abs{V}\ge 2k$,
  then $G\setminus v$ is $(k-1)$-rank-connected for all $v\in V$.
\end{proposition}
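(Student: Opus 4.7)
My plan is to argue both assertions by contradiction, in each case using the hypothesis $\abs{V}\ge 2k$ to enlarge a putative low-rank separator into one that violates $k^{+0}$-rank-connectedness of $G$ itself. The workhorse observations are (i) the cut-rank of a set can only grow by at most $1$ when one more row is appended to the submatrix, and (ii) deleting a vertex $v$ removes exactly one column from the relevant submatrix, so $\rho_G(X)\le\rho_{G\setminus v}(X)+1$ for every $X\subseteq V-\{v\}$.

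For $G$ itself, I would suppose that some $X\subseteq V$ has $\rho_G(X)<k-1$ but $\min(\abs{X},\abs{V-X})\ge k-1$, and by symmetry take $\abs{X}\le\abs{V-X}$. If $\abs{X}\ge k$, then $X$ already contradicts $k^{+0}$-rank-connectedness. Otherwise $\abs{X}=k-1$, and since $\abs{V}\ge 2k$ we have $\abs{V-X}\ge k+1$; picking any $a\in V-X$ and setting $Y=X\cup\{a\}$ gives $\abs{Y}=k$, $\abs{V-Y}\ge k$, and $\rho_G(Y)\le\rho_G(X)+1<k$ by (i), a contradiction. For $G\setminus v$, I would assume $X\subseteq V-\{v\}$ with $\rho_{G\setminus v}(X)<k-1$ and $\min(\abs{X},\abs{V-\{v\}-X})\ge k-1$. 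By (ii) we have $\rho_G(X)<k$. If $\abs{X}\ge k$, then $\abs{V-X}\ge k$ and $X$ contradicts $k^{+0}$-rank-connectedness of $G$. Otherwise $\abs{X}=k-1$, so $\abs{V-\{v\}-X}\ge k$ by the size hypothesis; the set $Y=X\cup\{v\}$ then satisfies $\abs{Y}=k$ and $\abs{V-Y}=\abs{V-\{v\}-X}\ge k$, while (i) applied to the submatrix $\A_G[Y,V-Y]$, obtained by appending the $v$-row to $\A_{G\setminus v}[X,V-\{v\}-X]$, gives $\rho_G(Y)\le\rho_{G\setminus v}(X)+1<k$, again contradicting $k^{+0}$-rank-connectedness.

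The ``consequently'' clause is then immediate: if $G$ is $k$-rank-connected and $\abs{V}\ge 2k$, then for every integer $m\le k$ the graph $G$ is $m^{+0}$-rank-connected with $\abs{V}\ge 2m$, so by the first part $G\setminus v$ is $(m-1)^{+0}$-rank-connected, whence $G\setminus v$ is $(k-1)$-rank-connected. I do not anticipate a real obstacle: the argument reduces to separating the boundary case $\abs{X}=k-1$ from $\abs{X}\ge k$, and the hypothesis $\abs{V}\ge 2k$ is invoked precisely to guarantee that after adjoining one vertex to a set of size $k-1$ at least $k$ vertices remain on the other side. No submodularity beyond the trivial rank-grows-by-at-most-one bound is needed.
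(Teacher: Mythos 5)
Your proof is correct and takes essentially the same elementary approach as the paper, relying on the rank-grows-by-at-most-one observation and the hypothesis $\abs{V}\ge 2k$ to rule out the boundary case $\abs{X}=k-1$. The only organizational difference is that the paper proves the claim for $G\setminus v$ first and then derives the claim for $G$ by deleting a vertex from the larger side, whereas you handle $G$ and $G\setminus v$ by two parallel self-contained arguments.
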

\begin{proof}
  Let us first prove that $G\setminus v$ is \rc{(k-1)}{0}{}
  for each $v\in V$.
  Let $(X,Y)$ be a partition of $V-\{v\}$ such that
  $\abs{X},\abs{Y}\ge k-1$ and
  $\rho_{G\setminus v}(X)<k-1$.
  We deduce that $\abs{X}<k$ because $\rho(X,Y\cup\{v\})<k$.
  Similarly $\abs{Y}<k$ because $\rho(X\cup\{v\},Y)<k$.
  Then $\abs{V}=\abs{X}+\abs{Y}+1<2k$, a contradiction.

  Now let us assume that  $G$ is not \rc{(k-1)}{0}.
  Suppose that $(X,Y)$ be a partition of $V$ such that
  $\abs{X}, \abs{Y}\ge k-1$ and $\rho_G(X)<k-1$.
  We may assume that $\abs{X}\le \abs{Y}$.
  Since $\abs{V}\ge 2k$, $\abs{Y}\ge k$.
  We pick $v\in Y$. Then $\abs{Y-\{v\}}\ge k-1$ and $\rho_{G\setminus
      v}(X)<k-1$; a contradiction because $G\setminus v$ is
  \rc{(k-1)}{0}.
\end{proof}
For a vertex $v$ of a graph $G$,
let $N(v)$ be the set of neighbors of $v$
and let $N[v]=N(v)\cup \{v\}$.

\begin{figure}\label{fig:pivot}
  \tikzstyle{v}=[circle, draw, solid, fill=black, inner sep=0pt, minimum width=2pt]
  \tikzstyle{w}=[circle, draw, solid, fill=black, inner sep=0pt, minimum width=4pt]

  \centering
  \begin{tikzpicture}
    \begin{scope}[xshift=-3cm]
      \node [v,label=$x$] at (-2,2) (x) {} ;
      \node [v,label=$y$] at (2,2) (y) {} ;
      \draw (x)-- (0,1.8) node [w] (c1) {} --(y);
      \draw (x)-- (0,1.5) node [w] (c2) {} --(y);
      \draw (x)-- (-1,1) node [w] (l1) {} ;
      \draw (x)--(-1,0.5) node [w] (l2) {};
      \draw (y)--(1,1) node [w] (r1){};
      \draw (y)--(1,0.5) node [w] (r2){};
      \draw (l1)--(0,0) node[v] (z){} -- (r1);
      \draw (l2)--(z)--(c2);
      \draw (x) [out=10,in=170] to (y);
      \draw (l1)--(l2); 
      \draw [thick] (l1)--(c1)--(l2)--(c2)--(r1)--(l1);
      \draw [thick] (l2)--(r2);
      \draw (0,-.5) node {$G$};
    \end{scope}
    \begin{scope}[xshift=3cm]
      \node [v,label=$y$] at (-2,2) (x) {} ;
      \node [v,label=$x$] at (2,2) (y) {} ;
      \draw (x)-- (0,1.8) node [w] (c1) {} --(y);
      \draw (x)-- (0,1.5) node [w] (c2) {} --(y);
      \draw (x)-- (-1,1) node [w] (l1) {} ;
      \draw (x)--(-1,0.5) node [w] (l2) {};
      \draw (y)--(1,1) node [w] (r1){};
      \draw (y)--(1,0.5) node [w] (r2){};
      \draw (l1)--(0,0) node[v] (z){} -- (r1);
      \draw (l2)--(z)--(c2);
      \draw (x) [out=10,in=170] to (y);
      \draw (l1)--(l2); 
      \draw [thick] (l1)--(c2);
      \draw [thick] (l1)--(r2)--(c1)--(r1);
      \draw [thick] (l2)--(r1);
      \draw [thick] (c2)--(r1);
      \draw [thick] (c2)--(l1);
      \draw (0,-.5)node {$G\pivot xy$};
    \end{scope}
  \end{tikzpicture}
  \caption[]{An illustration of pivoting an edge $xy$ in a graph $G$.}
\end{figure}
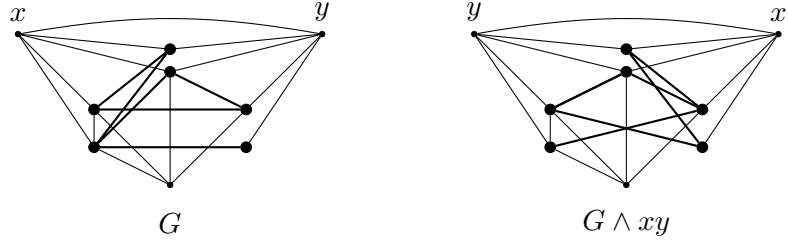

\emph{Pivoting} an edge $vw$ of a graph $G$ is an operation to 
obtain a new graph denoted by $G\pivot vw$ from $G$ by the following procedures:
\begin{enumerate}[(i)]
  \item For each $(x,y)\in (N(v)-N(w)-\{w\})\times (N(w)-N(v)-\{v\})$, 
  $(x,y)\in (N(v)-N(w)-\{w\})\times (N(w)\cap N(v))$, or $(x,y)\in (N(v)\cap N(w))\times (N(w)-N(v)-\{v\})$, we remove the edge $xy$ if $x$, $y$ are adjacent in $G$ and we add a new edge $xy$ if $x$, $y$ are non-adjacent.
  \item Swap the labels of $v$ and $w$; in other words, make $N_G(w)$ the set of neighbors of $v$ in $G\pivot vw$, and make $N_G(v)$ the set of neighbors of $w$ in $G\pivot vw$.
\end{enumerate}
See Figure~\ref{fig:pivot} for an illustration.

A graph $H$ is \emph{pivot-equivalent} to $G$
if $H$ is obtained from $G$ by a sequence of pivots.
A graph $H$ is a \emph{pivot-minor} of $G$
if $H$ is an induced subgraph of a graph
pivot-equivalent to $G$.
Pivots preserve the cut-rank functions.
\begin{lemma}[see~{\cite[Proposition 2.6]{Oum2004}}]\label{lem:pivotcutrank}
  If two graphs $G$ and $H$ are pivot-equivalent, then 
  $\rho_G(X)=\rho_H(X)$ for all sets $X$ of vertices.
\end{lemma}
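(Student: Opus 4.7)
My plan is to reduce to a single edge pivot, identify the pivoted adjacency matrix as a principal pivot transform (PPT) of the original, and then conclude rank invariance via a kernel count.

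Since pivot-equivalence is realized by a finite sequence of edge pivots, an easy induction on the length of the sequence reduces the lemma to the case $H = G\pivot vw$ for a single edge $vw$. Fix such $vw$ and any $X \subseteq V(G)$. Partition $V(G) - \{v,w\}$ into $A = N_G(v) - N_G[w]$, $B = N_G(v) \cap N_G(w)$, $C = N_G(w) - N_G[v]$, and the remainder $R$, and order the rows and columns of $\A_G$ as $(v, w, A, B, C, R)$. The $\{v,w\}$-principal submatrix of $\A_G$ is $P = \bigl(\begin{smallmatrix}0 & 1\\1 & 0\end{smallmatrix}\bigr)$, which is invertible over $\mathrm{GF}(2)$. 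Writing $\A_G = \bigl(\begin{smallmatrix}P & Q \\ Q^T & S\end{smallmatrix}\bigr)$, the principal pivot transform of $\A_G$ with respect to $\{v,w\}$ is $M' = \bigl(\begin{smallmatrix}P & PQ \\ Q^TP & S + Q^TPQ\end{smallmatrix}\bigr)$ (using $P^{-1} = P$ and $-1 = 1$ over $\mathrm{GF}(2)$).

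The first main step is a block-by-block verification of the identity $\A_H = M'$. The block $PQ$ swaps the two rows of $Q$, exchanging the neighborhoods of $v$ and $w$, which exactly encodes the label swap of $v$ and $w$ in step (ii) of the definition of pivoting. A direct computation shows that $Q^T P Q$ is the all-ones matrix in the $A$-$B$, $A$-$C$, and $B$-$C$ blocks and zero in all other blocks, so adding it to $S$ implements precisely the toggling prescribed by step (i). This verification, although routine, is the main obstacle of the proof: the bookkeeping must be carried out carefully so that the label swap of step (ii) matches up with the $P$-factors appearing in $M'$.

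For the second main step, rank invariance of the PPT, I would prove it via a kernel count. The kernel of $\A_G[X, V(G)-X]$ is canonically isomorphic to $K = \{y \in \mathrm{GF}(2)^{V(G)} : y|_X = 0 \text{ and } (\A_G y)|_X = 0\}$ via extension by zero on $X$. The defining property of the PPT is that, with $S = \{v,w\}$, $\A_G y = x$ if and only if $\A_H u = u'$ where $u = (x|_S,\, y|_{V(G)-S})$ and $u' = (y|_S,\, x|_{V(G)-S})$. If $y \in K$ then $y|_X = 0$ and $x|_X = 0$, so at every $j \in X$ both $u_j$ and $u'_j$ equal one of $x_j$ or $y_j$, each of which vanishes; hence $u|_X = 0$ and $u'|_X = 0$, i.e., $u \in K'$, the analogous kernel for $\A_H$. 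This gives a linear bijection $K \to K'$, so $\dim K = \dim K'$, and since the two matrices have equally many columns, equal nullities yield $\rho_G(X) = \rho_H(X)$.
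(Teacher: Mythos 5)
Your proof is correct. Note that the paper itself offers no proof of this lemma; it simply cites \cite[Proposition~2.6]{Oum2004}. The argument there goes a different route: pivoting is first expressed via local complementations (or the off-diagonal submatrix $\A_H[X,V(G)-X]$ is related to $\A_G[X,V(G)-X]$ by explicit elementary row and column operations coming from the Schur complement with respect to the invertible $\{v,w\}$-principal block), after which rank invariance is immediate. Your route instead identifies $\A_{G\pivot vw}$ as the principal pivot transform of $\A_G$ at $\{v,w\}$ and then uses Tucker's exchange identity $\A_G y=x \iff \A_H u=u'$ to match up kernels of the two off-diagonal submatrices. Both steps of your argument check out: the block computation of $Q^TPQ$ does produce exactly the toggling in step (i) of the definition (your choice $A=N_G(v)-N_G[w]$, $C=N_G(w)-N_G[v]$ correctly excises $w$ and $v$), and the map $y\mapsto\bigl((\A_G y)|_{\{v,w\}},\,y|_{V(G)-\{v,w\}}\bigr)$ is a linear automorphism of $\mathrm{GF}(2)^{V(G)}$ (invertibility of $P$ lets you recover $y|_{\{v,w\}}$), so it restricts to the bijection $K\to K'$ you assert; alternatively, surjectivity follows because pivoting $vw$ again returns $G$, giving the reverse inclusion by symmetry. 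The kernel-exchange formulation is arguably the cleaner way to see why \emph{every} cut-rank, not just the full rank, is preserved, whereas the row-operation proof is shorter once one already has the local-complementation machinery. Either way the statement is established; your proof is self-contained where the paper's is outsourced.
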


It is easy to verify that  if $y_1,y_2\in N(x)$, then
$G\pivot xy_1=(G\pivot xy_2)\pivot y_1y_2$; see~\cite[Proposition
  2.5]{Oum2004}.
Since  $(G\pivot xy_1)\setminus x$ is pivot-equivalent to
$(G\pivot xy_2)\setminus x$,
we often write $G/x$ to denote $(G\pivot xy)\setminus x$
for some neighbor $y$ of
$x$, when there is no confusion. (If $x$ has no neighbors, we let
$G/x=G\setminus x$.)
We have to pay attention to cause no confusions; for instance,  if
$y,z\in N(x)$, then
$G/x\setminus y$ is somewhat ambiguous
because
$(G\pivot xy)\setminus x\setminus y$
and $(G\pivot xz)\setminus x\setminus y$
are not always pivot-equivalent.

The following lemma is in~\cite{Oum2004}.
\begin{lemma}[{\cite[Lemma 4.4]{Oum2004}}]\label{lem:bixbyineq}
  Let $G$ be a graph. Let $v$ be a vertex in $G$.
  Let $(X_1,X_2)$ and $(Y_1,Y_2)$ be partitions of $V(G)-
    \{v\}$. Then
  \[
    \rho_{G\setminus v}(X_1)+\rho_{G/v}(Y_1)
    \ge
    \rho_{G}(X_1\cap Y_1)
    +\rho_G(X_2\cap Y_2)-1.
  \]
\end{lemma}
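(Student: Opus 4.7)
The plan is to apply Proposition~\ref{prop:submodular} (submodularity of matrix rank) to the adjacency matrix $\A_G$, after first re-expressing $\rho_{G/v}(Y_1)$ as the rank of an explicit submatrix of $\A_G$.

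Set $N=\A_G[Y_1\cup\{v\},Y_2\cup\{v\}]$. The key intermediate identity I would prove is
\[
\rank(N)\;=\;\rho_{G/v}(Y_1)+\rho_G(\{v\}).
\]
If $v$ is isolated, the $v$-row and $v$-column of $\A_G$ vanish and $G/v=G\setminus v$, so both sides equal $\rank(\A_G[Y_1,Y_2])$. Otherwise, by the symmetries $\rho_{G/v}(Y_1)=\rho_{G/v}(Y_2)$ and $\rank(\A_G[Y_1\cup\{v\},Y_2\cup\{v\}])=\rank(\A_G[Y_2\cup\{v\},Y_1\cup\{v\}])$, I may assume some neighbor $y$ of $v$ lies in $Y_1$, so that $N[y,v]=1$. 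Pivoting on this invertible $1\times1$ block via the Schur complement gives $\rank(N)=1+\rank(S)$, where $S$ has row set $(Y_1\setminus\{y\})\cup\{v\}$ and column set $Y_2$, with entries $S[i,j]=\A_G[i,j]+\A_G[i,v]\,\A_G[y,j]$ for $i\in Y_1\setminus\{y\}$ and $S[v,j]=\A_G[v,j]$. Unpacking the definition of $G\pivot vy$ (edge-toggle rule followed by the label swap of $v$ and $y$) shows that $\A_{G/v}[y,j]=\A_G[v,j]$ for $j\in Y_2$ and $\A_{G/v}[i,j]=\A_G[i,j]+\A_G[i,v]\,\A_G[y,j]+\A_G[y,i]\,\A_G[v,j]$ for $i\in Y_1\setminus\{y\}$, $j\in Y_2$. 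Adding $\A_G[y,i]$ times the $v$-row of $S$ to each row $i\in Y_1\setminus\{y\}$, and then relabelling the $v$-row as the $y$-row, converts $S$ into $\A_{G/v}[Y_1,Y_2]$; hence $\rank(S)=\rho_{G/v}(Y_1)$, completing the identity.

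With the identity in hand, apply Proposition~\ref{prop:submodular} to $\A_G$ with row-subsets $X_1$ and $Y_1\cup\{v\}$ and column-subsets $X_2$ and $Y_2\cup\{v\}$. The intersections and unions simplify to $X_1\cap(Y_1\cup\{v\})=X_1\cap Y_1$, $X_2\cap(Y_2\cup\{v\})=X_2\cap Y_2$, $X_1\cup(Y_1\cup\{v\})=V(G)\setminus(X_2\cap Y_2)$, and $X_2\cup(Y_2\cup\{v\})=V(G)\setminus(X_1\cap Y_1)$, so Proposition~\ref{prop:submodular} together with $\rho_G(S)=\rho_G(V(G)\setminus S)$ yields
\[
\rho_{G\setminus v}(X_1)+\rank(N)\;\ge\;\rho_G(X_1\cap Y_1)+\rho_G(X_2\cap Y_2).
\]
Substituting the identity and using $\rho_G(\{v\})\le 1$ produces the desired inequality.

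The main obstacle is the identity in the first step. The verification is notational rather than conceptual, but one must carefully track the label swap built into pivoting and the symmetry reduction placing a chosen neighbor $y$ of $v$ into $Y_1$; once the Schur complement matrix $S$ and the adjacency matrix $\A_{G/v}[Y_1,Y_2]$ are both written out explicitly, the row-equivalence that finishes the identity is a single elementary row-operation.
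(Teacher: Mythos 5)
Your proof is correct and follows the same skeleton as the paper's: establish that $\rank(\A_G[Y_1\cup\{v\},Y_2\cup\{v\}])=\rho_{G/v}(Y_1)+1$ for non-isolated $v$, then apply Proposition~\ref{prop:submodular} to the row sets $X_1$, $Y_1\cup\{v\}$ and column sets $X_2$, $Y_2\cup\{v\}$, handling the isolated case separately. The only difference is that the paper simply cites this rank identity from \cite[Proposition 4.3]{Oum2004}, whereas you supply a self-contained Schur-complement proof of it (which checks out, including the symmetry reduction placing a neighbor of $v$ in $Y_1$ and the row operations identifying the Schur complement with $\A_{G/v}[Y_1,Y_2]$).
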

\begin{proof}
  In~\cite[Proposition 4.3]{Oum2004}, it is shown that
  $\rho_{G/v}(Y_1)=\rho_G(Y_1\cup \{v\}, Y_2\cup\{v\})-1$
  if $v$ has at least one neighbor.
  So if $v$ is not an isolated vertex, then we obtain the desired
  inequality from Proposition~\ref{prop:submodular}.
  If $v$ is an isolated vertex, then $G/v=G\setminus v$
  and therefore
  $\rho_{G\setminus v}(X_1)+\rho_{G\setminus v}(Y_1)
    \ge \rho_{G\setminus v}(X_1\cap Y_1)+
    \rho_{G \setminus v}(X_2\cap Y_2)
    >\rho_{G}(X_1\cap Y_1)+
    \rho_G(X_2\cup Y_2)-1$ by Proposition~\ref{prop:submodular}.
\end{proof}
By Lemma~\ref{lem:bixbyineq}, we deduce the following,
motivated by Bixby~\cite[Theorem 1]{Bixby1982}.
\begin{proposition}\label{prop:bixby}
  Let $G$ be a \rc{k}{\ell}{} graph
  and let $v$ be a vertex in $G$.
  Then $G\setminus v$ or $G/v$ is \rc{k}{(2\ell+k-1)}.
\end{proposition}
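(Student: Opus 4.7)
The plan is to argue by contradiction: suppose that neither $G\setminus v$ nor $G/v$ is \rc{k}{(2\ell+k-1)}. Then there exist partitions $(X_1,X_2)$ and $(Y_1,Y_2)$ of $V(G)-\{v\}$ with $\rho_{G\setminus v}(X_1)<k$, $\rho_{G/v}(Y_1)<k$, and such that each of $\abs{X_1}$, $\abs{X_2}$, $\abs{Y_1}$, $\abs{Y_2}$ is at least $k+(2\ell+k-1)=2k+2\ell-1$.

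The key step is to invoke Lemma~\ref{lem:bixbyineq} twice: once with the partitions $(X_1,X_2)$ and $(Y_1,Y_2)$, and once with $(X_1,X_2)$ and $(Y_2,Y_1)$. Since the cut-rank function is invariant under complementation, $\rho_{G/v}(Y_2)=\rho_{G/v}(Y_1)<k$, so both applications yield
\[
  \rho_G(X_1\cap Y_1)+\rho_G(X_2\cap Y_2)\le 2k-1 \quad\text{and}\quad \rho_G(X_1\cap Y_2)+\rho_G(X_2\cap Y_1)\le 2k-1.
\]
Each inequality therefore contains some summand of value at most $k-1$, producing two cells of the form $X_i\cap Y_j$ with cut-rank in $G$ less than $k$.

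Next I feed each such small-rank cell into the \rc{k}{\ell}ness of $G$: either the cell itself or its complement has fewer than $k+\ell$ vertices. The complement of $X_i\cap Y_j$ contains $\{v\}$ together with $Y_{3-j}$ (which is disjoint from $Y_j$), so its size is at least $1+(2k+2\ell-1)\ge k+\ell$. Thus the cell is the small side, and $\abs{X_i\cap Y_j}\le k+\ell-1$.

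To finish, pick one small-rank cell from each of the two displayed inequalities. A direct enumeration of the four possible pairings shows that the two chosen cells always share a common $X$-part or a common $Y$-part (for instance, $X_1\cap Y_1$ and $X_1\cap Y_2$ share $X_1$, while $X_1\cap Y_1$ and $X_2\cap Y_1$ share $Y_1$, and so on). In that shared part, summing the two cell sizes gives $\abs{X_i}\le 2(k+\ell-1)=2k+2\ell-2$ (or the analogous bound for $\abs{Y_j}$), contradicting the lower bound $2k+2\ell-1$. The only obstacle is bookkeeping in this last case analysis; the substantive inputs are just Lemma~\ref{lem:bixbyineq} and the symmetry of the cut-rank function, plus the elementary observation that the complement of any cell is automatically large because it retains $v$ and an entire opposite part.
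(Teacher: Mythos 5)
Your proof is correct and follows essentially the same route as the paper: both apply Lemma~\ref{lem:bixbyineq} to the pairs $(X_1,X_2),(Y_1,Y_2)$ and $(X_1,X_2),(Y_2,Y_1)$, use the \rc{k}{\ell}ness of $G$ to bound each low-cut-rank cell $X_i\cap Y_j$ by $k+\ell-1$, and derive a contradiction by summing two such cells that share a part. The only difference is organizational: the paper first normalizes so that $\abs{X_1\cap Y_1}\ge k+\ell$ (forcing $X_2\cap Y_2$ to be the small cell), whereas you handle the four pairings symmetrically.
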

\begin{proof}
  Suppose that both $G\setminus v$ and $G/v$ are not
  \rc{k}{(2\ell+k-1)}.
  There exist partitions $(X_1,X_2)$ and $(Y_1,Y_2)$ of $V(G)-
    \{v\}$ such that
  $\rho_{G\setminus v}(X_1)<k$, $\rho_{G/v}(Y_1)<k$
  and $\abs{X_1},\abs{X_2},\abs{Y_1},\abs{Y_2}\ge 2k+2\ell-1$.
  By swapping $Y_1$ and $Y_2$ if necessary,
  we may assume that $\abs{X_1\cap Y_1}\ge k+\ell$.
  Since $G$ is \rc{k}{\ell}{}, $\rho_G(X_1\cap Y_1)\ge k$.
  By Lemma~\ref{lem:bixbyineq},
  $\rho_{G}(X_2\cap Y_2)<k$.
  Since $G$ is \rc{k}{\ell}, $\abs{X_2\cap Y_2}<k+\ell$.

  By applying Lemma~\ref{lem:bixbyineq} with $(X_1,X_2)$ and
  $(Y_2,Y_1)$,
  we deduce that $\rho_G(X_1\cap Y_2)<k$ or
  $\rho_G(X_2\cap Y_1)<k$.
  If $\rho_G(X_1\cap Y_2)<k$, then $\abs{X_1\cap Y_2}<k+\ell$, and therefore
  $\abs{Y_2}=\abs{X_1\cap Y_2}+\abs{X_2\cap Y_2}<2k+2\ell-1$.
  Similarly, if $\rho_G(X_2\cap Y_1)<k$, then $\abs{X_2\cap Y_1}<k+\ell$, and therefore
  $\abs{X_2}<2k+2\ell-1$. This is a contradiction.
\end{proof}

\section{Generating prime pivot-minors}
\label{sec:gen2}
A set $X$ of vertices of a graph $G$ is \emph{fully closed}
if $\rho_G(X)=2$, $\abs{X}>2$, and 
$\rho_G(X\cup\{v\})>\rho_G(X)$
for all $v\in V(G)-X$.
Our goal in this section is to prove the following proposition.
\begin{proposition}\label{prop:fully}
  Let $G=(V,E)$ be a prime graph with $\abs{V}\ge 8$.
  If $G$ has a fully closed set $A$, 
  then $A$ has  a vertex $v$ such that 
  $G\setminus v$ or $G/v$ is
  prime.
\end{proposition}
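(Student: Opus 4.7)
The plan is to argue by contradiction: suppose that for every $v\in A$, both $G\setminus v$ and $G/v$ are non-prime. Since $G$ is prime, it is \rc{2}{0}{}, so Proposition~\ref{prop:bixby} applied with $k=2$ and $\ell=0$ tells us that for each $v\in A$ at least one of $G\setminus v$ or $G/v$ is \rc{2}{1}{}; call such a graph $G_v^*$. Because $G_v^*$ has at least $7$ vertices, is \rc{2}{1}{}, and is assumed non-prime, any split $(X,Y)$ of $G_v^*$ must satisfy $\min(\abs{X},\abs{Y})=2$. Hence for every $v\in A$ we obtain a pair $T_v\subseteq V-\{v\}$ with $\abs{T_v}=2$ and $\rho_{G_v^*}(T_v)\le 1$.

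Next I would exploit the fully closed structure of $A$. The identity $\rho_G(A)=2$ means the rows of $\A_G[A,V-A]$ lie in a $2$-dimensional $\mathrm{GF}(2)$-space, so the vertices of $A$ partition into at most four classes by their external neighborhoods, and symmetrically $V-A$ partitions by the column patterns. Full-closedness forces the neighborhood into $A$ of any $w\in V-A$ to be genuinely new (adding $w$ raises $\rho_G$ to at least $3$), while primeness of $G$ forbids any two vertices from sharing their neighborhood in $V$ minus those two, which tightly constrains adjacencies inside each class and across classes.

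The heart of the argument is to pull each inequality $\rho_{G_v^*}(T_v)\le 1$ back into $G$. When $G_v^*=G\setminus v$ this is accomplished by \eqref{eq:gga2}, taking $X=T_v$ and a suitable companion set $Y$ aligned with the neighborhood classes of $A$; when $G_v^*=G/v$ one uses Lemma~\ref{lem:bixbyineq} instead. In each case we obtain a set $S\subseteq V$ whose cut-rank in $G$ is dangerously close to the threshold imposed by primeness. A pigeonhole over the external-neighborhood classes of $A$ then yields two vertices $v_1,v_2\in A$ whose associated $T_{v_1},T_{v_2}$ are compatible, and uncrossing them via Lemma~\ref{lem:gagb} produces either a split of $G$ (contradicting primeness) or a proper superset of $A$ with cut-rank still $2$ (contradicting full-closedness of $A$).

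The main obstacle I foresee is the pivot case: Lemma~\ref{lem:bixbyineq} introduces an extra $-1$ that is harder to absorb, and the relative position of $T_v$ with respect to $A$ — fully inside $A-\{v\}$, fully outside, or straddling — must be analyzed separately, since each interacts differently with the external-neighborhood partition. The hypothesis $\abs{V}\ge 8$ should enter precisely when counting vertices across the four classes of $A$ to guarantee the pigeonhole and uncrossing steps deliver a genuine contradiction.
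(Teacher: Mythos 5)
Your proposal is a plan rather than a proof, and the parts you leave unspecified are exactly where the difficulty lies. The setup is fine: by Proposition~\ref{prop:bixby} one of $G\setminus v$, $G/v$ is \rc{2}{1}{} for each $v\in A$, and a split of a \rc{2}{1}{} graph on $\ge 7$ vertices has a side of size exactly $2$, so each bad vertex yields a pair $T_v$ of cut-rank $\le 1$. (Also, the ``pivot case'' you flag as the main obstacle is a non-issue: since the conclusion only asks for $G\setminus v$ \emph{or} $G/v$ to be prime and pivoting preserves all cut-ranks, one may simply replace $G$ by $G\pivot vw$ and always work with deletion.) But the heart of your argument --- ``a pigeonhole over the external-neighborhood classes of $A$ yields two vertices whose pairs are compatible, and uncrossing them produces a split of $G$ or a larger set of cut-rank $2$'' --- is not carried out, ``compatible'' is never defined, and there is good reason to doubt that a single uncrossing of two pairs can work. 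The paper's own cautionary example (a long cycle, where every vertex is bad) shows that the obstruction to primeness after one deletion can be a long path-like chain of consecutive low-rank pairs; defeating it requires an unbounded iterative argument, not one application of submodularity.

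What the paper actually does is: first show (using full-closedness, via Lemmas~\ref{lem:triplet1side} and~\ref{lem:pick}) that every bad pair $T_v$ can be taken \emph{inside} $A-\{v\}$; then build a maximal sequence $v_0,v_1,\dots,v_k\in A$ with $\rho_{G\setminus v_i}(\{v_{i-1},v_{i+1}\})\le 1$ and each $G\setminus v_i$ \rc{2}{1}{}, observe that $v_{i-1}$ and $v_{i+1}$ are twins in $G\setminus v_i$, and run a parity argument on external neighborhoods to show the new endpoint $v_{k+1}$ is always a genuinely new vertex of $A$ admitting a pivot that extends the sequence --- contradicting maximality. None of this chain/twins/parity machinery appears in your sketch, and your pigeonhole over the $\le 4$ row-classes of $\A_G[A,V-A]$ does not substitute for it (two vertices of $A$ with the same external neighborhood need not be twins, since they may differ inside $A$, and the pairs $T_{v_1},T_{v_2}$ need not interact in a way that Lemma~\ref{lem:gagb} can exploit). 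As written, the proposal does not constitute a proof.
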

We can not omit the condition that $A$ is fully closed; for instance, if $G$ is a cycle of length at least $6$,
then neither $G\setminus v$ nor $G/v$  is prime for a vertex~$v$ of $G$.

Let us discuss how to prove Proposition~\ref{prop:fully}.
By Proposition~\ref{prop:bixby},
for each vertex $v$ of $G$, $G\setminus v$ or $G/v$ is \rc{2}{1}.
The rest of this section is devoted to find a vertex $v$ so that
$G\setminus v$ or $G/v$ is prime.

\begin{lemma}\label{lem:intprime}
  Let $G=(V,E)$ be a prime  graph with $\abs{V}\ge 7$
  and let $a\in V$.
  Suppose that $G\setminus a$ is \rc{2}{1}{}
  and let $b$, $c$ be distinct vertices of $G\setminus a$
  such that $\rho_{G\setminus a}(\{b,c\})\le 1$.
  Suppose that
  $G\setminus b$ is \rc{2}{1}{} but not prime.
  \begin{enumerate}[(i)]
    \item
          If $c$ has a neighbor other than $a$ and $b$, then
          $G\setminus a\setminus b$ is prime.
    \item
          If $\abs{V}>7$ and $c$ is adjacent to only $a$ and $b$, then
          $G\setminus c$ is prime.
    \item
          If $\abs{V}=7$, $c$ is adjacent to only $a$ and $b$, and $G\setminus
            c$ is not prime, then
          $\abs{A}=\abs{B}=3$ for every split $(A,B)$ of $G\setminus c$,
          and
          $(G\pivot ac)\setminus a\setminus c$ is prime.
  \end{enumerate}
\end{lemma}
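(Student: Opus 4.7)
\emph{Plan.} The approach is to prove each of (i), (ii), (iii) by contradiction. In each case I would assume that the named pivot-minor has a split $(P,Q)$ with $\abs P,\abs Q\ge 2$ and derive a contradiction from the hypotheses using the submodularity inequalities of Lemmas~\ref{lem:gagb} and~\ref{lem:gga}. A preliminary observation used throughout: primeness of $G$ (with $\abs V\ge 7$) gives $\rho_G(\{b,c\})\ge 2$, so combined with $\rho_{G\setminus a}(\{b,c\})\le 1$ we must have $\rho_G(\{b,c\})=2$ and $\rho_{G\setminus a}(\{b,c\})=1$; structurally, $b$ and $c$ agree, modulo rank~$1$, on their neighbourhoods in $V\setminus\{a,b,c\}$, and the column indexed by $a$ supplies the extra rank in $G$.

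For part (i), let $(P,Q)$ be a split of $G\setminus a\setminus b$ with $c\in P$ (after relabelling). The assumption that $c$ has a neighbour other than $a$ and $b$ gives $\rho_{G\setminus a}(\{c\})=1$. Submodularity in $G\setminus a$ applied to $P$ and $\{b,c\}$, using $P\cap\{b,c\}=\{c\}$ and $P\cup\{b,c\}=P\cup\{b\}$, yields $\rho_{G\setminus a}(Q)=\rho_{G\setminus a}(P\cup\{b\})\le\rho_{G\setminus a}(P)\le 2$. The \rc{2}{1} property of $G\setminus a$ forces $\abs P=2$ when $\rho_{G\setminus a}(P)\le 1$, in which case it also forces $\abs Q=2$ and hence $\abs V\le 6$, a contradiction. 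The remaining case $\rho_{G\setminus a}(P)=\rho_{G\setminus a}(Q)=2$ is excluded by a further submodularity argument in $G\setminus b$, using its \rc{2}{1} property, the guaranteed size-$2$ split of $G\setminus b$ coming from non-primeness, and the twin-structure of $\{b,c\}$; combined via Lemma~\ref{lem:gagb}, these again compress $\abs P+\abs Q$ to force $\abs V\le 6$.

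For part (ii), primeness of $G$ first forces $N_G(c)=\{a,b\}$: otherwise one of $(\{a,c\},V\setminus\{a,c\})$ or $(\{b,c\},V\setminus\{b,c\})$ has cut-rank at most~$1$ with both sides of size at least~$2$, a split. Suppose $(P,Q)$ splits $G\setminus c$. If $\{a,b\}$ lies on one side, adding $c$ to that side does not change the cut-rank (since $c$ has no neighbours on the other side), giving a split of $G$. Otherwise $a\in P$ and $b\in Q$, and Lemma~\ref{lem:gga} with $x=c$ combined with $\abs V>7$ produces either a split of $G$ or a violation of the \rc{2}{1} property of $G\setminus a$. For part (iii), at $\abs V=7$ the same analysis rules out $\abs A=2$ splits of $G\setminus c$, forcing $\abs A=\abs B=3$ whenever $G\setminus c$ is not prime. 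To prove primeness of $(G\pivot ac)\setminus a\setminus c$, I would use Lemma~\ref{lem:pivotcutrank} (pivots preserve cut-rank) to translate any hypothetical split of the pivot-minor back into cut-rank data of $G$, then combine with the $(3,3)$-split structure of $G\setminus c$ and the fact that $ac\in E(G)$ (which holds since $N_G(c)=\{a,b\}$ and $c$ must be adjacent to $a$) to reach the contradiction.

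The hard part will be the bookkeeping of cut-ranks across the many graphs in play ($G$, $G\setminus a$, $G\setminus b$, $G\setminus c$, $G\setminus a\setminus b$, $G\pivot ac$, and $(G\pivot ac)\setminus a\setminus c$), and choosing each application of submodularity so that the known small cut-ranks line up with the sizes of $P$ and $Q$ under the \rc{2}{1} thresholds. Part (iii) is expected to be the most delicate: after pivoting $ac$, the neighbourhoods of $a$ and $c$ are intermixed, so the size-$3$ sides of any $G\setminus c$-split must be reinterpreted inside $(G\pivot ac)\setminus a\setminus c$, and a careful case analysis on the location of $b$ and of the common external neighbours of $b$ and $c$ is required to close out primeness.
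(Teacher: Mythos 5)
Your plan has the right toolkit (contradiction via a hypothetical split, the submodularity lemmas, the \rc{2}{1} thresholds), but for part (i) it is missing the central mechanism of the paper's proof and the one concrete inequality you do write down is too weak to start it. Applying submodularity of the cut-rank function \emph{within} $G\setminus a$ to $P$ and $\{b,c\}$ only gives $\rho_{G\setminus a}(P\cup\{b\})\le \rho_{G\setminus a}(P)\le 2$, and the \rc{2}{1} condition says nothing about sets of cut-rank $2$; this is exactly why you are left with a ``remaining case'' that you then dispose of with an unspecified ``further submodularity argument.'' The working version uses the mixed inequality \eqref{eq:gga1} of Lemma~\ref{lem:gga} (deleting $b$ from $G\setminus a$), whose right-hand side carries $\rho_{G\setminus a\setminus b}(P)\le 1$ rather than $\rho_{G\setminus a}(P)$, so together with $\rho_{G\setminus a\setminus b}(\{c\})=1$ it yields $\rho_{G\setminus a}(P\cup\{b\})\le 1$ outright and forces $\abs{Q}=2$ with no case split. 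Even granting that, the proof is not finished: the real engine of (i) is to first pin down the split of $G\setminus b$ as $(X,\{a,d\})$ — one must rule out $a\in X$ via \eqref{eq:gagb3} and primeness of $G$, then use \eqref{eq:gagb2} and \rc{2}{1}ness of $G\setminus a$ to force the complementary side down to the two vertices $a,d$ — and then argue by cases on whether $d$ lies in $P$ or in $Q$. Neither the vertex $d$ nor this dichotomy appears in your plan, and your assertion that everything ``compresses to $\abs V\le 6$'' is not what happens: when $d\in P$ the contradiction is $\rho_{G\setminus a}(\{b,c,d\})\le 1$ against \rc{2}{1}ness of $G\setminus a$, not a cardinality bound.

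For part (ii) there is a smaller but genuine gap: to reach $\abs V\le 7$ you need \emph{two} size bounds, namely $\abs{A}\le 3$ from \rc{2}{1}ness of $G\setminus a$ (via $\rho_{G\setminus a}(B\cup\{c\})\le 1$) and $\abs{B}\le 3$ from \rc{2}{1}ness of $G\setminus b$ (via $\rho_{G\setminus b}(\{a,c\})=1$, which holds because $c$ has degree $2$). Your sketch invokes only $G\setminus a$, which leaves the other side of the split unbounded. Part (iii) is left at the level of a plan, but the paper itself omits that finite case check, so I will not press you on it; the $\abs A=\abs B=3$ claim does follow correctly from the two bounds above once $\abs V=7$.
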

\begin{proof}
  Let us first prove (i).
  Since   $G\setminus b$ is not prime,
  there
  exists a set $X$ such that  $\rho_{G\setminus b}(X)\le 1$ and $\abs{X}\ge 2$,
  $\abs{V}-1-\abs{X}\ge 2$.
  We may assume that $c\in X$.
  If $a\in X$, then
  By (\ref{eq:gagb3}) of Lemma~\ref{lem:gagb} with
  $\rho_{G\setminus a}(\{b,c\})=1$,
  $\rho_{G\setminus b}(X)\le 1$,
  we deduce that
  $\rho_{G}(X\cup\{b\})\le 1$; a contradiction because  $G$ is
  prime.
  So $a\notin X$. By (\ref{eq:gagb2}) of Lemma~\ref{lem:gagb},
  we deduce that $\rho_{G\setminus a}(X\cup\{b\})\le 1$.
  Since $G\setminus a$ is \rc{2}{1}{}, $\abs{V- X}=2$.
  Let $V- X=\{a,d\}$.
  So $\rho_{G\setminus b}(\{a,d\})\le 1$.

  To prove that $G\setminus a\setminus b$ is prime,
  let us assume on the contrary
  that $G\setminus a\setminus b$ has a split $(A,B)$.
  We may assume that $c\in A$.

  From \eqref{eq:gga1} of Lemma~\ref{lem:gga},
  we deduce that
  $\rho_{G\setminus a\setminus b}(\{c\})+\rho_{G\setminus
      a}(A\cup\{b\})\le \rho_{G\setminus a\setminus
      b}(A)+\rho_{G\setminus a}(\{b,c\})\le 2$.
  Since $c$ has a neighbor other than $a$ and $b$,
  we have $\rho_{G\setminus a}(A\cup \{b\})\le 1$.
  Since $G\setminus a$ is \rc{2}{1}{},
  we have $\abs{B}=2$.

  If $d\in A$, then
  by \eqref{eq:gagb3} of Lemma~\ref{lem:gagb},
  \[ \rho_{G\setminus a\setminus b}(\{d\})+
    \rho_G(A\cup\{a,b \})\le \rho_{G\setminus a}(A\cup\{b\})
    +\rho_{G\setminus b}(\{a,d\})\le 2.\] 
  Since $\abs{A}\ge 2$ and $\abs{B}\ge 2$, we have $\rho_{G}(A\cup\{a,b\})\ge
    2$ and therefore $\rho_{G\setminus a\setminus b}(\{d\})=0$. Then
  by (\ref{eq:gga1}) of Lemma~\ref{lem:gga},
  $
    \rho_{G\setminus a}(\{b,c,d\})\le \rho_{G\setminus a\setminus
      b}(\{d\})
    +\rho_{G\setminus a}(\{b,c\})\le 1$.
  This contradicts to the assumption that $G\setminus a$ is
  \rc{2}{1}.

  Therefore, we may assume that $d\notin A$.
  We know that $\rho_{G\setminus a}(B)=\rho_{G\setminus a}(A\cup
    \{b\})\le 1$.
  By (\ref{eq:gagb2}) of Lemma~\ref{lem:gagb},
  $\rho_{G\setminus a}(\{d\})+\rho_{G\setminus b}(B\cup\{a\})\le
    \rho_{G\setminus a}(B)+\rho_{G\setminus b}(\{a,d\})\le 2$.
  Since $\abs{B\cup \{a\}}\ge 3$ and $G\setminus b$ is
  \rc{2}{1},
  we deduce that $\abs{A}=2$. Then, $\abs{V}\le \abs{A}+\abs{B}+2\le 6$, a
  contradiction.
  This proves that if $c$ has a neighbor other than $a$ and~$b$,
  then $G\setminus a\setminus b$  is prime.

  To prove (ii) and (iii), let us assume that $c$ has no neighbors
  other than $a$ and $b$.
  Suppose that $G\setminus c$ is not prime.
  Let $A$, $B$ be subsets of $V(G\setminus c)$ such that
  $B=V- (A\cup\{c\})$, $\abs{A}\ge 2$, $\abs{B}\ge 2$, and
  $\rho_{G\setminus c}(A)\le 1$.
  We may assume that $a\in A$. Since $G$ is prime, $\rho_G(A\cup \{a\})\ge 2$
  and therefore $b\in B$.

  From \eqref{eq:gagb2} of Lemma~\ref{lem:gagb},
  $\rho_{G\setminus c}(\{b\})+\rho_{G\setminus a}(B\cup\{c\})
    \le \rho_{G\setminus a}(\{b,c\})+\rho_{G\setminus c}(B)\le 2$.
  We deduce that   $\rho_{G\setminus a}(B\cup
    \{c\})\le 1$.
  Since  $G\setminus a$ is  \rc{2}{1},
  $\abs{A-\{a\}}\le 2$ and therefore $\abs{A}\le 3$.

  We also know that $\rho_{G\setminus b}(\{a,c\})=1$ because the
  degree of $c$ is $2$.
  From \eqref{eq:gagb2} of Lemma~\ref{lem:gagb},
  $\rho_{G\setminus c}(\{a\})+\rho_{G\setminus b}(A\cup\{c\})
    \le \rho_{G\setminus b}(\{a,c\})+\rho_{G\setminus c}(A)\le 2$.
  We conclude that $\abs{B-\{b\}}\le 2$ because $G\setminus b$ is
  \rc{2}{1}.

  Then $\abs{V}= \abs{A}+\abs{B}+1\le 7$.
  So we may assume that $\abs{V}=7$, $G\setminus c$ is not prime, and
  every split $(A,B)$ of $G$ satisfies $\abs{A}=\abs{B}=3$.
  We omit the remaining finite case checking to see
  why $(G\pivot ac) \setminus a\setminus c$ is prime.
\end{proof}
We will show that reducing by one vertex is enough
when there is a good fully closed set.
\begin{lemma}\label{lem:triplet1side}
  Let $G=(V,E)$ be a prime graph with $\abs{V}\ge 4$.
  Let $A$ be  a  fully closed set and let $v\in A$.
  If  $X\subseteq V- \{v\}$, $\abs{X}=2$, and
  $\rho_{G\setminus v}(X)\le 1$,
  then $\abs{X\cap A}\neq 1$.
\end{lemma}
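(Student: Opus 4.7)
The plan is a short proof by contradiction driven by a single submodularity application. Suppose, for contradiction, that $X=\{x,y\}$ with $x\in A$ and $y\notin A$ (the assertion is symmetric in $x$ and $y$, so this covers the case $\abs{X\cap A}=1$). My goal is to show that $x$ has no neighbor in $G$ other than $v$; once this is established, the set $\{x,v\}$ has cut-rank at most $1$ and, since $\abs{V}\ge 4$, the partition $(\{x,v\},V-\{x,v\})$ is a split of $G$, contradicting primality.

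To force $x$ to have no other neighbor, I would combine the given bound $\rho_{G\setminus v}(\{x,y\})\le 1$ with the defining property of a fully closed set, namely $\rho_G(A\cup\{y\})>\rho_G(A)=2$. The natural tool is \eqref{eq:gga1} of Lemma~\ref{lem:gga}, applied with designated vertex $v$ and the two sets $\{x,y\}$ and $A-\{v\}$. Because $x\in A-\{v\}$ and $y\notin A$, the intersection is $\{x\}$ and the union together with $v$ is $A\cup\{y\}$, so the inequality reads
\[
\rho_{G\setminus v}(\{x\})+\rho_G(A\cup\{y\})\le \rho_{G\setminus v}(\{x,y\})+\rho_G(A)\le 1+2=3.
\]
Since $\rho_G(A\cup\{y\})\ge 3$ by full closedness, this forces $\rho_{G\setminus v}(\{x\})=0$, which means $x$ has no neighbor in $V-\{v\}$.

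Consequently the row of $\A_G[\{x,v\},V-\{x,v\}]$ indexed by $x$ is identically zero, giving $\rho_G(\{x,v\})\le 1$, and the split argument outlined above completes the contradiction. I do not foresee a real obstacle: the only judgement call is to pair the hypothesised mixed pair $\{x,y\}$ with $Y=A-\{v\}$, chosen precisely so that reinserting $v$ reproduces $A\cup\{y\}$ and lets the fully closed hypothesis bite. If anything goes wrong, it will be a matter of getting the right variant of Lemma~\ref{lem:gga} (\eqref{eq:gga1} versus \eqref{eq:gga2}); the version above is the one whose left-hand side mentions $\rho_G$ on the set containing $v$, which is exactly where full closedness gives a strict inequality.
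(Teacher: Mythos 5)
Your proposal is correct and follows essentially the same argument as the paper: both apply \eqref{eq:gga1} of Lemma~\ref{lem:gga} with $Y=A-\{v\}$ to deduce $\rho_{G\setminus v}(X\cap A)=0$ from the fully-closed bound $\rho_G(A\cup\{y\})\ge 3$, and then contradict primality via $\rho_G((X\cap A)\cup\{v\})\le 1$.
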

\begin{proof}
  We obtain
  $\rho_{G\setminus v}(X\cap A)+\rho_G(X\cup A)\le \rho_{G\setminus
      v}(X)+\rho_G(A)\le \rho_G(A)+1$ by (\ref{eq:gga1}) of
  Lemma~\ref{lem:gga}.
  Suppose that $\abs{X\cap A}=1$.
  Since $A$ is fully closed,  $\rho_G(X\cup A)>\rho_G(A)$.
  Thus, $\rho_{G\setminus v}(X\cap A)=0$
  and therefore $\rho_G((X\cap A)\cup \{v\})\le 1$, 
  a contradiction to the
  assumption that $G$ is prime.
\end{proof}

\begin{lemma}\label{lem:pick}
  Let $G=(V,E)$ be a prime graph with $\abs{V}\ge 4$.
  Let $A$ be  a  fully closed set and let $v\in A$.
  If $G\setminus v$ is
  \rc{2}{1}{} but not prime, 
  then there is  a subset $X$ of $A- \{v\}$
  such that $\abs{X}=2$ and $\rho_{G\setminus v}(X)\le 1$.
\end{lemma}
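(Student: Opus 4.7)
The plan is a proof by contradiction: suppose every 2-subset $X$ of $A - \{v\}$ has $\rho_{G\setminus v}(X) \ge 2$. Since $G \setminus v$ is \rc{2}{1}{} but not prime, it admits a split $(X', Y')$ with $\rho_{G\setminus v}(X') \le 1$; the \rc{2}{1}{} hypothesis forces, without loss of generality, $\abs{X'} = 2$. Lemma~\ref{lem:triplet1side} gives $\abs{X' \cap A} \ne 1$, so either $X' \subseteq A - \{v\}$ (which directly contradicts our assumption) or $X' \subseteq V - A$. Hence we may write $X' = \{p, q\} \subseteq V - A$, and set $C = Y' \cap (V - A)$.

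Next I collect basic structural consequences. Since $A$ is fully closed, $\rho_G(A \cup \{p\}) = 3$; this forces $\abs{V - A} \ge 3$, because $\abs{V - A} \le 2$ would give $\abs{V - A - \{p\}} \le 1$ and hence $\rho_G(A \cup \{p\}) = \rho_G(V - A - \{p\}) \le 1$. Also, $\rho_G(A - \{v\}) = 2$: indeed $\rho_G(A - \{v\}) \ge \rho_G(A) - 1 = 1$, and if it were exactly $1$ then primality of $G$ would force $\abs{(V - A) \cup \{v\}} \le 1$, yielding $V = A$ and $\rho_G(A) = 0$. In particular $\rho_{G\setminus v}(A - \{v\}) \in \{1, 2\}$.

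If $\rho_{G\setminus v}(A - \{v\}) = 1$, then $(A - \{v\}, V - A)$ is a split of $G \setminus v$; combined with $\abs{V - A} \ge 3$, the \rc{2}{1}{} hypothesis forces $\abs{A - \{v\}} = 2$, and then $A - \{v\}$ itself is the required 2-subset, contradicting our assumption. So we are reduced to the hard case $\rho_{G\setminus v}(A - \{v\}) = 2$.

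In this case, applying (\ref{eq:gga1}) with $x = v$, $X = X'$, and $Y = A - \{v\}$ yields $\rho_G(A \cup X') \le 1 + 2 = 3$; combined with full closure of $A$ and primality of $G$ one deduces $\abs{C} \ge 2$. Moreover the equality $\rho_G(A) = \rho_{G\setminus v}(A - \{v\}) = 2$ forces the column $\A_G[A - \{v\}, \{v\}]$ to lie in the column span of $\A_G[A - \{v\}, V - A]$, while the bound $\rho_{G\setminus v}(X') \le 1$ makes the submatrix $\A_{G\setminus v}[A - \{v\}, X']$ of rank at most $1$. I would then combine these rigidity constraints with the fully closed property of $A$ to locate two vertices of $A - \{v\}$ whose rows in $\A_{G\setminus v}[A - \{v\}, V - \{v\}]$ coincide, producing the required 2-subset. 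The main obstacle is exactly this extraction: turning the near-twin structure of $p, q$ outside $A$ into a matching pair inside $A - \{v\}$. I expect the cleanest route to be a pigeonhole argument on the at most four row patterns appearing in the rank-2 matrix $\A_{G\setminus v}[A - \{v\}, V - A]$ over $\mathrm{GF}(2)$, possibly supplemented by a pivot that transfers the split into the partition $(A - \{v\}, V - A)$.
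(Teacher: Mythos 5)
Your setup is sound and tracks the paper's proof up to the point where the split $X'=\{p,q\}$ is placed inside $V-A$ via Lemma~\ref{lem:triplet1side}, and your "easy case" $\rho_{G\setminus v}(A-\{v\})\le 1$ is finished correctly. But the proof then stalls: the case $\rho_{G\setminus v}(A-\{v\})=2$ is not resolved, only sketched ("I would then combine these rigidity constraints\dots", "I expect the cleanest route to be a pigeonhole argument"). That sketch does not obviously close: two vertices of $A-\{v\}$ whose rows agree on the columns indexed by $V-A$ need not form a set of cut-rank at most $1$ in $G\setminus v$, because their rows on the columns indexed by $A-\{v\}$ are uncontrolled by the rank-$2$ structure of $\A_{G}[A-\{v\},V-A]$. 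So as written there is a genuine gap.

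The missing idea is that your hard case is vacuous. Apply inequality \eqref{eq:gga2} of Lemma~\ref{lem:gga} with $x=v$, $X=X'$ and $Y=V-A$ (rather than $Y=A-\{v\}$, which is what you did and which only yields the weak bound $\rho_G(A\cup X')\le 3$). Since $X'\subseteq V-A$, this gives
\[
\rho_G(X')+\rho_{G\setminus v}(V-A)\le \rho_{G\setminus v}(X')+\rho_G(V-A)\le 1+2.
\]
Primality of $G$ forces $\rho_G(X')=2$, hence $\rho_{G\setminus v}(A-\{v\})=\rho_{G\setminus v}(V-A)\le 1$, which is exactly your easy case: since $\abs{V-A}\ge 3$ and $G\setminus v$ is \rc{2}{1}, we get $\abs{A-\{v\}}\le 2$, and full closure gives $\abs{A}=3$, so $A-\{v\}$ itself is the desired set. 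This one-line submodularity step is the paper's argument; with it, your proof would be complete and essentially identical to the paper's.
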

\begin{proof}
  If $\abs{V}=4$, then it is trivial because
  every set $X$ of 2 vertices of $G\setminus v$ would
  satisfy $\rho_{G\setminus v}(X)\le 1$.
  Therefore we may assume that $\abs{V}\ge 5$.
  By Proposition~\ref{prop:rankconn}, $G\setminus v$ is connected.
  Since $G\setminus v$ is \rc{2}{1} 
  and $G\setminus v$ is not prime, 
  there exists a  subset $X$ of $V- \{v\}$ such that
  $\rho_{G\setminus  v}(X)\le 1$
  and $\abs{X}=2$.
  By Lemma~\ref{lem:triplet1side}, we may assume that
  $X\cap A=\emptyset$.
  Then $\abs{V-A}>2$ because $G$ is prime, $A\neq V$, and $A$ is fully closed. 

  Let  $B=V- A$.
  By \eqref{eq:gga2} of Lemma~\ref{lem:gga},
  \[\rho_{G\setminus v}(X)+\rho_G(B)\ge \rho_G(X)+\rho_{G\setminus v}(B).\]
  Since $G$ is prime, we have $\rho_G(X)=2$
  and therefore
  $2=\rho_G(A)\ge \rho_{G\setminus  v}(A-\{v\})+1$.
  Since $G\setminus v$ is \rc{2}{1} and $\abs{V-A}>2$, we have $\abs{A-\{v\}}\le 2$.
  As $A$ is fully closed, $\abs{A}=3$.

  Therefore, $\rho_{G\setminus v}(A- \{v\})\le 1$
  and $\abs{A- \{v\}}=2$.
\end{proof}

\begin{proof}[Proof of Proposition~\ref{prop:fully}]
  Suppose $A$ has no such vertex.
  Let $v_1\in A$.
  By Proposition~\ref{prop:bixby}, $G\setminus v_1$ or $G\pivot v_1w_1\setminus v_1$ for a neighbor $w_1$ of $v_1$ is \rc{2}{1}
  and therefore 
  we may assume that $G\setminus v_1$ is \rc{2}{1},
  because otherwise we may replace $G$ with $G\pivot v_1w_1$.
  Since $G\setminus v_1$ is not prime, there exists a set
  $X$ such that $\abs{X}=2$, $X\subseteq A-\{v_1\}$,
  and $\rho_{G\setminus v_1}(X)\le 1$ by Lemma~\ref{lem:pick}.
  Let $X=\{v_0,v_2\}$. Since $v_0$ has at least two neighbors, let $w_0$ be
  a neighbor of $v_0$ such that $w_0\neq v_1$.
  Then $G\setminus v_0$ or $G\pivot v_0w_0\setminus
    v_0$ is \rc{2}{1} by Proposition~\ref{prop:bixby}.
  If $G\setminus v_0$ is \rc{2}{1}, then a sequence
  $v_0$, $v_1$ in a graph $G$ has the desired property that
  both $G\setminus v_0$ and $G\setminus v_1$ are \rc{2}{1}.
  If $(G\pivot v_0w_0)\setminus v_0$ is \rc{2}{1}, then
  a sequence $v_0$, $v_1$ in a graph $G\pivot v_0w_0$ has the desired
  property.
  So we may assume that both $G\setminus v_0$ and $G\setminus v_1$ are
  \rc{2}{1}.
  By (ii) of Lemma~\ref{lem:intprime}, $v_2$ has a neighbor $w_2$ other than
  $v_0$ and $v_1$.
  Then $G\setminus v_2$ or $(G\pivot v_2w_2)\setminus v_2$ is
  \rc{2}{1} by Proposition~\ref{prop:bixby}.
  So we may assume that $G\setminus v_0$, $G\setminus v_1$, and
  $G\setminus v_2$ are \rc{2}{1}{} and
  $\rho_{G\setminus v_1}(\{v_0,v_2\})\le 1$.

  Therefore we can choose  a sequence $v_0, v_1,\ldots,v_k\in A$ of
  distinct vertices
  of a graph $G'$ pivot-equivalent to $G$
  such that
  $\rho_{G'\setminus v_i}(\{v_{i-1},v_{i+1}\})\le 1$ for all
  $1\le i<k$
  and $G'\setminus v_0$, $G'\setminus v_1$, $\ldots$, $G'\setminus v_k$
  are \rc{2}{1}{}
  and $k$ is maximum.
  By the above construction, $k\ge 2$.
  We may assume that $G=G'$.

  If $v_{i-1}$ is adjacent to only $v_i$ and $v_{i+1}$,  then
  $G\setminus v_{i-1}$ is prime by
  (ii) of Lemma~\ref{lem:intprime}.
  So $v_{i-1}$ has at least one neighbor other than $v_i$ and
  $v_{i+1}$.
  Similarly $v_{i+1}$ has at least one neighbor other than $v_i$ and
  $v_{i-1}$.
  As $\rho_{G\setminus v_i}(\{v_{i-1},v_{i+1}\})\le 1$, 
  we deduce that $v_{i-1}$ and $v_{i+1}$ are twins in $G\setminus v_i$.
  (Two vertices $x,y$ of $G$ are called \emph{twins} if no vertex in $V(G)-\{x,y\}$
  is adjacent to only one of $x$ and~$y$.)

  Since $G\setminus v_k$ is not prime, there is a set $X$
  such that $\rho_{G\setminus v_k}(X)\le 1$ and $\abs{X}=2$.
  By Lemma~\ref{lem:pick}, we may assume that $X\subseteq A$.

  We claim that $X\cap \{v_{k-2},v_{k-1}\}=\{v_{k-1}\}$.
  If   $X\cap\{v_{k-2},v_{k-1}\}=\{v_{k-2}\}$,
  then $\rho_{G\setminus v_{k-1}}(X\cup\{v_{k}\})\ge 2$
  because $G\setminus v_{k-1}$ is \rc{2}{1}.
  Then
  \[
    2\ge \rho_{G\setminus v_{k-1}} (\{v_{k-2},v_k\})
    +\rho_{G\setminus v_k} (X)
    \ge \rho_{G\setminus v_k}(\{v_{k-2}\})+
    \rho_{G\setminus v_{k-1}} (X\cup \{v_k\})\ge3;
  \]
  a contradiction.

  If $X\cap  \{v_{k-2},v_{k-1}\}=\emptyset$ or $X\cap
    \{v_{k-2},v_{k-1}\}=\{v_{k-2},v_{k-1}\}$, then
  let $Z=X$ or $Z=V(G\setminus v_k)- X$ so that
  $Z\cap \{v_{k-2},v_{k}\}=\{v_{k-2}\}$
  and $\rho_{G\setminus v_k}(Z)\le 1$. Then
  \[2\ge \rho_{G\setminus v_{k-1}}(\{v_{k-2},v_k\})+
    \rho_{G\setminus v_k}(Z)\ge
    \rho_{G\setminus v_k\setminus v_{k-1}}(\{v_{k-2}\})
    +\rho_G(Z\cup\{v_k\}).\]
  Since $G$ is prime
  and $v_{k-2}$ has a neighbor other than $v_{k-1}$ and $v_k$,
  we have $\rho_G(Z\cup\{v_k\})\ge 2$
  and $\rho_{G\setminus v_k\setminus v_{k-1}}(\{v_{k-2}\})\ge 1$;
  a contradiction.
  Thus we conclude that $X\cap \{v_{k-2},v_{k-1}\}=\{v_{k-1}\}$.

  Let $v_{k+1}$ be a vertex in $A$ such that $X=\{v_{k-1},v_{k+1}\}$.
  We claim that $v_{k+1}$ is distinct from $v_0$, $v_1$, $\ldots$,
  $v_k$.
  We already proved that $v_{k+1}\neq v_{k-1},v_k$.
  Suppose that $v_{k+1}=v_i$ for some $0\le i<k$ such that $i\equiv
    k+1\pmod 2$.
  Then for all $w\notin v_i,v_{i+1},\ldots,v_{k-1}$,
  $v_{i}$ and $v_{k-1}$ are twins in $G\setminus v_{i+1}\setminus
    v_{i+2}\cdots \setminus v_{k-2}$,
  because $v_{i+2j}$ and $v_{i+2j+2}$ are twins in $G\setminus
    v_{i+2j+1}$ for  $j=0,1,\ldots,(k-i-3)/2$.
  In particular if $w=v_k$, then we deduce that $v_k$ is adjacent to
  both $v_{k-1}$ and $v_i$
  or $v_k$ is nonadjacent to both $v_{k-1}$ and $v_i$.
  If $v_{i}$ has no neighbors other than $v_{k-1}$ and $v_k$, then
  by Lemma~\ref{lem:intprime}, $G\setminus v_i$ is prime,
  a contradiction.
  Thus $v_i$ has some neighbors other than $v_{k-1}$ and $v_k$.
  Similarly $v_{k-1}$ has some neighbors other than $v_{i}$ and $v_k$.
  Therefore we deduce that $v_i$ and $v_{k-1}$ are twins in
  $G\setminus v_k$. But if so, then $\rho_G(\{v_i,v_{k-1})\le1$, a
  contradiction to the assumption that $G$ is prime.

  Now suppose that $v_{k+1}=v_i$ for some $0\le i<k$ with $i\equiv
    k\pmod 2$. Then $v_0$, $v_1$, $\ldots$, $v_k$ have the same set of
  neighbors in $V(G)-\{v_0,v_1,\ldots,v_k\}$. This implies
  $\rho_G(\{v_0,v_1,\ldots,v_k\})\le 1$, a contradiction to the
  assumption that $G$ is prime.
  This completes the proof of the claim that $v_{k+1}$ is distinct
  from $v_0,v_1,\ldots,v_k$.

  Let $X=\{v_0,v_1,\ldots,v_k,v_{k+1}\}$.
  Since $G\setminus v_{k+1}$ is not prime, $v_{k+1}$ has
  a neighbor other than $v_k$ and $v_{k-1}$ by
  Lemma~\ref{lem:intprime}
  and therefore $v_{k+1}$ and $v_{k-1}$ are twins in $G\setminus v_k$.
  Thus we can argue that
  $v_0,v_2,\ldots$ have the same  set $N_0$ of neighbors in $V(G)-X$
  and $v_1,v_3,\ldots$ also have the same set $N_1$ of neighbors in
  $V(G)-X$.
  Since $\rho_G(X)\ge 2$, both  $N_0$ and $N_1$ are nonempty.
  This implies that $v_{k+1}$ has a neighbor $w$ in $V(G)-X$.

  We know that $G\setminus v_{k+1}$ is not \rc{2}{1},
  because $k$ is chosen as the maximum.

  Let $G'=G\pivot v_{k+1}w$. Then $G'\setminus v_{k+1}$ is
  \rc{2}{1}{}
  and it is easy to verify that $v_0,v_1,\ldots,v_{k+1}$ is a longer
  sequence satisfying all the conditions, which leads to a
  contradiction.
  Therefore
  there must exist a vertex $x\in A$ such that $G\setminus x$
  or $G/ x$ is prime.
\end{proof}

\section{Generating prime \rc{3}{3}{} pivot-minors}
\label{sec:main}

Here is our main theorem which provides
a proper prime \rc{3}{3}{} pivot-minor
of a \rc{3}{2}{} graph.

\begin{theorem}\label{thm:main}
  If   $G$ is a prime \rc{3}{2}{} graph
  with at least $10$ vertices, then
  $G$ has a prime \rc{3}{3}{} pivot-minor $H$
  with one fewer vertices.
\end{theorem}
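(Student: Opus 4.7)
The plan is to argue by contradiction: suppose no vertex $v$ of $G$ gives a pivot-minor in $\{G\setminus v,\,G/v\}$ that is simultaneously prime and \rc{3}{3}{}. The rank-connectivity half essentially comes for free, since Proposition~\ref{prop:bixby} applied with $k=3$ and $\ell=2$ guarantees that for every vertex $v$ at least one of $G\setminus v,\,G/v$ is \rc{3}{(2\cdot 2 + 3 - 1)} = \rc{3}{6}{}, which is strictly stronger than the required \rc{3}{3}{}. So the whole difficulty is to \emph{coordinate} this with primality, and the working hypothesis toward a contradiction becomes: for every vertex $v$, the pivot-minor singled out by Proposition~\ref{prop:bixby} must fail to be prime, i.e., must admit a split.

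In Section~\ref{sec:main} I would first dispose of two easier cases. The cleanest one uses Proposition~\ref{prop:fully}: whenever $G$ contains a fully closed set $A$, then since $|V(G)|\ge 10 > 8$ there exists $v\in A$ such that $G\setminus v$ or $G/v$ is prime. Combining with Proposition~\ref{prop:bixby} one must verify that $v$ can be chosen so that the \emph{same} pivot-minor is also \rc{3}{6}{}; this requires either retracing the proof of Proposition~\ref{prop:fully} while tracking which of the two operations preserves rank-connectivity, or an exchange argument at those vertices where the operation giving primality differs from the operation singled out by Proposition~\ref{prop:bixby}. A second easy situation should cover a degenerate configuration where a direct application of Proposition~\ref{prop:bixby} and the submodularity Lemma~\ref{lem:bixbyineq} suffices without invoking Proposition~\ref{prop:fully} at all.

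The principal obstacle, deferred to Sections~\ref{sec:triplet} and~\ref{sec:internally4prime}, is the remaining case in which no fully closed set exists. In this regime every rank-$2$ subset of $V(G)$ of size at least~$3$ can be enlarged while keeping the rank at~$2$, and since $G$ is \rc{3}{2}{} such extensions must terminate quickly, so all rank-$2$ cuts of $G$ are small and structurally constrained. To derive a contradiction I would follow the chain-style strategy already used in the proof of Proposition~\ref{prop:fully}: pick a vertex $v_0$ for which the \rc{3}{6}{} pivot-minor at $v_0$ has a split $(A,B)$; use Lemma~\ref{lem:gagb} and Lemma~\ref{lem:gga} to transfer $(A,B)$ back to a low-rank configuration inside $G$; then iteratively build a sequence $v_0,v_1,\ldots,v_k$ of vertices satisfying triplet-like rank conditions in suitable pivot-equivalent graphs, and exploit maximality of this chain together with the primality and \rc{3}{2}{}ness of $G$ to force a contradiction. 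The hardest part is the delicate finite-case analysis that appears at the ``end'' of such a chain when several low-rank cuts overlap or interact with the side $B$ of the split, and managing this carefully is what I expect to consume most of the two remaining technical sections.
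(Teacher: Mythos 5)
There is a genuine gap, and it starts at the very first step. You claim that Proposition~\ref{prop:bixby} with $k=3$, $\ell=2$ yields that one of $G\setminus v$, $G/v$ is \rc{3}{6}, ``which is strictly stronger than the required \rc{3}{3}{}.'' The implication goes the other way: by definition, a \rc{k}{\ell} graph only forbids sets of cut-rank $<k$ with both sides of size $\ge k+\ell$, so \emph{larger} $\ell$ is a \emph{weaker} condition. \rc{3}{6} permits balanced cut-rank-$2$ separations with sides of size up to $8$, which \rc{3}{3} forbids. Consequently the rank-connectivity half of the theorem does not ``come for free''; it is, in fact, the main difficulty. Only in the subcase where $G$ is genuinely $3$-rank-connected does Proposition~\ref{prop:bixby} give the needed bound (there $\ell=0$, so one gets \rc{3}{2}, which is what Lemma~\ref{lem:4prime} uses).

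Your plan for the hard case is also not viable as described. The paper's hard case is not ``no fully closed set exists'' but rather: $G$ is prime \rc{3}{1} and has a $3$-set $T$ with $\rho_G(T)=2$. After pivoting to make $T$ a \emph{triplet} (Lemma~\ref{lem:triplet}), primality of $G\setminus x$ for each $x\in T$ follows relatively cheaply (Lemma~\ref{lem:primetriplet}); the real work is showing that some $G\setminus x$, $x\in T$, is \rc{3}{3}. This is done by assuming all three deletions admit balanced cut-rank-$2$ separations, packaging these into a ``barrier'' $((A_b,A_c),(B_a,B_c),(C_a,C_b))$, and running an extensive submodularity case analysis modeled on Hall's theorem for internally $4$-connected matroids (Sections~\ref{sec:triplet} and~\ref{sec:internally4prime}). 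A chain argument in the style of Proposition~\ref{prop:fully}, applied to an arbitrary vertex $v_0$ whose Bixby pivot-minor has a split, does not engage with this structure: you would still be left needing to rule out balanced rank-$2$ separations in the surviving pivot-minor, and nothing in your outline addresses that. The one piece of your proposal that does match the paper is the treatment of the \rc{3}{2}-but-not-\rc{3}{1} case via Proposition~\ref{prop:fully} applied to a fully closed $4$-set (Lemma~\ref{lemma:4prime4}), though even there the \rc{3}{3} bound requires a separate submodularity computation rather than an appeal to Proposition~\ref{prop:bixby}.
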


We consider three following cases:
\begin{enumerate}
  \item $G$ is $3$-rank-connected,
  \item $G$ is prime \rc{3}{1} but not $3$-rank-connected,
  \item $G$ is prime \rc{3}{2}{} but not \rc{3}{1}.
\end{enumerate}

The cases (1) and (3) are easier than the case (2). We will treat
those two easy cases in this section.
\begin{lemma}\label{lem:4prime}
  Let $G$ be a $3$-rank-connected graph with at least $6$ vertices.
  For each vertex~$v$,
  $G\setminus v$ or $G/v$ is prime \rc{3}{2}{}.
\end{lemma}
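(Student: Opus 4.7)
The plan is to deduce the statement directly by combining Proposition~\ref{prop:bixby} with Proposition~\ref{prop:rankconn}, with essentially no new work needed. This is why Lemma~\ref{lem:4prime} is grouped with the ``easier'' cases.

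First I would invoke Proposition~\ref{prop:bixby} with $k=3$ and $\ell=0$. Since $G$ is $3$-rank-connected, it is in particular \rc{3}{0}{}, so Proposition~\ref{prop:bixby} gives that $G\setminus v$ or $G/v$ is \rc{3}{(2\cdot 0+3-1)}{}, i.e.\ \rc{3}{2}{}. It remains to verify that whichever graph this is, it is also prime (i.e.\ $2$-rank-connected).

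For this, I would apply Proposition~\ref{prop:rankconn} with $k=3$. Since $\lvert V(G)\rvert\ge 6=2k$ and $G$ is $3$-rank-connected, the proposition yields that $G\setminus v$ is $2$-rank-connected, hence prime. For $G/v$, note first that $v$ is not isolated: since $3$-rank-connectedness entails $1$-rank-connectedness (i.e.\ connectedness), $v$ has some neighbor $y$. By Lemma~\ref{lem:pivotcutrank}, pivoting preserves the cut-rank function, so $G\pivot vy$ is again $3$-rank-connected on the same vertex set; applying Proposition~\ref{prop:rankconn} to $G\pivot vy$ shows that $G/v=(G\pivot vy)\setminus v$ is $2$-rank-connected as well, hence prime. (Because different choices of neighbor $y$ yield pivot-equivalent graphs, and pivot-equivalence preserves cut-ranks, the choice of $y$ is immaterial for this conclusion.)

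Combining the two paragraphs, one of $G\setminus v$, $G/v$ is \rc{3}{2}{}, and both are prime, so that one is a prime \rc{3}{2}{} graph, as required. There is no real obstacle here: the whole argument is a mechanical consequence of the two earlier structural propositions together with the fact that pivoting leaves the rank connectivity unchanged.
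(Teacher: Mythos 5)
Your proposal is correct and follows essentially the same route as the paper: apply Proposition~\ref{prop:bixby} (with $k=3$, $\ell=0$) to get that one of $G\setminus v$, $G/v$ is \rc{3}{2}{}, and use Lemma~\ref{lem:pivotcutrank} together with Proposition~\ref{prop:rankconn} to see that both are prime. The paper's proof is just a terser version of the same argument.
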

\begin{proof}
  Let $u$ be a neighbor of $v$. 
  By Lemma~\ref{lem:pivotcutrank}, 
  both $G$ and $G\pivot uv$ are $3$-rank-connected
  and 
  by Proposition~\ref{prop:rankconn}, both $G\setminus v$ and $G/v=G\pivot uv \setminus v$ are prime.
  By Proposition~\ref{prop:bixby}, $G\setminus v$ or $G/v$ is \rc{3}{2}{}.
\end{proof}

\begin{lemma}\label{lemma:4prime4}
  Let $G$ be a prime \rc{3}{2}{}
  graph with  at least 10 vertices.
  If $G$ is not \rc{3}{1},
  then there exists a vertex $x$ of $G$
  such that
  $G\setminus x$ or $G/x$ is prime \rc{3}{3}.
\end{lemma}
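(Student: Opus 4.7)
The plan is to use the failure of \rc{3}{1}{} to produce a small rank-$2$ fully closed set $A$, apply Proposition~\ref{prop:fully} to $A$ in order to obtain a prime pivot-minor $H = G \setminus v$ or $G/v$ with $v \in A$, and finally verify that $H$ is automatically \rc{3}{3}{} by a case analysis separating small and large $|V(G)|$.

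First I locate a cut of rank $2$ and size $4$: since $G$ is not \rc{3}{1}{} but is prime and \rc{3}{2}{}, some $X \subseteq V(G)$ satisfies $\rho_G(X) < 3$ and $\min(|X|, |V(G) - X|) \ge 4$, and primality together with \rc{3}{2}{} forces $\rho_G(X) = 2$ and $\min(|X|, |V(G) - X|) = 4$. After possibly swapping $X$ with its complement, assume $|X| = 4$ and $|V(G) - X| \ge 6$. I then close $X$ by iteratively adjoining vertices $y \in V(G) - A$ with $\rho_G(A \cup \{y\}) = 2$; this process must terminate with a proper subset $A \subsetneq V(G)$ (since $\rho_G(V(G)) = 0$), producing a fully closed set $A$ with $\rho_G(A) = 2$ and $|A| \ge 4 > 2$. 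Proposition~\ref{prop:fully}, valid because $|V(G)| \ge 10 \ge 8$, then supplies $v \in A$ such that $H := G \setminus v$ or $H := G/v$ is prime.

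To verify \rc{3}{3}{} of $H$, I pick any $Z \subseteq V(H)$ with $\rho_H(Z) \le 2$ and suppose toward contradiction that $|Z|, |V(H) - Z| \ge 6$. Because removing a single column from the adjacency submatrix (for $H = G \setminus v$), or using the identity $\rho_{G/v}(Z) = \rho_G(Z \cup \{v\}) - 1$ from the proof of Lemma~\ref{lem:bixbyineq} (for $H = G/v$), shifts the cut-rank by at most one, I obtain $\rho_G(Z) \le 3$ or $\rho_G(Z \cup \{v\}) \le 3$ respectively. If the relevant rank in $G$ is at most $2$, then both sides of the corresponding cut in $G$ have size $\ge 5$, which contradicts the \rc{3}{2}{} hypothesis immediately. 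Only the case where that rank equals exactly $3$ remains; call this the rank-$3$ subcase.

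To dispose of the rank-$3$ subcase, I split on $|V(G)|$. If $|V(G)| \le 12$, then $|V(H)| \le 11$ and hence $\min(|Z|, |V(H) - Z|) \le 5$ automatically, contradicting our standing assumption. Otherwise $|V(G)| \ge 13$, and I apply Proposition~\ref{prop:submodular} to the pair $(A, Z)$ (and in parallel to $(A, V(G) - Z)$) to obtain
\[
\rho_G(A \cap Z) + \rho_G(A \cup Z) \le \rho_G(A) + \rho_G(Z) = 5,
\]
together with its companion inequality on $A - Z$ and $Z - A$. I would then combine these with full closedness of $A$ (i.e.\ $\rho_G(A \cup \{y\}) = 3$ for every $y \in V(G) - A$), with primality (giving the lower bound $\rho \ge 2$ whenever both sides have size $\ge 2$), and with \rc{3}{2}{} applied to the large set $Z - A$, to pin down the four intersection sizes and contradict the submodular bound. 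The main obstacle is exactly this last step: full closedness is only an infinitesimal condition (about adding one vertex), so to extract a useful lower bound on $\rho_G(A \cup Z)$ when $Z - A$ can contain many vertices one must combine it with primality and the \rc{3}{2}{} hypothesis simultaneously. The symmetric configuration where $|V(G) - A| = 4$ (so $|A| \ge 6$) is handled in parallel by working with $V(G) - A$ in place of $A$, since $V(G) - A$ also has cut-rank $2$ and size $4$, even though it need not itself be fully closed.
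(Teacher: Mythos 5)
Your setup through Proposition~\ref{prop:fully} matches the paper, but the verification that the resulting pivot-minor is \rc{3}{3}{} has a real gap, which you yourself flag (``The main obstacle is exactly this last step'').

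Two specific problems. First, you never observe that the fully closed set here has size exactly $4$: once $|X|=4$, $\rho_G(X)=2$, $|V(G)|\ge10$, the \rc{3}{2}{} hypothesis forces $\rho_G(X\cup\{w\})\ge3$ for every $w\notin X$, because $|X\cup\{w\}|=5$ and $|V(G)-X-\{w\}|\ge5$. So $A=X=\{a,b,c,d\}$ and nothing is gained by your closure process; but more to the point, you lose the crucial arithmetic $|A-\{v\}|=3$ that drives the paper's argument. Second, and more seriously, in the rank-$3$ subcase your strategy of comparing $\rho_G(Z)\le 3$ with \rc{3}{2}{} cannot reach a contradiction, since \rc{3}{2}{} only constrains rank-$\le2$ cuts. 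The paper instead exploits the structure: writing $X=\{a,b,c,d\}$ with $v=a$, one has $\rho_{G\setminus a}(\{b,c,d\})=2$ (equal to $\rho_G(X)$), and then \eqref{eq:gga1} of Lemma~\ref{lem:gga} gives, for any rank-$\le 2$ set $Z$ in $G\setminus a$ containing $\{b,c,d\}$,
\[
\rho_{G\setminus a}(\{b,c,d\})+\rho_G(Z\cup\{a\})\le \rho_{G\setminus a}(Z)+\rho_G(X),
\]
so $\rho_G(Z\cup\{a\})\le\rho_{G\setminus a}(Z)\le 2$. That is, the cut lifts to a rank-$\le2$ cut in $G$, not merely rank-$\le3$, and then \rc{3}{2}{} applies directly. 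The case $|Z\cap\{b,c,d\}|=2$ is reduced to this one by a single submodular step using $\rho_{G\setminus a}(\{b,c\})=2$ (from primality of $G\setminus a$), and the cases $|Z\cap\{b,c,d\}|\le 1$ are handled by complementing. Without this rank-drop from $3$ to $2$, the submodular bound $\rho_G(A\cap Z)+\rho_G(A\cup Z)\le 5$ you propose does not on its own pin down the intersection sizes, and you have correctly diagnosed that full closedness alone is too weak to finish.
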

\begin{proof}
  Since $G$ is prime but is not \rc{3}{1},
  there exists a subset  $X=\{a,b,c,d\}$  of $V(G)$ such that
  $\rho_G(\{a,b,c,d\})=2$.
  Since $\abs{V(G)}\ge 10$ and $G$ is \rc{3}{2},
  $\rho_G(X\cup \{v\})>2$ for all $v\in V(G)-X$.
  Therefore, $X$ is fully closed.
  By Proposition~\ref{prop:fully}, we may assume that
  $G\setminus a$ %
  is prime.

  Since $G\setminus a$ is prime, $\rho_{G\setminus a}(\{b,c,d\}) \ge  2$.
  Because $\rho_{G\setminus a}(\{b,c,d\})\le \rho_G(X)=2$,
  we have
  $\rho_{G\setminus a}(\{b,c,d\})=  2$.

  Let us consider a partition $(A,B)$ of $V(G\setminus a)$
  such that $\rho_{G\setminus a}(A)\le 2$.
  We may assume that $\abs{A\cap X}\ge \abs{B\cap X}$.

  (1) First suppose that $b,c,d\in A$.
  We claim that $\abs{A}\le 3$ or $\abs{B}\le 4$.
  By Lemma~\ref{lem:gga},
  $\rho_{G\setminus a}(A)+\rho_G(X)\ge \rho_{G\setminus a}(A\cap X)
    +\rho_G(A\cup X)$.
  Since
  $\rho_G(X)=2$ and $\rho_{G\setminus a}(\{b,c,d\})=2$,
  we have $\rho_G(A\cup \{a\})\le \rho_{G\setminus a}(A)$.
  Since $G$ is \rc{3}{2},
  we have $\abs{A}\le 3$ or $\abs{B}\le 4$.

  (2) Now consider the case when $\abs{A\cap X}=2$. We may assume that
  $A\cap X=\{b,c\}$.
  By the submodular inequality,
  $  \rho_{G\setminus a}(\{b,c,d\})+\rho_{G\setminus a}(A)
    \ge \rho_{G\setminus a}(A\cup\{d\})+
    \rho_{G\setminus a}(\{b,c\})$.
  Since  $G\setminus a$ is prime,
  we have $  \rho_{G\setminus a}(\{b,c\})=2$.
  Thus, we deduce that $\rho_{G\setminus a}(A\cup \{d\})\le
    \rho_{G\setminus a}(A)$.
  By applying (1) with $A\cup \{d\}$, we deduce that $\abs{A}+1\le 3$ or $\abs{B}-1\le 4$.

  By (1) and (2), $G\setminus a$ is \rc{3}{3}.
\end{proof}

\section{Triplets}
\label{sec:triplet}
To prove Theorem~\ref{thm:main},
we may assume that a graph $G$ is prime
\rc{3}{1} but not $3$-rank-connected
by Lemmas~\ref{lem:4prime} and~\ref{lemma:4prime4}.
Therefore, $G$ must have a set $T$ of vertices such that $\abs{T}=3$ and $\rho_G(T)=2$.
Our aim is to
prove that $G\setminus x$ is prime \rc{3}{3}{}
for some $x\in T$.
However, we can not achieve this
if $G\setminus x$ is not prime.
Therefore we wish to make sure that
$\rho_{G\setminus x}(T-\{x\})=2$ for some $x\in T$.

It turns out that we can achieve  a stronger statement;
there exists a graph $G'$ pivot-equivalent to $G$
such that
$\rho_{G'\setminus x}(T-\{x\})=2$ for \emph{all} $x\in T$.
In this section, we will prove this.

In a graph $G$, let us call a set $T$ of vertices a  \emph{triplet}
if
$\abs{T}=3$,
$\rho_G(T)=2$, and
$\rho_{G\setminus x}( T-\{x\})=2$
for each $x\in T$.

\begin{lemma}\label{lem:triplet}
  Let $G$ be  a prime graph and $\rho_G(\{a,b,c\})=2$.
  Then there exists a graph~$G'$ pivot-equivalent to $G$
  such that $\{a,b,c\}$ is a triplet of $G'$.
\end{lemma}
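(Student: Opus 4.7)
The plan is as follows. First, observe that $\{a,b,c\}$ is a triplet exactly when the three rows $r_a,r_b,r_c$ of $\A_G$ indexed by $a,b,c$ and restricted to columns $V(G)-\{a,b,c\}$ are pairwise linearly independent over $\mathrm{GF}(2)$; equivalently, no two are equal and none is zero. By Lemma~\ref{lem:pivotcutrank}, any graph $G'$ pivot-equivalent to $G$ still has $\rho_{G'}(\{a,b,c\})=2$, so its three analogous rows also span a $2$-dimensional subspace of $\mathrm{GF}(2)^{V(G)-\{a,b,c\}}$. Three vectors in such a configuration fall into exactly one of three cases: (A) $r_a+r_b+r_c=0$ with all three distinct and nonzero; (B) two rows are equal and nonzero while the third is distinct; (C) one row is zero and the other two are linearly independent. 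Case (A) is the triplet condition, so it suffices to show that we can always pivot into Case (A).

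I will prove two ``improvement'' steps: from Case (B) one pivot reaches Case (A), and from Case (C) one pivot reaches Case (B). Composing them, at most two pivots suffice. Both use the explicit pivot formula $\A_{G\pivot vw}[x,z]=\A_G[x,z]+\A_G[x,v]\A_G[w,z]+\A_G[x,w]\A_G[v,z]$ for $x,z\notin\{v,w\}$, together with the label swap $\A_{G\pivot vw}[v,z]=\A_G[w,z]$ and $\A_{G\pivot vw}[w,z]=\A_G[v,z]$ for $z\notin\{v,w\}$.

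For Case (B)$\to$(A): relabel so that $r_a=r_b$. Primality of $G$ forbids $a$ and $b$ from being twins, so they differ in their adjacency to $c$; assume $ac\in E$ and $bc\notin E$. Since $r_c\neq 0$, choose $y\in V(G)-\{a,b,c\}$ with $cy\in E$ and pivot $cy$. A direct computation with the formula above gives $r'_a(y)=1$, $r'_b(y)=0$, $r'_c(y)=1$ and verifies $r'_a+r'_b+r'_c=0$ on the remaining coordinates; the non-vanishing of $r'_b$ is checked separately in the sub-cases $ay\in E$ (using $r_a\neq r_c$) and $ay\notin E$ (using $r_a\neq 0$). This yields Case (A).

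For Case (C)$\to$(B): relabel so that $r_a=0$; then $a$ has no neighbor outside $\{a,b,c\}$, and connectivity of $G$ forces $ab\in E$ or $ac\in E$, so after swapping $b,c$ if needed we assume $ab\in E$. Since $r_b\neq 0$, choose $y\in V(G)-\{a,b,c\}$ with $by\in E$ and pivot $by$. The computation shows $r'_a=r'_b$ (both equal the row of $y$ with an additional $1$ in the $y$-coordinate), while $r'_c\neq r'_a$ (checked first at the $y$-coordinate, and otherwise using the linear independence of $r_b$ and $r_c$). The main obstacle is this bookkeeping: correctly combining the toggle step and the label swap of the pivot, and ruling out degeneracies in each sub-case by appealing to primality of $G$, non-vanishing of the relevant row, or the linear independence of $r_b,r_c$ in Case (C).
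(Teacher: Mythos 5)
Your proof is correct, and it is essentially the ``graphical proof'' that the paper explicitly mentions is possible but declines to write out: the paper's own proof opens with ``A graphical proof is possible, but we present a proof using Lemma~\ref{lem:bixbyineq}.'' The paper instead picks a neighbor $x$ of the offending vertex $c$ outside $\{a,b,c\}$, pivots $cx$, and invokes Lemma~\ref{lem:bixbyineq} (the Bixby-type cut-rank inequality) to obtain $\rho_{G\pivot cx\setminus c}(\{a,b\})+\rho_{G\setminus c}(\{a,b\})\ge\rho_G(\{a,b\})+\rho_G(\{a,b,c\})-1=3$ in one line, then repeats with a second pivot if another pair still fails; the algebraic observation that a rank-$2$ matrix with $3$ rows has two linearly independent rows is used to control which pair can fail. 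You instead classify the three restricted rows $r_a,r_b,r_c$ over $\mathrm{GF}(2)$ into the three rank-$2$ configurations (A) pairwise distinct and nonzero, (B) exactly two equal, (C) one zero, and show by explicit entry-wise computation with the pivot toggle formula and label swap that a single well-chosen pivot improves (C) to (B) and (B) to (A), using primality (no twins) and connectivity to rule out degeneracies. Both arguments use at most two pivots and both pick the pivot partner as a neighbor of the defective vertex outside the triple; the difference is that the paper spends a pre-proved submodularity lemma to avoid the bookkeeping you carry out by hand. Your version is more self-contained and makes the combinatorial content transparent; the paper's is shorter given the supporting lemma already in hand. The one place to double-check in your write-up is that in case (B) primality really forces $a$ and $b$ to differ at $c$: you should note that $|V(G)|\ge 5$ (since $\rho_G(\{a,b,c\})=2$ forces at least two vertices outside the triple), so $\rho_G(\{a,b\})\ge 2$ is indeed guaranteed by primeness and twins are excluded.
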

\begin{proof}
  A graphical proof is possible, but we present a proof using
  Lemma~\ref{lem:bixbyineq}.
  Suppose that $\{a,b,c\}$ is not a triplet.
  Let us assume that $\rho_{G\setminus c}(\{a,b\})=1$.
  Let $x$ be a neighbor of $c$, other than $a$ and $b$.
  Such $x$ should exist, because $\rho_G(\{a,b,c\})=2$.
  Then by Lemma~\ref{lem:bixbyineq},
  \[\rho_{G\pivot cx\setminus c}(\{a,b\})
    +\rho_{G\setminus c}(\{a,b\})\ge
    \rho_G(\{a,b\})+\rho_G(\{a,b,c\})-1= 3.\]
  So $\rho_{G\pivot cx\setminus c}(\{a,b\})=2$.
  Suppose that $\{a,b,c\}$ is not a triplet of $G'=G\pivot cx$.
  We may assume that $\rho_{G'\setminus b}(\{a,c\})=1$.
  We deduce that $\rho_{G\setminus b}(\{a,c\})=1$ and therefore
  $\rho_{G\setminus a}(\{b,c\})=2$, because if  a matrix with 3 rows
  has rank 2, then there are two linearly independent rows.
  Let $y$ be a neighbor of $b$ in $G'$, other than $a$ and $c$.
  Similarly to the above argument, we deduce that
  $\rho_{G'\pivot by\setminus b}(\{a,c\})=2$.
  In addition,
  $\rho_{G'\pivot by\setminus c}(\{a,b\})=\rho_{G'\setminus c}(\{a,b\})=2$
  and
  $\rho_{G'\pivot by\setminus a}(\{b,c\})=\rho_{G'\setminus
      a}(\{b,c\})
    =\rho_{G\setminus a}(\{b,c\})=2$.
\end{proof}
\begin{lemma}\label{lem:primetriplet}
  Let $G=(V,E)$ be a prime \rc{3}{1} graph with at least $8$ vertices.
  If $\{a,b,c\}$ is a triplet of $G$,
  then $G\setminus a$, $G\setminus b$, and $G\setminus c$ are prime.
\end{lemma}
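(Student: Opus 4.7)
My plan is a contradiction argument combining the triplet structure, submodularity (Lemma~\ref{lem:gga}), and the hypothesis that $G$ is \rc{3}{1}{}. By symmetry it suffices to prove that $G \setminus a$ is prime, so suppose for contradiction that $G \setminus a$ admits a split $(A, B)$, where $\rho_{G \setminus a}(A) \le 1$ and $|A|, |B| \ge 2$.

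First I would pin down the cut-rank of $A$ in both $G \setminus a$ and $G$. If $\rho_{G \setminus a}(A) = 0$, then $\rho_G(A) \le 1$, and since $|A|, |V - A| \ge 2$ this contradicts the primeness of $G$; hence $\rho_{G \setminus a}(A) = 1$. This immediately yields $\rho_G(A), \rho_G(A \cup \{a\}) \le 2$, and the primeness of $G$ upgrades both to equality. The triplet condition $\rho_{G \setminus a}(\{b, c\}) = 2$ rules out $\{b, c\} \subseteq A$ and $\{b, c\} \subseteq B$, so without loss of generality $b \in A$ and $c \in B$.

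Next I would apply \eqref{eq:gga1} of Lemma~\ref{lem:gga} with $x = a$, $X = A$, $Y = \{b, c\}$, obtaining
\[
  \rho_{G \setminus a}(\{b\}) + \rho_G(A \cup \{a, c\}) \le \rho_{G \setminus a}(A) + \rho_G(\{a, b, c\}) = 1 + 2 = 3.
\]
A quick argument shows $\rho_{G \setminus a}(\{b\}) = 1$: if $b$ had no neighbor outside $a$, then $\rho_G(\{a, b\}) \le 1$, and since $|V \setminus \{a, b\}| \ge 6$ this would contradict primeness. So $\rho_G(A \cup \{a, c\}) \le 2$, and since $G$ is \rc{3}{1}{} we must have $|A| + 2 \le 3$ (impossible) or $|B| - 1 \le 3$; hence $|B| \le 4$. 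Swapping the roles of $A, B$ and $b, c$ symmetrically gives $\rho_G(B \cup \{a, b\}) \le 2$ and $|A| \le 4$.

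Finally, applying the \rc{3}{1}{} hypothesis to the sets $A$ and $B$ themselves yields ``$|A| \le 3$ or $|B| \le 2$'' and ``$|B| \le 3$ or $|A| \le 2$.'' A brief case split now suffices: if $|A| \ge 4$, the first dichotomy forces $|B| \le 2$, so $|V| = |A| + |B| + 1 \le 4 + 2 + 1 = 7$; if $|B| \ge 4$, symmetrically $|V| \le 7$; and otherwise $|A|, |B| \le 3$, again giving $|V| \le 7$. In every case we contradict the hypothesis $|V| \ge 8$. The only substantive decision in the proof is which submodular inequality to invoke; once the bound $\rho_G(A \cup \{a, c\}) \le 2$ and its mirror are in hand, the \rc{3}{1}{} condition delivers the size bounds and the contradiction immediately.
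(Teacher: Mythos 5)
Your argument is essentially correct and takes a different route from the paper's: the paper first forces one side of the split to have exactly $2$ vertices and then, in its main case, invokes Lemma~\ref{lem:gagb} together with the triplet condition $\rho_{G\setminus b}(\{a,c\})=2$; you instead keep both sides of the split general, derive $\abs{A},\abs{B}\le 4$ from \eqref{eq:gga1} and its mirror, and close with the size dichotomies coming from $\rho_G(A),\rho_G(B)\le 2$. All of those computations check out, and the final case split does give $\abs{V}\le 7$.

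There is, however, one genuine gap: the assertion that the triplet condition $\rho_{G\setminus a}(\{b,c\})=2$ ``rules out'' $\{b,c\}\subseteq A$ and $\{b,c\}\subseteq B$. The cut-rank function is not monotone under taking subsets, so from $\rho_{G\setminus a}(A)=1$ and $\{b,c\}\subseteq A$ you cannot conclude $\rho_{G\setminus a}(\{b,c\})\le 1$; the columns indexed by $A-\{b,c\}$ may raise the rank of the two rows $b,c$ (only $A=\{b,c\}$ exactly is excluded for free). The claim is nevertheless true, but it needs its own submodularity argument: if $\{b,c\}\subseteq A$, then \eqref{eq:gga1} with $x=a$, $X=A$, $Y=\{b,c\}$ gives
\[
\rho_{G\setminus a}(\{b,c\})+\rho_G(A\cup\{a\})\le \rho_{G\setminus a}(A)+\rho_G(\{a,b,c\})=3,
\]
hence $\rho_G(A\cup\{a\})\le 1$, contradicting the primeness of $G$ since $\abs{A\cup\{a\}}\ge 3$ and $\abs{B}\ge 2$ (this is precisely the second case of the paper's proof). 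With that paragraph inserted, your proof is complete.
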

\begin{proof}
  Suppose that $(X,Y)$ is a split of $G\setminus a$.
  Since $\rho_G(X\cup \{a\})\le 2$
  and $\rho_G(X)\le 2$,
  we have $\abs{X}\le 2$ or $\abs{Y}\le 2$.
  We may assume that $\abs{X}=2$.
  Since $X\neq \{b,c\}$, we may assume that $b\in Y$.

  If $c\in X$, then
  $\rho_{G\setminus b}(X\cup\{a\})
    +\rho_G(\{a,b,c\})
    \ge
    \rho_{G\setminus b}(\{a,c\})+\rho_G(X\cup\{a,b\})$ by Lemma~\ref{lem:gga}.
  Since $G$ is \rc{3}{1} and 
  $\min(\abs{X\cup \{a,b\}},\abs{Y-\{b\}})\ge 4$,
  we have 
  $\rho_{G}(X\cup\{a,b\})\ge 3$.
  Therefore
  $\rho_{G\setminus b}(X\cup\{a\})\ge 3$.
  We obtain a contradiction from Lemma~\ref{lem:gagb} as follows:
  \[
    \rho_{G\setminus a}(X)+\rho_{G\setminus b}(\{a,c\})=3
    \ge \rho_{G\setminus a}(\{c\})+\rho_{G\setminus b}(X\cup\{a\})\ge 4  .
  \]

  Now we assume that $c\notin X$.
  Then
  $\rho_{G\setminus a}(Y)+\rho_G(\{a,b,c\})
    \ge \rho_{G\setminus a}(\{b,c\})+\rho_G(Y\cup \{a\})$
  by Lemma~\ref{lem:gga}
  and therefore $\rho_{G}(Y\cup \{a\})\le 1$, contradictory to the
  assumption that $G$ is prime.
\end{proof}

\begin{lemma}\label{lem:sep}
  Let $G=(V,E)$ be a prime \rc{3}{1} graph.
  Let $\{a,b,c\}$ be a triplet of  $G$.
  Let $(X,Y)$ be a partition of $V(G\setminus a)$ such that
  $\rho_{G\setminus a}(X)=2$ and
  $\abs{X},\abs{Y}\ge 5$.
  Then $\abs{X\cap \{b,c\}}=1$
  and
  $\rho(X-\{b,c\},Y\cup\{a\})
    =\rho(X\cup\{a\},Y-\{b,c\})=3$.
\end{lemma}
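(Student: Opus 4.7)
The plan is to prove the three conclusions of the lemma in order.

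\emph{Step 1: showing $|X\cap\{b,c\}|=1$.} I would rule out $\{b,c\}\subseteq X$ by applying~\eqref{eq:gga1} with $x=a$ to the pair of sets $X,\{b,c\}\subseteq V(G)\setminus\{a\}$, which yields
\[
\rho_{G\setminus a}(\{b,c\})+\rho_G(X\cup\{a\})\le \rho_{G\setminus a}(X)+\rho_G(\{a,b,c\})=2+2,
\]
hence $\rho_G(X\cup\{a\})\le 2$; but $|X\cup\{a\}|,|Y|\ge 5$, which contradicts the \rc{3}{1}-ness of $G$. The case $\{b,c\}\subseteq Y$ is symmetric and gives $\rho_G(Y\cup\{a\})=\rho_G(X)\le 2$, the same contradiction. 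Without loss of generality, $b\in X$ and $c\in Y$.

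\emph{Step 2: auxiliary rank identities.} I would observe that $\rho_G(X)=\rho_G(X\cup\{a\})=3$. Adding to $\A_G[X,Y]$ (of rank $\rho_{G\setminus a}(X)=2$) either the column or the row indexed by $a$ increases the rank by at most $1$, so each of $\rho_G(X)$ and $\rho_G(X\cup\{a\})$ is at most $3$; the matching lower bound of $3$ in each case follows from the \rc{3}{1}-ness of $G$ because the two sides of each corresponding bipartition have size at least $4$.

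\emph{Step 3: extracting a dependency from the triplet.} The main structural input is that the submatrix $\A_G[\{a,b,c\},V\setminus\{a,b,c\}]$ has rank $\rho_G(\{a,b,c\})=2$ while every pair of its rows is independent, because $\rho_{G\setminus x}(\{a,b,c\}\setminus\{x\})=2$ for each $x\in\{a,b,c\}$ by the triplet hypothesis. A brief case analysis over the seven nonzero dependencies in $\mathrm{GF}(2)^{\{a,b,c\}}$ forces the three rows $r_a,r_b,r_c$ to satisfy $r_a+r_b+r_c=0$; by symmetry of $\A_G$ this is equivalent to the column relation that, for every $S\subseteq V\setminus\{a,b,c\}$, the $a$-, $b$-, and $c$-columns of $\A_G[S,\{a,b,c\}]$ sum to $0$.

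\emph{Step 4: finishing both rank equalities.} I would apply this column relation with $S=X\setminus\{b\}\subseteq V\setminus\{a,b,c\}$: in $\A_G[X\setminus\{b\},V\setminus(X\setminus\{b\})]=\A_G[X\setminus\{b\},Y\cup\{a,b\}]$ all three columns indexed by $a$, $b$, and $c$ are present (since $c\in Y$), so the $b$-column equals the sum of the $a$- and $c$-columns and is redundant. Hence $\rho_{G\setminus b}(X\setminus\{b\})=\rho_G(X\setminus\{b\})$, which is at least $3$ by the \rc{3}{1}-ness of $G$ (both sides of the corresponding bipartition have size at least $4$) and at most $\rho_G(X)=3$ because $\A_G[X\setminus\{b\},Y\cup\{a\}]$ is a submatrix of $\A_G[X,Y\cup\{a\}]$. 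Applying the relation with $S=Y\setminus\{c\}$ shows analogously, in $\A_G[Y\setminus\{c\},X\cup\{a,c\}]$, that the $c$-column is the sum of the $a$- and $b$-columns and is redundant, giving $\rho_{G\setminus c}(Y\setminus\{c\})=\rho_G(Y\setminus\{c\})\ge 3$; the matching upper bound is $\rho_G(Y)=\rho_G(X\cup\{a\})=3$.

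The main obstacle is isolating the dependency $r_a+r_b+r_c=0$ from the triplet hypotheses; once that is in hand, the two rank equalities fall out from column redundancy combined with the \rc{3}{1}-ness of $G$.
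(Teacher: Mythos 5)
Your proof is correct and is essentially the paper's argument: both rule out $\{b,c\}\subseteq X$ and $\{b,c\}\subseteq Y$ via Lemma~\ref{lem:gga} exactly as you do, and both then exploit that for each $x\in\{a,b,c\}$ the row (equivalently, by symmetry of $\A_G$, the column) indexed by $x$, restricted to $V\setminus\{a,b,c\}$, lies in the span of the other two, so that the third triplet vertex contributes a redundant row/column in the relevant submatrix, after which the bounds $3\le\rho\le 3$ follow from \rc{3}{1}{}ness and a submatrix comparison. The one small inefficiency is that your Step~3 case analysis pinning down the exact dependency $r_a+r_b+r_c=0$ is more than Step~4 actually requires: the two hypotheses $\rho_G(\{a,b,c\})=2$ and $\rho_{G\setminus x}(\{a,b,c\}\setminus\{x\})=2$ already say directly that $\rho(\{a,b,c\},S)=\rho(\{a,b,c\}\setminus\{x\},S)$ for every $S\subseteq V\setminus\{a,b,c\}$, which is the spanning statement the paper uses and the only thing your column-redundancy argument needs.
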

\begin{proof}
  If $\{b,c\}\subseteq X$, then
  $\rho_{G\setminus a}(X)+\rho_{G}(\{a,b,c\})
    \ge \rho_{G\setminus a}(\{b,c\})+\rho_G(X\cup \{a\})
  $ by Lemma~\ref{lem:gga}
  and therefore
  $\rho_G(X\cup\{a\})\le 2$, contradictory to the assumption that
  $G$ is \rc{3}{1}.
  Therefore $\{b,c\}\not\subseteq X$.
  Similarly $\{b,c\}\not\subseteq Y$ by the symmetry between $X$ and $Y$.
  We conclude that $\abs{X\cap \{b,c\}}=1$. Then
  $\rho_G(X\cup\{a,b,c\})\ge 3$ because $G$ is \rc{3}{1}
  and $\abs{Y-\{b,c\}}\ge 4$.
  By symmetry, we may assume that $b\in X$ and $c\in Y$.
  Since $\rho(\{a,b,c\},Y-\{c\})=\rho(\{a,b\},Y-\{c\})$, 
  we have $\rho(X\cup\{a\},Y-\{c\})=\rho(X\cup\{a,c\},Y-\{c\})\ge 3$,
  and therefore
  $\rho(X\cup\{a\},Y- \{c\})=3$ as $\rho(X\cup \{a\},Y-\{c\})\le 
  \rho(X,Y-\{c\})+1\le \rho(X,Y)+1= 3$. 
  By symmetry between $X$ and $Y$,
  we have $\rho(X- \{b\},Y\cup\{a\})=3$.
\end{proof}

\section{Prime \rc{3}{1} graphs}
\label{sec:internally4prime}
To prove the remaining case (2) of Theorem~\ref{thm:main},
we will prove the following.
\begin{proposition}\label{prop:internally4prime}
  Let $G=(V,E)$ be a   prime \rc{3}{1} graph
  with at least $8$ vertices
  and a triplet  $\{a,b,c\}$.
  Then
  \begin{enumerate}[(i)]
    \item at least one of $G\setminus a$, $G\setminus b$, and $G\setminus c$
          is prime \rc{3}{2},
          or
    \item at least two of $G\setminus a$, $G\setminus b$, and $G\setminus
            c$    are prime \rc{3}{3}.
  \end{enumerate}
\end{proposition}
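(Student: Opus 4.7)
The plan is to argue by contradiction. By Lemma~\ref{lem:primetriplet}, each of $G\setminus a$, $G\setminus b$, $G\setminus c$ is prime, so conclusions (i) and (ii) reduce to pure rank-connectivity statements. Assume both fail. Then each $v\in\{a,b,c\}$ admits a witness partition $(X_v,Y_v)$ of $V-\{v\}$ with $\rho_{G\setminus v}(X_v)=2$ and $\min(|X_v|,|Y_v|)\ge 5$; and after relabeling I may assume that the witnesses for $v=a$ and $v=b$ additionally satisfy $\min(|X_v|,|Y_v|)\ge 6$.

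Lemma~\ref{lem:sep} applies to each witness and provides exactly the tools I need. First, exactly one of the other two triplet vertices lies in $X_v$. Second, Lemma~\ref{lem:sep} gives the rank refinements $\rho(X_v-\{t_1\},Y_v\cup\{v\})=\rho(X_v\cup\{v\},Y_v-\{t_2\})=3$, where $t_1\in X_v$ and $t_2\in Y_v$ are the other two triplet vertices; together with $\rho_{G\setminus v}(X_v)=2$ these force $\rho_G(X_v\cup\{v\})=3$. After swapping $X_v\leftrightarrow Y_v$ as needed I may place $b\in X_a$ and $a\in X_b$, which drives $c$ into $Y_a\cap Y_b$. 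The residual $a\leftrightarrow b$ symmetry collapses the two possible placements of $\{a,b\}$ across $(X_c,Y_c)$ into a single configuration.

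The main submodular move is to apply inequality~(\ref{eq:gagb3}) of Lemma~\ref{lem:gagb} with $A=X_a$ and $B=X_b$, which yields
\[\rho_{G\setminus a\setminus b}(X_a\cap X_b)+\rho_G(X_a\cup X_b)\le \rho_{G\setminus a}(X_a)+\rho_{G\setminus b}(X_b)=4.\]
The complement of $X_a\cup X_b$ in $V$ equals $Y_a\cap Y_b$, which contains $c$. When $|Y_a\cap Y_b|\ge 4$ the \rc{3}{1}{} hypothesis gives $\rho_G(X_a\cup X_b)\ge 3$, and hence $\rho_{G\setminus a\setminus b}(X_a\cap X_b)\le 1$; I then combine this with the third witness $(X_c,Y_c)$ through a further submodular step---one of the variants of Lemmas~\ref{lem:gagb} and~\ref{lem:gga}, chosen according to the arrangement of $\{a,b,c\}$ and making essential use of the rank refinements from Lemma~\ref{lem:sep} to reinstate the columns $a$ and $b$---to obtain a set $S\subseteq V$ with $\rho_G(S)\le 2$ and $\min(|S|,|V-S|)\ge 4$, contradicting \rc{3}{1}{}. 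When $|Y_a\cap Y_b|\le 3$, the size constraints $|X_a|,|X_b|,|Y_a|,|Y_b|\ge 6$ force the crossed quadrants $X_a\cap Y_b$ and $Y_a\cap X_b$ to be large, and the analogous argument applied via inequality~(\ref{eq:gagb2}) to the pair $(X_a,Y_b)$ runs to the same contradiction. The principal obstacle is this case-dependent bookkeeping: at each step I must choose the right refinement of Lemma~\ref{lem:sep} to bridge the submodular inequality, which lives inside $V-\{a,b\}$, back to a forbidden $2$-separation of the full graph $G$.
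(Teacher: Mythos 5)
Your setup is sound as far as it goes: by Lemma~\ref{lem:primetriplet} the three deletions are prime, the negation of the conclusion gives witness partitions with $\min\ge 5$ for all three vertices and $\min\ge 6$ for two of them, Lemma~\ref{lem:sep} correctly places the triplet vertices and gives $\rho_G(X_v\cup\{v\})=3$, and the application of \eqref{eq:gagb3} to $X_a\ni b$ and $X_b\ni a$ does yield $\rho_{G\setminus a\setminus b}(X_a\cap X_b)+\rho_G(X_a\cup X_b)\le 4$, hence $\rho_{G\setminus a\setminus b}(X_a\cap X_b)\le 1$ when $\abs{Y_a\cap Y_b}\ge 4$. This reproduces the content of Lemma~\ref{lem:sep2} of the paper. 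But the step after that is where the entire difficulty lives, and your sketch of it is not correct as stated.

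From $\rho_{G\setminus a\setminus b}(X_a\cap X_b)\le 1$ one can (using the triplet relations, as in Lemma~\ref{lem:sep2}\eqref{enum:sep2-4}) recover $\rho_G(X_a\cap X_b)\le 2$, but the \rc{3}{1}{} hypothesis then gives only the \emph{size bound} $\abs{X_a\cap X_b}\le 3$ --- not a forbidden $2$-separation. Nothing forces $X_a\cap X_b$ (or any other single quadrant) to have at least $4$ elements: $\abs{X_a}\ge 6$ only gives $\abs{X_a\cap X_b}+\abs{X_a\cap Y_b}\ge 5$, and the generic outcome is that all cross-quadrants have size $2$ or $3$. That regime is precisely where the proposition's conclusion (ii) genuinely occurs, so no contradiction with \rc{3}{1}{} is available there; one must instead \emph{prove} that two of the deletions are \rc{3}{3}. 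The paper does this by (a) choosing the three witness partitions to maximize $\min$ of the two sides for each deletion (the ``barrier''), without which a single small-sided partition of $G\setminus c$ tells you nothing about whether $G\setminus c$ is \rc{3}{3}; and (b) a full analysis of the nine cells of the Venn diagram of all three partitions, including the triple intersections $A_b\cap B_c\cap C_a$ and $A_c\cap C_b\cap B_a$, spread over Lemmas~\ref{lem:sepequal}--\ref{lem:extreme2}. Your argument never brings the third partition $(X_c,Y_c)$ into play concretely, has no substitute for the maximality device, and compresses what is an extended case analysis into an unspecified ``further submodular step.'' The proposal is therefore a plausible opening but stops short of the actual proof; the two-case split on $\abs{Y_a\cap Y_b}$ is far too coarse to close the argument.
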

In the remaining of this section, we are going to prove Proposition~\ref{prop:internally4prime}.
We follow the outline of the proof of Hall~\cite[Theorem 3.1]{Hall2005} on internally $4$-connected matroids.
Let $G=(V,E)$ be a prime \rc{3}{1}{} graph with a triplet $\{a,b,c\}$ that does not satisfy (i).
By Lemma~\ref{lem:primetriplet}, all of $G\setminus a$, $G\setminus b$, and $G\setminus c$ are prime. 
So there exist partitions $(A_b,A_c)$, $(B_a,B_c)$, and $(C_a,C_b)$ of
$V(G\setminus a)$, $V(G\setminus b)$, $V(G\setminus c)$, respectively,
such that
\begin{enumerate}[(B1)]
  \item $\rho(A_b,A_c)=\rho(B_a,B_c)=\rho(C_a,C_b)=2$,
  \item $\abs{A_b}, \abs{A_c}, \abs{B_a}, \abs{B_c}, \abs{C_a}, \abs{C_b}\ge 5$,
  \item $b\in A_b$, $c\in A_c$, $a\in B_a$, $c\in B_c$, $a\in C_a$, and $b\in C_b$.
\end{enumerate}
Let us call $((A_b,A_c),(B_a,B_c),(C_a,C_b))$ a \emph{barrier} for a graph $G$ with a triplet $\{a,b,c\}$ 
if 
$(A_b,A_c)$, $(B_a,B_c)$, and $(C_a,C_b)$ are partitions of $V(G\setminus a)$, $V(G\setminus b)$, $V(G\setminus c)$, respectively 
satisfying (B1), (B2), and (B3).
\begin{lemma}\label{lem:sepequal}
  Let $((A_b,A_c),(B_a,B_c),(C_a,C_b))$ be a barrier of a prime \rc{3}{1} graph~$G$ with a triplet $\{a,b,c\}$. 
  Then the following hold.
  \begin{enumerate}[(i)]
    \item $\rho(A_b\cap B_c,A_c\cup B_a- \{a\})
            =\rho(A_b\cap B_c)$.
    \item \label{enum:sepequal2} $\rho(A_c\cap B_c,A_b\cup B_a-\{a,b\})
            =\rho(A_c\cap B_c)$.
    \item $\rho(B_a\cap A_c,A_b\cup B_c-\{b\})
            =\rho(B_a\cap A_c)$.
  \end{enumerate}
\end{lemma}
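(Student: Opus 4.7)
Write $P = A_b \cap B_a$, $Q = A_b \cap B_c$, $R = A_c \cap B_a$, and $S = A_c \cap B_c$. Since $(A_b, A_c)$ partitions $V \setminus \{a\}$ and $(B_a, B_c)$ partitions $V \setminus \{b\}$, these four sets partition $V \setminus \{a, b\}$, and the three identities of the lemma rewrite as
\[
\rho_G(Q) = \rho(Q, P \cup R \cup S), \quad
\rho_G(S) = \rho(S, P \cup Q \cup R), \quad
\rho_G(R) = \rho(R, P \cup Q \cup S),
\]
where monotonicity supplies the easy direction $\le$ in each case, so only the $\ge$ direction requires argument. The relabeling $a \leftrightarrow b$ interchanges the two barrier partitions and swaps the pieces $Q \leftrightarrow R$ while fixing $P$ and $S$, so (iii) will follow from (i) by symmetry. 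My plan is to prove (i) in detail and handle (ii) by a closely parallel argument.

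My first preparation will be to upgrade the barrier's rank-$2$ bounds to cut-rank $3$ in $G$. Since $\rho_{G \setminus a}(A_b) = \rho(A_b, A_c) = 2$ and $G$ is \rc{3}{1}{} with $\abs{A_b}, \abs{V \setminus A_b} \ge 4$, the $a$-column contributes exactly one and $\rho_G(A_b) = 3$; the same reasoning delivers $\rho_G(A_c) = \rho_G(B_a) = \rho_G(B_c) = 3$. I will then apply Lemma~\ref{lem:sep} to each of $(A_b, A_c)$ and $(B_a, B_c)$ to extract the refined rank-$3$ equalities $\rho(P \cup Q, R \cup S \cup \{a\}) = \rho(P \cup R, Q \cup S \cup \{b\}) = 3$, together with their companions obtained by moving the missing vertex to the other side of the bipartition.

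The heart of the argument for (i) will be an application of Proposition~\ref{prop:submodular} to $\A_G$ with $X_1 = Q$, $X_2 = B_c$, $Y_1 = P \cup R \cup S$, and $Y_2 = V \setminus B_c$. These choices give $X_1 \cap X_2 = Q$, $X_1 \cup X_2 = B_c$, $Y_1 \cup Y_2 = V \setminus Q$, and $Y_1 \cap Y_2 = P \cup R$, so the proposition yields
\[
\rho(Q, P \cup R \cup S) + 3 \ge \rho_G(Q) + \rho(B_c, P \cup R).
\]
I will combine this with the parallel bound obtained by replacing $B_c$ with $A_b$ in the submodular call, together with the refined equalities from Lemma~\ref{lem:sep}, to close the one-unit slack and conclude $\rho(Q, P \cup R \cup S) \ge \rho_G(Q)$. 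For (ii), I plan to invoke Lemma~\ref{lem:gagb}~\eqref{eq:gagb1} with $A = A_c$ and $B = B_c$, which delivers $\rho_G(S) + \rho(P, Q \cup R \cup S) \le 4$, and then use the refined Lemma~\ref{lem:sep} equalities once more to pin $\rho(P, Q \cup R \cup S)$ down tightly and force the desired equality.

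The principal obstacle I anticipate is the tight cancellation of the submodular slack: a single submodularity application typically loses a unit of rank, and pinning down the desired equality rather than an off-by-one bound requires a careful combination of the $(A_b, A_c)$- and $(B_a, B_c)$-partition applications with the rank-$3$ refinements. If a boundary configuration defeats the direct submodular bound (for example when $\abs{S}=1$ so that $S = \{c\}$), I expect to rule it out using the primeness of $G \setminus a$ and $G \setminus b$ guaranteed by Lemma~\ref{lem:primetriplet}, combined with the triplet condition on $\{a, b, c\}$ to forbid the pathological case in which one of $a, b, c$ has its neighborhood confined to the other two.
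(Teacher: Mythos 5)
The setup, the symmetry reduction of (iii) to (i), the facts $\rho_G(A_b)=\rho_G(A_c)=\rho_G(B_a)=\rho_G(B_c)=3$, and the Lemma~\ref{lem:sep} refinements are all correct, but the central step of your argument for (i) does not work and the anticipated ``one-unit slack'' cannot be closed the way you propose. In your submodular instance with $X_2=B_c$, $Y_2=V-B_c$, the term $\rho(B_c,Y_1\cap Y_2)=\rho(B_c,P\cup R)=\rho(B_c,B_a-\{a\})\le\rho(B_c,B_a)=2$, while $\rho(B_c,Y_2)=\rho_G(B_c)=3$; so this instance is intrinsically lossy and yields only $\rho(Q,P\cup R\cup S)\ge\rho_G(Q)-1$. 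The ``parallel bound'' with $A_b$ in place of $B_c$ is lossy in exactly the same way ($\rho(A_b,R\cup S)=\rho(A_b,A_c)=2$ versus $\rho_G(A_b)=3$), and in addition its term $\rho(Q,Y_1\cup Y_2)$ is $\rho(Q,V-Q-\{b\})$ rather than $\rho_G(Q)$; no combination of two inequalities each short by one recovers the required equality, and the rank-$3$ refinements never enter these instances. The plan for (ii) has a separate problem: \eqref{eq:gagb1} with $A=A_c$, $B=B_c$ bounds $\rho_G(S)+\rho(P,Q\cup R\cup S)$, i.e.\ it controls the cut-rank of $P=A_b\cap B_a$ in $G\setminus a\setminus b$ (this is essentially Lemma~\ref{lem:sep2}), which is a different submatrix from $\A_G[S,P\cup Q\cup R]$ and gives no lower bound on $\rho(S,P\cup Q\cup R)$.

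What is actually needed — and what the paper does — is a two-step column-redundancy argument rather than a cut-versus-cut submodular bound. First, $\rho(B_c,B_a-\{a\})=\rho(B_c,B_a)=2$ (your Lemma~\ref{lem:sep} refinement $\rho(B_a-\{a\},B_c\cup\{b\})=3$ forces $\rho(B_a-\{a\},B_c)\ge 2$), so the column of $a$ restricted to rows $Q\subseteq B_c$ already lies in the span of the columns $P\cup R=B_a-\{a\}$, giving $\rho(Q,P\cup R\cup S)=\rho(Q,P\cup R\cup S\cup\{a\})$. Second, the triplet identity $\rho(\{a,c\},V-\{a,b,c\})=\rho(\{a,b,c\},V-\{a,b,c\})$ makes the column of $b$, restricted to rows $Q\subseteq V-\{a,b,c\}$, a combination of the columns of $a$ and of $c\in S$, giving $\rho(Q,P\cup R\cup S\cup\{a\})=\rho(Q,V-Q)=\rho_G(Q)$. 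If you want to phrase these steps submodularly, the correct instances take the \emph{small} side as $Y_2$: namely $X_2=B_c$, $Y_2=B_a$ for the first step and $X_2=V-\{a,b,c\}$, $Y_2=\{a,b,c\}$ for the second; in each, the two ``extra'' terms are both equal to $2$, so nothing is lost. The same two redundancies, together with $\rho(A_c,A_b-\{b\})=\rho(A_c,A_b)$, dispose of (ii) and (iii).
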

\begin{proof}
  Let $V=V(G)$. Then, 
  \begin{align*}
    \lefteqn{\rho(A_b\cap B_c,A_c\cup B_a- \{a\})}                                                         \\
     & =\rho(A_b\cap B_c,A_c\cup B_a)
     &                                          & \text{because }\rho(B_c,B_a- \{a\})=\rho(B_c,B_a),       \\
     & =\rho(A_b\cap B_c,A_c\cup B_a\cup\{b\})
     &                                          & \text{because }\rho(\{a,c\},V-\{a,b,c\})=\rho(\{a,b,c\},
    V-\{a,b,c\}),                                                                                         \\
    \lefteqn{\rho(A_c\cap B_c,A_b\cup B_a-\{a,b\})}.                                                       \\
     & =\rho(A_c\cap B_c,A_b\cup B_a-\{b\})
     &                                          & \text{because }\rho(B_c,B_a- a)=\rho(B_c,B_a),           \\
     & =\rho(A_c\cap B_c,A_b\cup B_a)
     &                                          & \text{because }\rho(A_c,A_b- b)=\rho(A_c,A_b),            \\
    \lefteqn{\rho(B_a\cap A_c,A_b\cup B_c-\{b\})}                                                          \\
     & =\rho(B_a\cap A_c,A_b\cup B_c)
     &                                          & \text{because }\rho(A_c,A_b- \{b\})=\rho(A_c,A_b),       \\
     & =\rho(B_a\cap A_c,A_b\cup B_c\cup \{a\})
     &                                          & \text{because }\rho(\{b,c\},V-\{a,b,c\})=\rho(\{a,b,c\},
    V-\{a,b,c\}).
    \qedhere
  \end{align*}
\end{proof}
\begin{lemma}\label{lem:sep2}
  Let $((A_b,A_c),(B_a,B_c),(C_a,C_b))$ be a barrier of a prime \rc{3}{1} graph~$G$ with a triplet $\{a,b,c\}$. Then the following hold.
  \begin{enumerate}[(i)]
    \item \label{enum:sep2-1}
          If $\abs{A_b\cap B_c}\ge 2$, then
          $\rho(B_a\cap A_c)\le 2$
          and $\abs{B_a\cap A_c}\le 3$.
    \item \label{enum:sep2-3}
          If $\rho(A_b\cap B_a,A_c\cup B_c)\ge 2$,
          then $\rho(A_c\cap B_c)\le 2$ and $\abs{A_c\cap B_c}\le 3$.
    \item \label{enum:sep2-4}
          If $\rho(A_b\cap B_a,A_c\cup B_c)\le 1$, then
          $\rho(A_b\cap B_a)\le 2$ and $\abs{A_b\cap B_a}\le 3$.
  \end{enumerate}
\end{lemma}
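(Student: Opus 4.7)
The plan is to handle (i)--(iii) with the submodular tools already established (Proposition~\ref{prop:submodular} and Lemma~\ref{lem:gagb}) and to pass from rank bounds to size bounds via the \rc{3}{1}ness of $G$: in each part the complement of the relevant set contains $\{a,b\}$ together with all of $A_b$ or $A_c$, hence at least $7$ vertices, so a bound $\rho_G(S)\le 2$ immediately forces $\abs{S}\le 3$.

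For (i), apply~\eqref{eq:gagb2} of Lemma~\ref{lem:gagb} with $A=A_b$ and $B=B_c$ (valid since $b\in A_b$ and $a\notin B_c$) to obtain $\rho_{G\setminus b}(A_b\cap B_c)+\rho_{G\setminus a}(A_b\cup B_c)\le 4$. Lemma~\ref{lem:sepequal}(i) identifies the first term with $\rho_G(A_b\cap B_c)$ and Lemma~\ref{lem:sepequal}(iii) identifies the second with $\rho_G(B_a\cap A_c)$. Since $\abs{A_b\cap B_c}\ge 2$ and its complement has at least $6$ vertices, primeness of $G$ gives $\rho_G(A_b\cap B_c)\ge 2$ and hence $\rho_G(B_a\cap A_c)\le 2$.

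For (ii), apply~\eqref{eq:gagb3} of Lemma~\ref{lem:gagb} with $A=A_b$ and $B=B_a$. Direct computation shows that $A\cap B=A_b\cap B_a$, $A\cup B=V\setminus(A_c\cap B_c)$, $\rho_{G\setminus a\setminus b}(A_b\cap B_a)=\rho(A_b\cap B_a,A_c\cup B_c)$, and $\rho_G(A_b\cup B_a)=\rho_G(A_c\cap B_c)$. The inequality collapses to
\[
\rho(A_b\cap B_a,A_c\cup B_c)+\rho_G(A_c\cap B_c)\le 4,
\]
so the hypothesis $\rho(A_b\cap B_a,A_c\cup B_c)\ge 2$ immediately yields $\rho_G(A_c\cap B_c)\le 2$.

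Part (iii) is the main obstacle, since the same submodular style alone would only bound $\rho_G(A_b\cap B_a)$ by $3$, and one must exploit the triplet structure to shave off the last unit. Write $X=A_b\cap B_a\subseteq V\setminus\{a,b,c\}$ and apply Proposition~\ref{prop:submodular} to $\A_G$ with rows $X$ and column sets $\{a,b,c\}$ and $A_c\cup B_c$; their union is $V\setminus X$ and their intersection is $\{c\}$, so
\[
\rho(X,\{a,b,c\})+\rho(X,A_c\cup B_c)\ge \rho_G(X)+\rho(X,\{c\}).
\]
Since $\A_G$ is symmetric, $\rho(X,\{a,b,c\})\le \rho_G(\{a,b,c\})=2$, and the hypothesis bounds the second left-hand term by $1$, giving $\rho_G(X)\le 3-\rho(X,\{c\})$. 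If $c$ has a neighbor in $X$ we are done; otherwise the triplet condition makes the rows $a,b,c$ of $\A_G$ restricted to $V\setminus\{a,b,c\}$ into three pairwise independent nonzero vectors in a $2$-dimensional $\mathrm{GF}(2)$-space, which forces $\A_G[a,v]+\A_G[b,v]+\A_G[c,v]=0$ for every $v\in V\setminus\{a,b,c\}$. Together with $\A_G[c,v]=0$ on $X$ this shows that columns $a$ and $b$ of $\A_G$ agree on $X$, so $\rho(X,\{a,b\})\le 1$, and a second application of Proposition~\ref{prop:submodular} with column sets $\{a,b\}$ and $A_c\cup B_c$ gives $\rho_G(X)\le 1+1=2$.
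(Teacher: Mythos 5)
Your proposal is correct. Parts (i) and (ii) are essentially the paper's argument repackaged: the paper invokes Proposition~\ref{prop:submodular} directly on $\A_G$ (with row sets $A_b-\{b\}$, $B_c$ and column sets $A_c$, $B_a$ for (i), and $A_b-\{b\}$, $B_a-\{a\}$ with $A_c$, $B_c$ for (ii)) and then applies Lemma~\ref{lem:sepequal}, whereas you route through~\eqref{eq:gagb2} and~\eqref{eq:gagb3} of Lemma~\ref{lem:gagb}; since that lemma is itself a corollary of Proposition~\ref{prop:submodular}, you obtain the same inequalities, and your appeal to Lemma~\ref{lem:sepequal}(i) and (iii) is justified by the sandwich $\rho(A_b\cap B_c,A_c\cup B_a-\{a\})\le \rho(A_b\cap B_c,A_c\cup B_a)\le \rho_G(A_b\cap B_c)$ (and its analog), which collapses to equalities once Lemma~\ref{lem:sepequal} says the two ends agree.

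Part (iii) is where you genuinely diverge. The paper applies Proposition~\ref{prop:submodular} once, with rows $A_b\cap B_a$ and $V-\{a,b,c\}$ and columns $A_c\cup B_c\cup\{a\}$ and $\{a,b,c\}$; after noting $\rho(A_b\cap B_a,A_c\cup B_c\cup\{a\})\le 2$ and using $\rho(V-\{a,b,c\},\{a,c\})=2$ (which is exactly the triplet hypothesis for $b$), it deduces $\rho_G(A_b\cap B_a)\le 2$ in one stroke. You instead first obtain $\rho_G(X)\le 3-\rho(X,\{c\})$ and then do a case split: if $c$ has a neighbor in $X$ you are done; otherwise you use the observation that the three rows of $\A_G$ indexed by $a,b,c$, restricted to $V-\{a,b,c\}$, are the three distinct nonzero vectors of a $2$-dimensional $\mathrm{GF}(2)$ space and so sum to zero, which together with $\A_G[c,\cdot]=0$ on $X$ forces columns $a$ and $b$ of $\A_G$ to agree on rows $X$, hence $\rho(X,\{a,b\})\le 1$, and a second submodular application closes the argument. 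Your route is a bit longer and needs the extra case analysis, but it makes visible exactly where the triplet condition is consumed (it enters as the sum-to-zero identity over $\mathrm{GF}(2)$), whereas the paper's version folds it into the single term $\rho(V-\{a,b,c\},\{a,c\})=2$. Both are valid. One minor slip worth noting: for parts (i) and (ii) the complement of the relevant set has at least $6$ vertices (not $7$ as you wrote), since the complement is $A_b\cup B_c\cup\{a\}\supseteq A_b\cup\{a\}$ resp.\ $A_b\cup B_a\supseteq A_b\cup\{a\}$; this is still more than $3$, so the size bound $\abs{S}\le 3$ under \rc{3}{1}ness goes through unchanged.
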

\begin{proof}
  \eqref{enum:sep2-1}
  Since $G$ is prime, $\rho(A_b\cap B_c)\ge 2$
  and by Lemma~\ref{lem:sepequal},
  $\rho(A_b\cap B_c,A_c\cup B_a-\{a\})\ge 2$.
  By the submodular inequality,
  \[
    \rho(A_b- \{b\},A_c)+
    \rho(B_c,B_a)
    \ge \rho(A_b\cap B_c,A_c\cup B_a)
    +\rho(A_b\cup B_c-\{b\}, B_a\cap A_c).\]
  Since $\rho(A_b-\{b\},A_c)\le \rho(A_b,A_c)= 2$ and $\rho(A_b\cap B_c,A_c\cup B_a)\ge
    \rho(A_b\cap B_c,A_c\cup B_a-\{a\})\ge 2$,
  we deduce that
  $\rho(B_a\cap A_c,A_b\cup B_c- \{b\})\le 2$.
  By Lemma~\ref{lem:sepequal},
  $\rho(B_a\cap A_c)\le 2$ and $\abs{B_a\cap A_c}\le 3$ because $G$ is
  \rc{3}{1}.

  \eqref{enum:sep2-3}
  By the submodular inequality,
  \[
    \rho(A_b- \{b\}, A_c)
    +\rho(B_a-\{a\},B_c)
    \ge \rho(A_b\cap B_a,A_c\cup B_c)
    +\rho(A_b\cup B_a-\{a,b\},A_c\cap B_c).\]
  Since $\rho(A_b,A_c)=\rho(B_a,B_c)=2$ and
  $\rho(A_c\cap B_c,A_b\cup B_a-\{a,b\})=\rho(A_c\cap B_c)$,
  we deduce that
  $\rho(A_c\cap B_c)\le 2$.
  Since $G$ is \rc{3}{1}, $\abs{A_c\cap B_c}\le 3$.

  \eqref{enum:sep2-4}
  From the assumption,
  $ \rho(A_b\cap B_a,A_c\cup B_c\cup \{a\})\le 2$.
  Then
  $
    \rho(A_b\cap B_a,A_c\cup B_c\cup\{a\})+
    \rho(V-\{a,b,c\},\{a,b,c\})
    \ge \rho(A_b\cap B_a,A_c\cup B_c\cup\{a,b\})
    +\rho(V- \{a,b,c\}, \{a,c\})$
  and we deduce that
  $\rho(A_b\cap B_a)\le 2$. Since $G$ is \rc{3}{1},
  $\abs{A_b\cap B_a}\le 3$.
\end{proof}

\begin{lemma}\label{lem:abba}
  Let $((A_b,A_c),(B_a,B_c),(C_a,C_b))$ be a barrier of a prime \rc{3}{1} graph~$G$ with a triplet $\{a,b,c\}$. 
  \begin{enumerate}[(i)]
    \item 
    If $\abs{A_b\cap B_a}>2$ and $\abs{A_c\cap B_c}>3$,
    then
    $G\setminus c$ is prime \rc{3}{3}.
    \item If $\abs{A_c\cap C_a}>2$ and $\abs{A_b\cap C_b}>3$, 
    then
    $G\setminus b$ is prime \rc{3}{3}.
    \item If $\abs{B_c\cap C_b}>2$ and $\abs{B_a\cap C_a}>3$,
    then
    $G\setminus a$ is prime \rc{3}{3}.
  \end{enumerate}
\end{lemma}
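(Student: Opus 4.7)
By the cyclic symmetry of a barrier under the permutation $a\mapsto b\mapsto c\mapsto a$ (which sends $A_b\mapsto B_c$, $A_c\mapsto B_a$, $B_a\mapsto C_b$, $B_c\mapsto C_a$, $C_a\mapsto A_b$, $C_b\mapsto A_c$), it suffices to prove (i). Since $G\setminus c$ is already prime by Lemma~\ref{lem:primetriplet}, the only remaining task is to verify that $G\setminus c$ is \rc{3}{3}{}. Suppose for contradiction that $(X,Y)$ is a partition of $V(G\setminus c)$ with $\rho_{G\setminus c}(X)=2$ and $\abs{X},\abs{Y}\ge 6$. Applying the analog of Lemma~\ref{lem:sep} obtained by swapping the roles of $a$ and $c$ (legitimate because a triplet is symmetric in its three vertices), I obtain $\abs{X\cap\{a,b\}}=1$, so after relabeling assume $a\in X$ and $b\in Y$. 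Because $G$ is \rc{3}{1}{}, we additionally have $\rho_G(X\cup\{c\})=\rho_G(Y\cup\{c\})=3$ and $\rho_G(A_c)=\rho_G(B_c)=3$.

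The central computation will be a double application of Proposition~\ref{prop:submodular} to $\A_G$: first pairing $(X\cup\{c\},Y)$ with $(A_b\cup\{a\},A_c)$, then pairing $(Y\cup\{c\},X)$ with $(B_a\cup\{b\},B_c)$. Tracking where each of $a,b,c$ lands in the intersections and unions and identifying the complementary sides, both inequalities should collapse to
\begin{align*}
  6 &\ge \rho_G\bigl((A_b\cap X)\cup\{a\}\bigr)+\rho_G(A_c\cap Y),\\
  6 &\ge \rho_G\bigl((B_a\cap Y)\cup\{b\}\bigr)+\rho_G(B_c\cap X).
\end{align*}
Each summand lies in $[0,3]$; whenever one drops to $\le 2$, the \rc{3}{1}{}-ness of $G$ forces the smaller side of that cut to have size at most $3$, producing size constraints on the four cells $A_b\cap X$, $A_c\cap Y$, $B_a\cap Y$, $B_c\cap X$.

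Combining these size bounds with the disjoint decomposition $\abs{V}=\abs{A_b\cap B_a}+\abs{A_b\cap B_c}+\abs{A_c\cap B_a}+\abs{A_c\cap B_c}+2$, the barrier lower bounds $\abs{A_b},\abs{A_c},\abs{B_a},\abs{B_c}\ge 5$, the hypotheses $\abs{A_b\cap B_a}>2$ and $\abs{A_c\cap B_c}>3$, and $\abs{X},\abs{Y}\ge 6$, a numerical contradiction should emerge. The main obstacle I anticipate is the residual case in which all four ranks equal exactly $3$, so both displayed inequalities are equalities and neither yields a direct cell bound; here one further exploits $\rho_{G\setminus a\setminus b}(A_b\cap B_a)\le 1$ and $\abs{A_b\cap B_a}=3$, obtained from $\abs{A_c\cap B_c}>3$ via the contrapositives of Lemmas~\ref{lem:sep2}\eqref{enum:sep2-3} and~\eqref{enum:sep2-4}, to close the remaining branches, closely paralleling Hall's matroidal proof~\cite{Hall2005}.
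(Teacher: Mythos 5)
Your setup is sound: the symmetry reduction, primality via Lemma~\ref{lem:primetriplet}, the reduction of ``\rc{3}{3}'' to ruling out a partition $(X,Y)$ of $V(G\setminus c)$ with $\rho=2$ and $\abs{X},\abs{Y}\ge 6$, the use of Lemma~\ref{lem:sep} to place $a$ and $b$ on opposite sides, and the extraction of $\abs{A_b\cap B_a}=3$ with $\rho(A_b\cap B_a,A_c\cup B_c)\le 1$ from the contrapositives of Lemma~\ref{lem:sep2}\eqref{enum:sep2-3} and~\eqref{enum:sep2-4} are all correct and match the paper. But the ``central computation'' is where the proof has to happen, and it is not carried out. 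Your two submodular inequalities are correct as stated, yet they are vacuous precisely in the case you flag (all four ranks equal to $3$), and that case is not degenerate --- it is entirely consistent with the hypotheses --- so deferring it with ``one further exploits \dots to close the remaining branches'' leaves the whole difficulty unresolved. There is also a structural problem even outside the tight case: your pairing controls only the diagonal cells $A_b\cap X$, $A_c\cap Y$, $B_a\cap Y$, $B_c\cap X$, so to bound $\abs{X}$ you would still need $\abs{A_c\cap B_a\cap X}$ (and for $\abs{Y}$, $\abs{A_b\cap B_c\cap Y}$), which your inequalities never touch; the hypotheses of (i) do not bound $\abs{A_c\cap B_a}$ either.

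The paper's proof uses the three-element set $D=A_b\cap B_a$ in a quite different way: not to feed the $\{a,b,c\}$-free cells into a counting argument, but to \emph{move vertices across the cut} $(X,Y)$ at controlled rank cost. Since $\rho(D\cup\{b\},A_c\cup B_c)\le 1+\rho(D,A_c\cup B_c)\le 2$, submodularity against $\rho(X,Y)=2$ shows (when $D\subseteq X$, using $\rho(D,A_c\cup B_c\cup\{b\})=\rho_G(D)\ge 2$) that $\rho(X\cup\{b\},Y-\{b\})\le 2$, whence by the triplet property $\rho(X\cup\{b,c\},Y-\{b\})\le 2$ and \rc{3}{1}ness forces $\abs{Y}\le 4$; when exactly two elements of $D$ lie in $X$, two successive submodular steps first move the third element $x$ of $D$ and then $b$ into $X$, again at total cut-rank $\le 2$, forcing $\abs{Y}\le 5$. (The case $\abs{D\cap X}\le 1$ is handled by the $a\leftrightarrow b$ symmetry.) If you want to salvage your route, this ``transfer $D$ and $b$ across the cut'' step is the missing idea; as written, the proposal is an outline whose hardest case is exactly the one left open.
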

\begin{proof}
  By symmetry, it is enough to prove (i).
  By Lemma~\ref{lem:sep2}\eqref{enum:sep2-3}, $\rho(A_b\cap B_a,A_c\cup B_c)\le 1$.
  By Lemma~\ref{lem:sep2}\eqref{enum:sep2-4}, we have 
  $\rho(A_b\cap B_a)\le 2$ and $\abs{A_b\cap B_a}=3$. Since $\abs{A_b\cap B_a}=3$, we have $\rho(A_b\cap
    B_a)=2$.
  By Lemma~\ref{lem:primetriplet}, $G\setminus c$ is prime.
  To show that $G\setminus c$ is \rc{3}{3},
  it is enough to show that
  $\abs{C_a}\le 5$ or $\abs{C_b}\le 5$
  because the statement holds for all barriers $((A_b,A_c),(B_a,B_c),(C_a',C_b'))$ sharing $(A_b,A_c)$ and $(B_a,B_c)$.
  By the symmetry between $a$ and $b$,
  we have two cases to check:
  $\abs{(A_b\cap B_a)\cap C_a}=3$
  or $\abs{(A_b\cap B_a)\cap C_a}=2$.

  To check the first case, let us assume that $A_b\cap B_a\subseteq
    C_a$.
  From the submodular inequality,
  \[
    \rho((A_b\cap B_a)\cup\{b\}, A_c\cup B_c)
    +\rho(C_a,C_b)
    \ge \rho(C_a\cup \{b\},C_b- \{b\})
    + \rho(A_b\cap B_a,A_c\cup B_c\cup \{b\}).
  \]
  We know that $\rho(C_a,C_b)=2$
  and
  $\rho((A_b\cap B_a)\cup\{b\}, A_c\cup B_c)\le 1+\rho(A_b\cap
    B_a,A_c\cup B_c)\le 2$.
  Since
  $\rho(\{b,c\},V-\{a,b,c\})=\rho(\{a,b,c\},V-\{a,b,c\})$,
  we have
  $\rho(A_b\cap B_a,A_c\cup B_c\cup \{b\})=\rho(A_b\cap B_a)$.
  Since $G$ is prime, $\rho(A_b\cap B_a)\ge 2$.
  So we deduce that
  $\rho(C_a\cup \{b\},C_b-\{b\})\le 2$.
  Since
  $\rho(\{a,b\},V-\{a,b,c\})=\rho(\{a,b,c\},V-\{a,b,c\})$,
  we deduce that
  $\rho(C_a\cup\{b,c\},C_b-
    \{b\})=\rho(C_a\cup\{b\},C_b-\{b\})\le 2$.
  Since $G$ is \rc{3}{1}, we have
  $\abs{C_b- \{b\}}\le 3$. So $\abs{C_b}\le 4$.

  Now we need to check the remaining case in which $\abs{(A_b\cap B_a)\cap
      C_a}=2$.
  Let $x$ be the unique element in $A_b\cap B_a\cap C_b$.
  Let $A_b\cap B_a\cap C_a=\{y,z\}$.
  Since $x\in B_a$, $x\neq b$.
  From the submodular inequality,
  \[
    \rho(C_a,C_b)+\rho(\{x,y,z\},V-\{x,y,z\})
    \ge \rho(\{y,z\},V-\{y,z\})+
    \rho(C_a\cup\{x\}, C_b-\{x\}),
  \]
  and we deduce that $\rho(C_a\cup\{x\},C_b-\{x\})\le 2$.
  Again we use the submodular inequality, and obtain that
  \begin{multline*}
    \rho(\{x,y,z,b\},V-\{x,y,z,b\})+
    \rho(C_a\cup\{x\},C_b-\{x\})\\
    \ge
    \rho(C_a\cup\{x,b\},C_b- \{b,x\})
    +\rho(\{x,y,z\},V- \{x,y,z\}).
  \end{multline*}
  We have $\rho(\{x,y,z,b\})\le 1+\rho(A_b\cap B_a,A_c\cup B_c)=2$
  because $b\notin A_c\cup B_c$.
  So it follows that
  $\rho(C_a\cup \{x,b\},C_b-\{b,x\})\le 2$.
  Since
  $\rho(\{a,b\},V-\{a,b,c\})=\rho(\{a,b,c\},V-\{a,b,c\})$,
  we have
  $\rho(C_a\cup \{x,b,c\},C_b-\{b,x\})=
    \rho(C_a\cup \{x,b\},C_b-\{b,x\})\le 2$.
  Since $G$ is \rc{3}{1} and $\rho(C_b-\{b,x\})\le 2$,
  we have $\abs{C_b}-2\le 3$.
\end{proof}

\begin{lemma}\label{lem:meatiest}
  Let $((A_b,A_c),(B_a,B_c),(C_a,C_b))$ be a barrier of a prime \rc{3}{1} graph~$G$ with a triplet $\{a,b,c\}$
  such that 
  each of $\min(\abs{A_b},\abs{A_c})$, $\min(\abs{B_a},\abs{B_c})$, and $\min(\abs{C_a},\abs{C_b})$ is
  maximum.
  Then at least two of $G\setminus a$, $G\setminus b$, and $G\setminus c$
  are  \rc{3}{3}
  if at least one of the following inequalities holds.
  \begin{align*}
    \abs{A_b\cap B_a}&\le 1,  
    & \abs{B_c\cap C_b}&\le 1,
    & \abs{A_c\cap C_a}&\le 1, \\
    \abs{A_b\cap B_c}&\le 1,
    &\abs{B_c\cap C_a}&\le 1,
    & \abs{C_a\cap A_b}&\le 1, \\
    \abs{C_b\cap B_a}&\le 1,
    &\abs{B_a\cap A_c}&\le 1,
    & \abs{A_c\cap C_b}&\le 1, \\
    \abs{A_c\cap B_c}&\le 2,
    & \abs{B_a\cap C_a}&\le 2,
    & \abs{A_b\cap C_b}&\le 2.
  \end{align*}
\end{lemma}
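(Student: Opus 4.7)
The plan is to exploit the maximality in the choice of the barrier to convert \rc{3}{3}ness into an explicit equality, and then push each hypothesized inequality through Lemma~\ref{lem:sep2} (together with its $a\leftrightarrow b$ relabeling) and Lemma~\ref{lem:abba}. By Lemma~\ref{lem:sep}, any partition $(X,Y)$ of $V(G\setminus a)$ with $\rho_{G\setminus a}(X)=2$ and $|X|,|Y|\ge 5$ places exactly one of $b,c$ on each side, so after a label swap it yields a new barrier sharing the given $B$- and $C$-parts. Combined with (B2) and Lemma~\ref{lem:primetriplet}, the maximality of $\min(|A_b|,|A_c|)$ then forces $G\setminus a$ to be \rc{3}{3} precisely when $\min(|A_b|,|A_c|)=5$, and similarly for $G\setminus b$ and $G\setminus c$. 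So the task reduces to showing the hypothesis forces at least two of the three barrier minima to equal $5$.

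The barrier admits a natural action of the symmetric group of order six: the cyclic permutation $a\to b\to c$ sends the partitions $A\to B\to C$, and the transposition $a\leftrightarrow b$ swaps the $A$- and $B$-partitions and flips $C_a\leftrightarrow C_b$. Under this action, the twelve inequalities fall into three orbits with representatives $|A_b\cap B_a|\le 1$, $|A_b\cap B_c|\le 1$, and $|A_c\cap B_c|\le 2$, and since the conclusion is symmetric, it suffices to handle these three. A key observation is that Lemma~\ref{lem:sep2}(i) is invariant in form under the $a\leftrightarrow b$ swap, so it yields a companion statement: $|A_c\cap B_a|\ge 2\Rightarrow|A_b\cap B_c|\le 3$.

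For $|A_b\cap B_a|\le 1$, the identities $|A_b|=1+|A_b\cap B_a|+|A_b\cap B_c|$ and $|B_a|=1+|A_b\cap B_a|+|A_c\cap B_a|$ combined with $|A_b|,|B_a|\ge 5$ give $|A_b\cap B_c|,|A_c\cap B_a|\ge 3$; Lemma~\ref{lem:sep2}(i) and its relabeling pin both to $3$, so $|A_b|=|B_a|=5$ and $G\setminus a,G\setminus b$ are \rc{3}{3}. For $|A_c\cap B_c|\le 2$, the analogous identities (without the $+1$ correction, since $b\notin A_c$ and $a\notin B_c$) yield $|A_c\cap B_a|,|A_b\cap B_c|\ge 3$, and the same pair of applications gives $|A_c|=|B_c|=5$. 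For $|A_b\cap B_c|\le 1$, the identity forces $|A_b\cap B_a|\ge 3$; the alternative $\rho(A_b\cap B_a,A_c\cup B_c)\ge 2$ of Lemma~\ref{lem:sep2}(ii) would give $|A_c\cap B_c|\le 3$ and hence $|B_c|\le 4$, contradicting (B2), so Lemma~\ref{lem:sep2}(iii) applies and pins $|A_b\cap B_a|=3$, giving $|A_b|=5$. Then $|A_b\cap B_a|=3>2$ and $|A_c\cap B_c|=|B_c|-1\ge 4>3$, so Lemma~\ref{lem:abba}(i) concludes that $G\setminus c$ is also \rc{3}{3}, and together with $G\setminus a$ this provides the required pair.

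The main obstacle is the symmetry bookkeeping: one must see that the $a\leftrightarrow b$ relabeling of Lemma~\ref{lem:sep2}(i) genuinely yields a distinct conclusion about $|A_b\cap B_c|$, not merely a restatement of the original, since without this second application the first and third cases would each produce only a single \rc{3}{3} graph rather than two.
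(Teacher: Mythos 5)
Your proposal is correct and takes essentially the same route as the paper's proof: you reduce to the same three representative cases via the $S_3$-symmetry on barriers, use the counting identities together with Lemma~\ref{lem:sep2}(i) (and its $a\leftrightarrow b$ relabeling) to pin the relevant parts to size~$5$, and in the mixed case invoke Lemma~\ref{lem:abba} to get the second \rc{3}{3} minor. The explicit reformulation of the maximality condition via Lemma~\ref{lem:sep} (that a part fails to be \rc{3}{3} only when $\min$ exceeds $5$) is a welcome clarification of a step the paper leaves implicit.
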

\begin{proof}
  (1) Suppose that $\abs{A_b\cap B_a}\le 1$.
  If $\abs{B_a\cap A_c}>3$, then
  by Lemma~\ref{lem:sep2}\eqref{enum:sep2-1}, $\abs{A_b\cap B_c}\le 1$.
  Then $\abs{A_b}=1+\abs{A_b\cap B_a}+\abs{A_b\cap B_c}\le 3$, a contradiction.
  So $\abs{B_a\cap A_c}\le 3$. Because $\abs{B_a}\ge 5$,
  we have $\abs{B_a\cap A_c}=3$ and $\abs{B_a}=5$. By Lemma~\ref{lem:sep2}\eqref{enum:sep2-1} and symmetry between $a$ and $b$, 
  $\abs{A_b\cap B_c}\le 3$ and therefore $\abs{A_b}= 5$.
  Therefore, $G\setminus a$ and $G\setminus b$
  are  \rc{3}{3}
  by the assumption that each of $\min(\abs{A_b},\abs{A_c})$ and $\min(B_a,B_c)$ is maximum.

  By symmetry, the conclusion holds if $\abs{B_c\cap C_b}\le 1$ or $\abs{A_c\cap C_a}\le 1$.

  (2) Suppose that $\abs{B_a\cap A_c}\le 1$.
  Since $\abs{A_c}\ge 5$, we have $\abs{A_c\cap B_c}\ge 4$, and therefore
  $\rho(A_b\cap B_a,A_c\cup B_c)\le 1$   by Lemma~\ref{lem:sep2}\eqref{enum:sep2-3}
  and
  $\abs{A_b\cap B_a}\le 3$
  by Lemma~\ref{lem:sep2}\eqref{enum:sep2-4}.
  Then $\abs{B_a}=1+\abs{A_b\cap B_a}+\abs{B_a\cap A_c}\le 5$.
  So $G\setminus b$ is \rc{3}{3}
  by the assumption that $\min(B_a,B_c)$ is maximum.
  Because the equality holds, $\abs{A_b\cap B_a}=3$.
  By Lemma~\ref{lem:abba}, $G\setminus c$ is
  \rc{3}{3}
  by the assumption that $\min(C_a,C_b)$ is maximum.

  By symmetry, the conclusion holds if $\abs{A_b\cap B_c}\le 1$, $\abs{B_c\cap C_a}\le 1$,
  $\abs{C_a\cap A_b}\le 1$, 
  $\abs{C_b\cap B_a}\le 1$,
    or $\abs{A_c\cap C_b}\le 1$.

  (3) Suppose that $\abs{A_c\cap B_c}\le 2$.
  Since $\abs{B_c}\ge 5$, we have $\abs{A_b\cap B_c}\ge 3$.
  By Lemma~\ref{lem:sep2}\eqref{enum:sep2-1}, $\abs{B_a\cap A_c}\le 3$.
  Then $\abs{A_c}=\abs{B_a\cap A_c}+\abs{A_c\cap B_c}\le 5$.
  Since the equality holds, $\abs{B_a\cap A_c}=3$.
  By Lemma~\ref{lem:sep2}\eqref{enum:sep2-1} and the symmetry between $b$ and $c$, we have $\abs{A_b\cap B_c}\le 3$.
  So $\abs{B_c}=\abs{A_b\cap B_c}+\abs{A_c\cap B_c}\le 5$.
  So $G\setminus a$ and $G\setminus b$ are
  \rc{3}{3}
  by the assumption that each of $\min(\abs{A_b},\abs{A_c})$ and $\min(B_a,B_c)$ is maximum.

  By symmetry, the conclusion holds if  $\abs{B_a\cap C_a}\le 2$ or 
  $\abs{A_b\cap C_b}\le 2$.
\end{proof}

\begin{lemma}\label{lem:sum1}
  Let $((A_b,A_c),(B_a,B_c),(C_a,C_b))$ be a barrier of a prime \rc{3}{1} graph~$G$ with a triplet $\{a,b,c\}$
  such that 
  each of $\min(\abs{A_b},\abs{A_c})$, $\min(\abs{B_a},\abs{B_c})$, and $\min(\abs{C_a},\abs{C_b})$ is
  maximum.
  If 
  $\abs{A_b\cap B_c\cap C_a}+\abs{A_c\cap C_b\cap B_a}\le 1$,
  then 
  at least two of 
  $G\setminus a$, $G\setminus b$, and $G\setminus
    c$ are \rc{3}{3}.
\end{lemma}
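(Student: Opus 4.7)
The plan is to reduce to Lemma~\ref{lem:abba} via a case analysis on the sizes of the eight octants of $V(G)\setminus\{a,b,c\}$ cut out by the three barrier bipartitions. Write $p_1,\ldots,p_8$ for these sizes, arranged so that $p_3=|A_b\cap B_c\cap C_a|$ and $p_6=|A_c\cap B_a\cap C_b|$; then every pairwise intersection such as $|A_b\cap B_a|$ or $|A_c\cap B_c|$ becomes a sum $p_i+p_j$, and the hypothesis reads $p_3+p_6\le 1$. Note that the symmetry swapping the roles of $a$ and $c$ (with corresponding relabeling of the barrier) exchanges $p_3$ and $p_6$.

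First, if any of the twelve inequalities in Lemma~\ref{lem:meatiest} is satisfied, the conclusion already follows, so we may assume their negations. These give $p_i+p_j\ge 2$ for nine pairwise sums and $p_i+p_j\ge 3$ for the three \emph{long-diagonal} sums $p_7+p_8=|A_c\cap B_c|$, $p_1+p_5=|B_a\cap C_a|$, $p_2+p_4=|A_b\cap C_b|$. By the symmetry, it suffices to treat the two subcases $p_3=p_6=0$ and $p_3=1$, $p_6=0$.

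When $p_3=p_6=0$, the pairwise bounds propagate to $p_1,p_2,p_4,p_5,p_7,p_8\ge 2$, so both the three long-diagonal sums and the three \emph{short-diagonal} companions $p_1+p_2=|A_b\cap B_a|$, $p_5+p_7=|A_c\cap C_a|$, $p_4+p_8=|B_c\cap C_b|$ are at least $4$. All three hypotheses of Lemma~\ref{lem:abba} fire, and in fact all three of $G\setminus a$, $G\setminus b$, $G\setminus c$ are prime \rc{3}{3}. When $p_3=1$ and $p_6=0$, the propagated bounds weaken to $p_1,p_4,p_7\ge 1$ alongside $p_2,p_5,p_8\ge 2$; the short-diagonal sums are still at least $3$, so the ``$>2$'' part of each Lemma~\ref{lem:abba} hypothesis is satisfied. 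As soon as at least two of the three long-diagonal sums reach $4$, two applications of Lemma~\ref{lem:abba} conclude the proof.

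The main obstacle is to rule out the degenerate sub-configurations in which two or three long-diagonal sums equal exactly $3$; the individual bounds then pin the octants down to a rigid pattern---for instance the fully degenerate profile $p_1=p_4=p_7=1$, $p_2=p_5=p_8=2$ forces $|A_b|=|A_c|=|B_a|=|B_c|=|C_a|=|C_b|=6$, a situation in which none of the three reductions is \rc{3}{3}, so the conclusion can only hold if the configuration itself is impossible. I plan to eliminate such profiles by exploiting the maximality of $\min(|A_b|,|A_c|)$, $\min(|B_a|,|B_c|)$, and $\min(|C_a|,|C_b|)$ in the choice of barrier: a submodular inequality (Proposition~\ref{prop:submodular}) applied to a carefully chosen pair among the three bipartitions, or equivalently the transfer of the unique vertex in the $p_3$-octant from $A_b$ to $A_c$, should yield a new rank-$2$ partition of some $V(G)\setminus\{v\}$, $v\in\{a,b,c\}$, whose smaller side strictly exceeds that of the given barrier, contradicting maximality. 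Identifying the correct combination of partitions to mix is the technical heart of the argument and the step I expect to be most delicate.
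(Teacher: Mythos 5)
Your overall strategy---split $V(G)-\{a,b,c\}$ into the eight octants, use the negations of the inequalities in Lemma~\ref{lem:meatiest} to bound the octants from below, and then invoke Lemma~\ref{lem:abba}---is exactly the paper's. However, there is a bookkeeping error that manufactures a phantom obstacle and leaves your proof incomplete at what you call its ``technical heart.'' The three long-diagonal intersections $A_c\cap B_c$, $B_a\cap C_a$, $A_b\cap C_b$ are \emph{not} sums of two octants: each contains one of $c$, $a$, $b$ respectively (for instance $c\in A_c$ and $c\in B_c$), so $\abs{A_c\cap B_c}=1+p_7+p_8$ and similarly for the other two. Once this $+1$ is restored, the six ``mixed'' bounds from Lemma~\ref{lem:meatiest} alone close the argument. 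In your subcase $p_3=1$, $p_6=0$, the bounds $\abs{A_b\cap B_c},\abs{B_c\cap C_a},\abs{C_a\cap A_b},\abs{C_b\cap B_a},\abs{B_a\cap A_c},\abs{A_c\cap C_b}\ge 2$ force, octant by octant, $\abs{A_b\cap B_c\cap C_b},\abs{A_c\cap B_c\cap C_a},\abs{A_b\cap B_a\cap C_a}\ge 1$ and $\abs{A_b\cap B_a\cap C_b},\abs{A_c\cap B_a\cap C_a},\abs{A_c\cap B_c\cap C_b}\ge 2$. Hence each long diagonal is at least $1+1+2=4$ and each of $\abs{A_b\cap B_a}$, $\abs{A_c\cap C_a}$, $\abs{B_c\cap C_b}$ is at least $3$, so all three instances of Lemma~\ref{lem:abba} fire and all three of $G\setminus a$, $G\setminus b$, $G\setminus c$ are \rc{3}{3}. (The case $p_3=p_6=0$ is even easier, as you note.)

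Consequently the ``degenerate profiles with two or three long-diagonal sums equal to exactly $3$'' do not exist under your standing hypotheses, and the barrier-maximality/vertex-transfer argument you propose for them---which you explicitly leave unexecuted---is unnecessary. As written, though, your proof is genuinely incomplete precisely at that deferred step: you neither carry it out nor observe that it is vacuous. The fix is not to find the ``correct combination of partitions to mix'' but to correct the count of the long-diagonal intersections, after which the proof coincides with the paper's.
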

\begin{proof}
  By the symmetry between $b$ and $c$,
  we may assume that 
  $\abs{A_b\cap B_c\cap C_a}=0$ and $\abs{A_c\cap C_b\cap B_a}\le 1$.
  By Lemma~\ref{lem:meatiest}, we may assume that 
  $\abs{A_b\cap B_c}> 1$,
  $\abs{B_c\cap C_a}> 1$,
  $\abs{C_a\cap A_b}>1$, 
  $\abs{C_b\cap B_a}>1$,
  $\abs{B_a\cap A_c}>1$,
  and 
  $\abs{A_c\cap C_b}>1$.
  
  Observe that
  \begin{align*}
    \abs{A_b\cap B_c\cap C_b}&=\abs{A_b\cap B_c}-\abs{A_b\cap B_c\cap C_a}\ge 2-0=2,\\
    \abs{A_c\cap B_c\cap C_b}&=\abs{A_c\cap C_b}-\abs{A_c\cap C_b\cap B_a}\ge 2-1=1,\\
    \abs{A_b\cap B_a\cap C_a}&=\abs{C_a\cap A_b}-\abs{A_b\cap B_c\cap C_a} \ge 2-0=2,\\
    \abs{A_c\cap B_a\cap C_a}&=\abs{B_a\cap A_c}-\abs{A_c\cap C_b\cap B_a} \ge 2-1=1. 
  \end{align*}
  Therefore, $\abs{B_c\cap C_b}=\abs{A_b\cap B_c\cap C_b}+\abs{A_c\cap B_c\cap C_b}\ge 3$ and 
  $\abs{B_c\cap C_a}=\abs{A_b\cap B_a\cap C_a}+\abs{A_c\cap B_a\cap C_a}+1\ge 4$. By Lemma~\ref{lem:abba}, $G\setminus a$ is \rc{3}{3}.

  By symmetry, $\abs{A_b\cap B_a}\ge 3$, $\abs{A_c\cap C_a}\ge 3$, $\abs{A_c\cap B_c}\ge 4$, and $\abs{A_b\cap C_b}\ge 4$. From Lemma~\ref{lem:abba}, we deduce that $G\setminus b$ and $G\setminus c$ are \rc{3}{3}.
\end{proof}

\begin{lemma}\label{lem:11}
  Let $((A_b,A_c),(B_a,B_c),(C_a,C_b))$ be a barrier of a prime \rc{3}{1} graph~$G$ with a triplet $\{a,b,c\}$
  such that 
  each of $\min(\abs{A_b},\abs{A_c})$, $\min(\abs{B_a},\abs{B_c})$, and $\min(\abs{C_a},\abs{C_b})$ is
  maximum.
  If $\abs{A_b\cap B_c\cap C_a}=\abs{A_c\cap C_b\cap B_a}=1$, then
  at least two of   $G\setminus a$, $G\setminus b$, and $G\setminus
    c$ are \rc{3}{3}.
\end{lemma}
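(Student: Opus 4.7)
The plan is to extend the case analysis of Lemma~\ref{lem:sum1} to the symmetric setting $\abs{X_3}=\abs{X_6}=1$, where $X_1,\ldots,X_8$ denote the eight octants obtained by intersecting the three barrier partitions (so $X_3=A_b\cap B_c\cap C_a$, $X_6=A_c\cap B_a\cap C_b$, and the remaining $X_i$ are the obvious triple intersections). By Lemma~\ref{lem:meatiest}, I may assume all twelve of its hypothesis-inequalities fail; together with $\abs{X_3}=\abs{X_6}=1$ this forces $\abs{X_i}\ge 1$ for every $i\in\{1,2,4,5,7,8\}$. Applying Lemma~\ref{lem:sep2}\eqref{enum:sep2-1} in its six symmetric variants then yields $\abs{X_i}\le 2$ for the same indices, so every such octant has size exactly $1$ or $2$.

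Since the barrier is chosen so that each of $\min(\abs{A_b},\abs{A_c})$, $\min(\abs{B_a},\abs{B_c})$, $\min(\abs{C_a},\abs{C_b})$ is maximum, and since $\abs{A_b}=2+\abs{X_1}+\abs{X_2}+\abs{X_4}$ (with five symmetric analogues), the graph $G\setminus a$ is \rc{3}{3}{} precisely when at least one of the triples $\{X_1,X_2,X_4\}$ or $\{X_5,X_7,X_8\}$ consists entirely of singletons, and analogously for $G\setminus b$ and $G\setminus c$. I would then run a case analysis on $T=\{i:\abs{X_i}=2\}\subseteq\{1,2,4,5,7,8\}$, reduced by the $S_3$-symmetry permuting $\{a,b,c\}$. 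In most cases, two of the three \rc{3}{3}-claims follow either from this singleton criterion or from Lemma~\ref{lem:abba}; moreover, whenever a hypothesis of Lemma~\ref{lem:abba} is satisfied but its conclusion contradicts the explicitly computed barrier sizes, the configuration is ruled out. This eliminates, for example, the diametrically opposite pairs $T\in\{\{1,8\},\{2,7\},\{4,5\}\}$ and the ``same-half'' triples $T\in\{\{1,4,7\},\{2,5,8\}\}$.

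The main obstacle is a small residue of configurations, exemplified by $T=\{1,4\}$ and its $S_3$-images, where Lemma~\ref{lem:abba} fires only once and supplies only one of the three required \rc{3}{3}-claims. To dispatch these, I would exploit the remaining freedom in barrier selection --- choosing, among the barriers that realize the max-min conditions and satisfy $\abs{X_3}=\abs{X_6}=1$, one that additionally extremizes an auxiliary quantity such as $\sum_{i\in\{1,2,4,5,7,8\}}\abs{X_i}$ --- and then apply further submodularity in the spirit of Lemmas~\ref{lem:sepequal} and~\ref{lem:sep2} to sets involving the two distinguished vertices lying in $X_3$ and $X_6$. The expectation is that these refined cut-rank inequalities either produce the missing second \rc{3}{3}-claim or contradict the extremality of the chosen barrier, completing the proof.
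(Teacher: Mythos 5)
Your framing via the eight octant sets $X_1,\ldots,X_8$ is a valid and arguably cleaner way to organize the case analysis, and the first two paragraphs are essentially correct: once Lemma~\ref{lem:meatiest} and Lemma~\ref{lem:sep2}\eqref{enum:sep2-1} are applied, each of $X_1,X_2,X_4,X_5,X_7,X_8$ indeed has size $1$ or $2$, and your characterization of when each $G\setminus a$, $G\setminus b$, $G\setminus c$ is \rc{3}{3}{} (each index triple being all singletons) is right because the barrier is chosen to maximize the min-sides.

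However, the third paragraph contains a genuine gap, and you correctly sense it yourself: cases like $T=\{1,4\}$ (and its $S_3$-orbit) are not dispatched by the singleton criterion plus Lemma~\ref{lem:abba} alone, since only one of the three deletions is certified. Your proposed repair — choosing a barrier that further extremizes $\sum_{i\in\{1,2,4,5,7,8\}}\abs{X_i}$ — is vacuous: that sum equals $\abs{V}-5$ once $\abs{X_3}=\abs{X_6}=1$ is imposed, so it provides no tie-breaking freedom. More importantly, what actually resolves these cases is not additional barrier freedom but a concrete chain of submodularity inequalities. After normalizing by the $b\leftrightarrow c$ swap and cyclic shifts so that (in your notation) $\abs{X_5}=2$, the paper's proof of Lemma~\ref{lem:11} shows $\rho(A_c\cap C_a)>2$, forces $\abs{A_b\cap C_b}=3$, deduces $\abs{A_b\cap B_c}=\abs{C_b\cap B_a}=2$, then uses sep2 to bound $\rho(B_c\cap C_b)\le 2$ and a further submodular inequality on $(A_c\cap C_b)\cup(B_c\cap C_b)$ to force $\abs{A_c\cap C_b}=2$; this yields $\abs{C_b}=5$, and one more submodular argument kills the remaining double-$3$ subcase. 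That chain is precisely what your residue cases (e.g.\ $T=\{1,4\}$, which becomes $\{5,8\}$ after the $b\leftrightarrow c$ swap) require, and it produces a contradiction rather than a second \rc{3}{3}{} conclusion. Without working out these inequalities, the proposal does not close.
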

\begin{proof}
  Since all graphs up to $11$ vertices are \rc{3}{3}, we may assume that $\abs{V}\ge 13$.
  By Lemma~\ref{lem:meatiest}, we may assume that 
  $\abs{A_b\cap B_a}>1$, 
  $\abs{A_c\cap C_a}>1$, 
  $\abs{B_c\cap C_b}>1$, 
  $\abs{A_b\cap B_c}> 1$,
  $\abs{B_c\cap C_a}> 1$,
  $\abs{C_a\cap A_b}>1$, 
  $\abs{C_b\cap B_a}>1$,
  $\abs{B_a\cap A_c}>1$,
  $\abs{A_c\cap C_b}>1$, 
  $\abs{A_c\cap B_c}>2$, 
  $\abs{A_b\cap C_b}>2$, 
  and 
  $\abs{B_a\cap C_a}>2$.
  By Lemma~\ref{lem:sep2}\eqref{enum:sep2-1} and symmetry in $\{a,b,c\}$, we have 
  $\abs{A_b\cap B_c}, 
  \abs{B_c\cap C_a},
  \abs{C_a\cap A_b},
  \abs{C_b\cap B_a},
  \abs{B_a\cap A_c},
  \abs{A_c\cap C_b}\in\{2,3\}$
  and 
  $\rho(A_b\cap B_c), 
  \rho(B_c\cap C_a),
  \rho(C_a\cap A_b),
  \rho(C_b\cap B_a),
  \rho(B_a\cap A_c),
  \rho(A_c\cap C_b)\le 2$.
  
  First, observe that 
  \begin{align*}
    \lefteqn{\abs{B_a\cap A_c}+\abs{C_b\cap B_a}+\abs{A_c\cap C_b}
    +\abs{A_b\cap B_c}+\abs{B_c\cap C_a}+\abs{C_a\cap A_b}}\\
    &= \abs{ (B_a\cap A_c)\cup (C_b\cap B_a)\cup (A_c\cap C_b)} + 2\abs{A_c\cap B_a\cap C_b}\\
    &\quad + \abs{(A_b\cap B_c)\cup (B_c\cap C_a)\cup (C_a\cap A_b)}
    +2\abs{A_b\cap B_c\cap C_a}\\
    &= \abs{V}-\abs{\{a,b,c\}}+2+2\ge 14.
  \end{align*}
  We may assume that 
  \(\abs{B_a\cap A_c}+\abs{C_b\cap B_a}+\abs{A_c\cap C_b}\ge 7\)
  by swapping $b$ and $c$ if necessary.
  By exchanging $(a,b,c)$ with $(b,c,a)$ or $(c,a,b)$ if necessary, we may assume that 
  $\abs{B_a\cap A_c}= \max(\abs{B_a\cap A_c}, \abs{C_b\cap B_a}, \abs{A_c\cap C_b})$
  and therefore \[ \abs{B_a\cap A_c}=3\] 
  and $\abs{B_a\cap A_c\cap C_a}= \abs{B_a\cap A_c}-\abs{B_a\cap A_c\cap C_b} = 2$.
  Since $G$ is prime, $\rho(A_c\cap B_a\cap C_b)=2$.

  Since 
  \begin{align*}
    \abs{(B_a\cap A_c)\cup (A_c\cap C_a)}%
    &=\abs{A_c\cap C_a}+ \abs{B_a\cap A_c\cap C_b}=4, \\
    \abs{B_a\cap A_c\cap C_a}&=\abs{B_a\cap A_c}-\abs{B_a\cap A_c\cap C_b}\ge 2,
  \end{align*}
  we have 
  $\rho((B_a\cap A_c)\cup (A_c\cap C_a))\ge 3$ 
  and $\rho(B_a\cap A_c\cap C_a)\ge 2$.
  Previously we assumed that $\rho(B_a\cap A_c)\le 2$.
  By the submodular inequality,
  \[\rho(B_a\cap A_c)+\rho(A_c\cap C_a)\ge \rho(B_a\cap A_c\cap C_a)+
    \rho((B_a\cap A_c)\cup (A_c\cap C_a))\ge 5\]
  and therefore $\rho(A_c\cap C_a)>2$. By \eqref{enum:sep2-3} and \eqref{enum:sep2-4} of Lemma~\ref{lem:sep2} with the symmetry between $b$ and $c$, 
  we deduce that $\abs{A_b\cap C_b}\le 3$.
  By Lemma~\ref{lem:meatiest}, we may assume that \[ \abs{A_b\cap C_b}=3.\] 
  Observe that 
  $\abs{A_b\cap C_b}=1+\abs{A_b\cap C_b\cap B_a}+\abs{A_b\cap B_c\cap C_b}
  =1+\abs{C_b\cap B_a}-\abs{A_c\cap C_b\cap B_a}
  +\abs{A_b\cap B_c}-\abs{A_b\cap B_c\cap C_a}
  =\abs{C_b\cap B_a}+\abs{A_b\cap B_c}-1\ge 2+2-1$
  and therefore 
  \[ 
  \abs{A_b\cap B_c}=\abs{C_b\cap B_a}=2.
  \] 

  Since $\abs{B_a\cap C_a}= 1+\abs{B_a\cap A_c\cap C_a}+\abs{B_a\cap A_c\cap C_b}\ge 4$, 
  from \eqref{enum:sep2-3} and \eqref{enum:sep2-4} of Lemma~\ref{lem:sep2} and the symmetry between $a$ and $c$,
  we deduce that 
  \(\rho(B_c\cap C_b)\le 2\).
  Previously we assumed that $\rho(A_c\cap C_b)\le 2$.
  If $\abs{A_c\cap C_b}=3$, then $\abs{A_c\cap B_c\cap C_b}=\abs{A_c\cap C_b}-\abs{A_c\cap C_b\cap B_a}=2$, 
  $\rho(A_c\cap B_c\cap C_b)\ge 2$, and 
  \begin{align*}
    \abs{(A_c\cap C_b)\cup (B_c\cap C_b)}
    &=3+\abs{A_b\cap B_c\cap C_b}\\
    &\ge 3+\abs{A_b\cap B_c}-\abs{A_b\cap B_c\cap C_a}\\
    &\ge 3+2 -1 = 4.
  \end{align*}
  By the submodular inequality,
  \[
    2+2\ge \rho(A_c\cap C_b)+\rho(B_c\cap C_b)\ge
    \rho(A_c\cap B_c\cap C_b)+\rho((A_c\cap C_b)\cup (B_c\cap C_b))\]
  and therefore $\rho((A_c\cap C_b)\cup (B_c\cap C_b))\le 2$, 
  contradicting to the assumption that $G$ is \rc{3}{1}.
  Therefore, we conclude that \[ \abs{A_c\cap C_b}=2.\]

  Then $\abs{C_b}=\abs{A_b\cap C_b}+\abs{A_c\cap C_b}=3+2$ and therefore $G\setminus
    c$ is \rc{3}{3}, because $(C_a,C_b)$ was
  chosen to maximize $\min(\abs{C_a},\abs{C_b})$.

  Suppose that $\abs{B_c\cap C_a}=\abs{C_a\cap A_b}=3$.
  Since $\abs{A_c\cap B_c}=1+\abs{A_c\cap B_c\cap C_a}+\abs{A_c\cap B_c\cap C_b}
  =1+\abs{B_c\cap C_a}-\abs{A_b\cap B_c\cap C_a}+\abs{A_c\cap C_b}-\abs{A_c\cap C_b\cap B_a}\ge 1+3-1+2-1=4$, we have
  $\rho(A_b\cap B_a)\le 2$ by
  \eqref{enum:sep2-3} and \eqref{enum:sep2-4} of Lemma~\ref{lem:sep2}.
  By the submodular
  inequality,
  \[
    2+2\ge \rho(A_b\cap B_a)+\rho(C_a\cap A_b)\ge
    \rho(C_a\cap A_b\cap B_a)+
    \rho((A_b\cap B_a)\cup (C_a\cap A_b)).
  \]
  Since $\abs{C_a\cap A_b\cap B_a}=\abs{C_a\cap A_b}-\abs{C_a\cap A_b\cap B_c}=2$, we have $  \rho(C_a\cap A_b\cap B_a)\ge 2$
  and therefore $\rho((A_b\cap B_a)\cup (C_a\cap A_b))\le 2$.
  This contradicts to the assumption that 
  $G$ is \rc{3}{1} 
  because 
  $\abs{(A_b\cap B_a)\cup (C_a\cap A_b)}=\abs{C_a\cap A_b}+\abs{A_b\cap B_a\cap C_b}=3+\abs{C_b\cap B_a}-\abs{C_b\cap B_a\cap A_c}\ge 3+2-1$.
  Therefore  $\abs{B_c\cap C_a}=2$ or $\abs{C_a\cap A_b}=2$.

  If $\abs{C_a\cap A_b}=2$, then
  $\abs{A_b}=\abs{A_b\cap C_b}+\abs{C_a\cap A_b}=3+2$ and so $G\setminus a$
  is \rc{3}{3}.
  If $\abs{B_c\cap C_a}=2$, then
  \begin{align*}
    \abs{B_c}&=1+\abs{B_c\cap C_a}+\abs{B_c\cap C_b}\\
    &= 3+ \abs{A_b\cap B_c\cap C_b}+\abs{A_c\cap B_c\cap C_b}\\
    &= 3+(\abs{A_b\cap B_c}-\abs{A_b\cap B_c\cap C_a})
    +(\abs{A_c\cap C_b}-\abs{A_c\cap C_b\cap B_a}) \\
    &= 3+2-1+2-1=5
  \end{align*}
 and so $G\setminus b$
  is \rc{3}{3}.
\end{proof}
\begin{lemma}\label{lem:lambda2}
  Let $((A_b,A_c),(B_a,B_c),(C_a,C_b))$ be a barrier of a prime \rc{3}{1} graph~$G$ with a triplet $\{a,b,c\}$
  such that 
  each of $\min(\abs{A_b},\abs{A_c})$, $\min(\abs{B_a},\abs{B_c})$, and $\min(\abs{C_a},\abs{C_b})$ is
  maximum.
    If $\abs{A_b\cap B_c\cap C_a}\ge 2$ and 
    $\abs{(A_b\cap B_c)\cup (B_c\cap C_a)\cup (C_a\cap A_b)}>3$, 
    then  at least two of   $G\setminus a$, $G\setminus b$, and $G\setminus
    c$ are \rc{3}{3}.

\end{lemma}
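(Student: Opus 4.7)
The plan is to argue by contradiction: suppose that fewer than two of $G\setminus a$, $G\setminus b$, and $G\setminus c$ are \rc{3}{3}. Then by the contrapositive of Lemma~\ref{lem:meatiest}, none of the twelve listed inequalities holds, so in particular $\abs{A_c\cap B_a}\ge 2$, $\abs{B_a\cap C_b}\ge 2$, and $\abs{A_c\cap C_b}\ge 2$.

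The first step is to convert these size bounds into rank bounds by applying Lemma~\ref{lem:sep2}(i), together with the involution $a\leftrightarrow b$ and its cyclic images under $(a,b,c)\mapsto(b,c,a)$, to obtain
\[\rho(A_b\cap B_c)\le 2,\quad \rho(B_c\cap C_a)\le 2,\quad \rho(C_a\cap A_b)\le 2.\]
Set $T=A_b\cap B_c\cap C_a$. Any two of the three sets above meet precisely in $T$, and since $G$ is prime with $\abs{T}\ge 2$ and $\abs{V(G)\setminus T}\ge 5$, we have $\rho(T)\ge 2$.

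The heart of the argument is a two-step submodular reduction. Applying Proposition~\ref{prop:submodular} first to $B_c\cap C_a$ and $C_a\cap A_b$ (which meet in $T$), and then joining the result with $A_b\cap B_c$ (which also meets the union in exactly $T$), gives
\[\rho(U)\le \rho(A_b\cap B_c)+\rho(B_c\cap C_a)+\rho(C_a\cap A_b)-2\rho(T)\le 2,\]
where $U=(A_b\cap B_c)\cup(B_c\cap C_a)\cup(C_a\cap A_b)$. This works precisely because the primeness-forced bound $\rho(T)\ge 2$ exactly cancels both $-\rho(T)$ contributions.

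To finish, I check that $U$ witnesses a violation of \rc{3}{1}. The second hypothesis is exactly $\abs{U}>3$. A brief verification shows that $U$ is disjoint from both $\{a,b,c\}$ and $A_c\cap B_a$: each of the three summands of $U$ avoids $a,b,c$, and any vertex of $A_c\cap B_a$ lies in neither $A_b$ nor $B_c$ and hence in none of the three summands. Therefore $\abs{V(G)\setminus U}\ge 3+\abs{A_c\cap B_a}\ge 5$. Since $\rho(U)\le 2<3$ yet both $\abs{U}$ and $\abs{V(G)\setminus U}$ exceed~$3$, this contradicts that $G$ is \rc{3}{1}. The step I expect to require the most care is the submodular reduction to $\rho(U)\le 2$; once that is in hand, the disjointness check and the final \rc{3}{1}-contradiction are routine.
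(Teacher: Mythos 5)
Your proof is correct, and the conclusion step differs somewhat from the paper's. Both arguments begin the same way: assume for contradiction that fewer than two of $G\setminus a$, $G\setminus b$, $G\setminus c$ are \rc{3}{3}, invoke Lemma~\ref{lem:meatiest} to get the size lower bounds, and apply Lemma~\ref{lem:sep2}(i) (with the symmetry of the barrier) to obtain $\rho(A_b\cap B_c),\rho(B_c\cap C_a),\rho(C_a\cap A_b)\le 2$. From there the paper first reduces to the case $\abs{A_b\cap B_c\cap C_a}=2$ (the case $=3$ forces all three sets to equal the triple intersection, which already contradicts $\abs{U}>3$), applies a \emph{single} submodular inequality to each pair to conclude $\abs{(A_b\cap B_c)\cup(B_c\cap C_a)}\le 3$, and then argues by inclusion--exclusion that at least two of the three pairwise intersections must coincide with $A_b\cap B_c\cap C_a$, forcing $\abs{U}\le 3$. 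You instead chain two submodular inequalities to bound $\rho(U)\le \rho(P)+\rho(Q)+\rho(R)-2\rho(T)\le 2$ directly (using that the three sets pairwise meet exactly in $T$ and that primeness gives $\rho(T)\ge 2$), and then contradict \rc{3}{1}{} using $\abs{U}>3$ and $\abs{V\setminus U}\ge 3+\abs{A_c\cap B_a}\ge 5$. Your route avoids the case split on $\abs{T}$ and the pairwise-coincidence counting, at the cost of needing the disjointness check for the complement; both are sound, and your reduction is arguably a bit cleaner.
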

\begin{proof}
  Since all graphs up to $11$ vertices are \rc{3}{3}, we may assume that $\abs{V}\ge 13$.
  By Lemma~\ref{lem:meatiest}, we may assume that 
  $\abs{A_b\cap B_c}> 1$,
  $\abs{B_c\cap C_a}> 1$,
  $\abs{C_a\cap A_b}>1$, 
  $\abs{C_b\cap B_a}>1$,
  $\abs{B_a\cap A_c}>1$, and 
  $\abs{A_c\cap C_b}>1$.
  By Lemma~\ref{lem:sep2}\eqref{enum:sep2-1} and symmetry in $\{a,b,c\}$, we have 
  $\abs{A_b\cap B_c}, 
  \abs{B_c\cap C_a},
  \abs{C_a\cap A_b},
  \abs{C_b\cap B_a},
  \abs{B_a\cap A_c},
  \abs{A_c\cap C_b}\in\{2,3\}$
  and 
  $\rho(A_b\cap B_c), 
  \rho(B_c\cap C_a),
  \rho(C_a\cap A_b),
  \rho(C_b\cap B_a),
  \rho(B_a\cap A_c),
  \rho(A_c\cap C_b)\le 2$.

  We may assume that $\abs{A_b\cap B_c\cap C_a}=2$, because otherwise $\abs{A_b\cap B_c}=\abs{B_c\cap C_a}=\abs{C_a\cap A_b}=\abs{A_b\cap B_c\cap C_a}=3$.
  By the submodular inequality,
  \[
    2+2\ge \rho(A_b\cap B_c)+\rho(B_c\cap C_a)
    \ge \rho(A_b\cap B_c\cap C_a)+\rho((A_b\cap B_c)\cup (B_c\cap
    C_a)).
  \]
  Since $G$ is prime, we have 
  $\rho(A_b\cap B_c\cap C_a)\ge 2$
  and therefore, $\rho((A_b\cap B_c)\cup (B_c\cap C_a))\le 2$.
  Since $G$ is \rc{3}{1},
  we have $\abs{(A_b\cap B_c)\cup (B_c\cap C_a)}\le 3$.
  Thus,
  \[ \abs{A_b\cap B_c}+\abs{B_c\cap C_a}\le 3+\abs{A_b\cap B_c\cap C_a}=5.\]
  Thus at least one of $A_b\cap B_c$ and $B_c\cap C_a$ is equal to $A_b\cap B_c\cap C_a$. 
  Similarly at least one of $A_b\cap B_c$ and $C_a\cap A_b$ is equal to $A_b\cap B_c\cap C_a$
  and 
  at least one of $B_c\cap C_a$ and $C_a\cap A_b$ is equal to $A_b\cap B_c\cap C_a$.
  Thus, at least two of $A_b\cap B_c$, $B_c\cap C_a$, and $C_a\cap A_b$ are equal to $A_b\cap B_c\cap C_a$ and so 
  $\abs{(A_b\cap B_c)\cup (B_c\cap C_a)\cup (C_a\cap A_b)}\le 3$, contradicting the assumption. 
\end{proof}
\begin{lemma}\label{lem:lambda2'}
  Let $((A_b,A_c),(B_a,B_c),(C_a,C_b))$ be a barrier of a prime \rc{3}{1} graph~$G$ with a triplet $\{a,b,c\}$
  such that 
  each of $\min(\abs{A_b},\abs{A_c})$, $\min(\abs{B_a},\abs{B_c})$, and $\min(\abs{C_a},\abs{C_b})$ is
  maximum.
  If $\abs{A_b\cap B_c\cap C_a}\ge 2$ and $\abs{A_c\cap C_b\cap B_a}\ge 2$, then 
  at least two of   $G\setminus a$, $G\setminus b$, and $G\setminus c$ are \rc{3}{3}.
\end{lemma}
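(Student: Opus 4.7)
The plan is to derive a contradiction with the size bound $\abs{V}\ge 11$ coming from condition (B2), by combining Lemma~\ref{lem:lambda2} with its $b\leftrightarrow c$ symmetric counterpart and counting vertices over the eight-region decomposition of $V\setminus\{a,b,c\}$ induced by the barrier.

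I will first exploit the symmetry of the barrier setup under the transposition $b\leftrightarrow c$: this swap transforms the barrier $((A_b,A_c),(B_a,B_c),(C_a,C_b))$ into $((A_c,A_b),(C_a,C_b),(B_a,B_c))$ and sends the set $A_b\cap B_c\cap C_a$ to $A_c\cap C_b\cap B_a$. Applying Lemma~\ref{lem:lambda2} to this swapped barrier yields: if $\abs{A_c\cap C_b\cap B_a}\ge 2$ and $\abs{(A_c\cap C_b)\cup(C_b\cap B_a)\cup(B_a\cap A_c)}>3$, then at least two of $G\setminus a$, $G\setminus b$, $G\setminus c$ are \rc{3}{3}, which is the desired conclusion. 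Assume now that the conclusion fails. Then both the original and swapped forms of Lemma~\ref{lem:lambda2} force $\abs{(A_b\cap B_c)\cup(B_c\cap C_a)\cup(C_a\cap A_b)}\le 3$ and $\abs{(A_c\cap C_b)\cup(C_b\cap B_a)\cup(B_a\cap A_c)}\le 3$.

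Next, I will partition $V\setminus\{a,b,c\}$ into eight octants indexed by $(\alpha,\beta,\gamma)\in\{b,c\}\times\{a,c\}\times\{a,b\}$, where $(\alpha(v),\beta(v),\gamma(v))$ records which side of $(A_b,A_c)$, $(B_a,B_c)$, $(C_a,C_b)$ contains~$v$; let $n_{\alpha\beta\gamma}$ denote the octant size. Every pairwise intersection splits as a union of two octants, so the first bound reads $n_{bca}+n_{bcb}+n_{baa}+n_{cca}\le 3$, namely the four octants agreeing with $(b,c,a)$ in at least two coordinates, and the second reads $n_{cab}+n_{ccb}+n_{bab}+n_{caa}\le 3$, the four octants agreeing with the opposite triple $(c,a,b)$ in at least two coordinates. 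Since the two collections partition the cube (any octant agrees with exactly one of the two opposite triples in at least two coordinates), adding the bounds yields $\abs{V}-3\le 6$, i.e., $\abs{V}\le 9$. This contradicts $\abs{V}\ge \abs{A_b}+\abs{A_c}+1\ge 11$ from (B2), completing the proof.

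The step I expect to be most delicate is the bookkeeping in the second paragraph: verifying that the two four-element collections of octants precisely partition $\{b,c\}\times\{a,c\}\times\{a,b\}$, so that adding the two bounds yields a clean bound on $\abs{V}-3$ with no overlap or omission. This is a direct finite check on three binary coordinates, but it is the crux of the numerical contradiction; once it is settled, no further case analysis is needed.
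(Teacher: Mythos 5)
Your proof is correct and follows essentially the same route as the paper: both arguments reduce to Lemma~\ref{lem:lambda2} (applied once as stated and once under the $b\leftrightarrow c$ relabelling of the barrier) via the observation that the six pairwise intersections $A_b\cap B_c,\dots,B_a\cap A_c$ cover $V-\{a,b,c\}$, forcing one of the two triple unions to exceed $3$. The only difference is cosmetic: the paper first assumes $\abs{V}\ge 13$ (since otherwise all three deletions are trivially \rc{3}{3}) to get a union of size at least $5$, whereas you use $\abs{V}\ge 11$ from (B2) and derive the contradiction $\abs{V}\le 9$; both counts are valid.
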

\begin{proof}
  Since all graphs up to $11$ vertices are \rc{3}{3}, we may assume that $\abs{V}\ge 13$.
  Since 
  \[ 
  (A_b\cap B_c)\cup (B_c\cap C_a)\cup (C_a\cap A_b)\cup 
  (A_c\cap C_b)\cup (C_b\cap B_a)\cup (B_a\cap A_c)=V-\{a,b,c\},\] 
  we deduce that 
  $\abs{(A_b\cap B_c)\cup (B_c\cap C_a)\cup (C_a\cap A_b)}\ge 5$
  or 
  $\abs{(A_c\cap C_b)\cup (C_b\cap B_a)\cup (B_a\cap A_c)}\ge 5$.
  The conclusion follows by Lemma~\ref{lem:lambda2}.
\end{proof}

\begin{lemma}\label{lem:commonextreme1}
  Let $((A_b,A_c),(B_a,B_c),(C_a,C_b))$ be a barrier of a prime \rc{3}{1} graph~$G$ with a triplet $\{a,b,c\}$
  such that 
  each of $\min(\abs{A_b},\abs{A_c})$, $\min(\abs{B_a},\abs{B_c})$, and $\min(\abs{C_a},\abs{C_b})$ is
  maximum.
    If $A_b\cap B_a\cap C_a=A_b\cap B_c\cap C_b=\emptyset$,
    $\rho(A_b\cap B_a,A_c\cup B_c)\le 1$, 
    and $\abs{C_b\cap B_a}=3$, then 
    at least two of   $G\setminus a$, $G\setminus b$, and $G\setminus c$ are \rc{3}{3}.

\end{lemma}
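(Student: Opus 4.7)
My plan is to exploit the two vanishing octants together with the low-rank hypothesis in order to reduce to a structural situation where the already-established Lemmas~\ref{lem:abba} and~\ref{lem:meatiest} can finish the job. First I would observe that $A_b\cap B_a\cap C_a=\emptyset$ forces $A_b\cap B_a\subseteq C_b$, and dually $A_b\cap B_c\cap C_b=\emptyset$ forces $A_b\cap B_c\subseteq C_a$. Combined with the disjoint decomposition
\[
C_b\cap B_a\;=\;(A_b\cap B_a)\,\sqcup\,(A_c\cap C_b\cap B_a)
\]
and the hypothesis $\abs{C_b\cap B_a}=3$, we obtain $\abs{A_b\cap B_a}+\abs{A_c\cap C_b\cap B_a}=3$. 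On the other hand, the hypothesis $\rho(A_b\cap B_a,A_c\cup B_c)\le 1$ together with Lemma~\ref{lem:sep2}\eqref{enum:sep2-4} gives $\rho(A_b\cap B_a)\le 2$ and $\abs{A_b\cap B_a}\le 3$, while Lemma~\ref{lem:meatiest} lets us assume $\abs{A_b\cap B_a}\ge 2$. This splits the proof into two cases: Case~1 with $\abs{A_b\cap B_a}=3$ and $A_c\cap C_b\cap B_a=\emptyset$, and Case~2 with $\abs{A_b\cap B_a}=2$ and $\abs{A_c\cap C_b\cap B_a}=1$.

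In each case I would expand the six quantities $\abs{A_b}, \abs{A_c}, \abs{B_a}, \abs{B_c}, \abs{C_a}, \abs{C_b}$ as sums over the eight octants, then show two of these six sizes are at most $5$; by the maximality hypothesis on $\min(\abs{A_b},\abs{A_c})$, $\min(\abs{B_a},\abs{B_c})$, $\min(\abs{C_a},\abs{C_b})$, this forces two of $G\setminus a$, $G\setminus b$, $G\setminus c$ to be \rc{3}{3}. In Case~1, the equalities $A_b\cap B_a=C_b\cap B_a$ and $B_c\cap C_b=A_c\cap C_b$ (the latter from $A_b\cap B_c\cap C_b=\emptyset$) give us direct control over $\abs{A_b\cap B_a}$ and $\abs{B_c\cap C_b}$, so Lemma~\ref{lem:abba}(i) applies as soon as $\abs{A_c\cap B_c}>3$, and Lemma~\ref{lem:abba}(iii) applies as soon as $\abs{B_a\cap C_a}>3$. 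Whenever the converse inequalities hold, Lemma~\ref{lem:meatiest} already supplies the conclusion. Assuming $\abs{V}\ge 13$ (smaller graphs are automatically \rc{3}{3}) gives the lower bound on the sum of the remaining octant sizes, which is what makes the counting go through.

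The main obstacle I anticipate is Case~2, where the stray singleton in $A_c\cap C_b\cap B_a$ prevents the clean identifications used in Case~1. Here I would apply submodularity once more, using the cross-cut of $C_b\cap B_a$ against another triple intersection (for instance using $\rho(C_b\cap B_a)\le 2$ with $\rho(A_b\cap B_a,A_c\cup B_c)\le 1$) to bound $\rho(A_b\cap C_b)$ or $\rho(A_c\cap B_a)$, and then invoke Lemmas~\ref{lem:sep2} and~\ref{lem:abba} again. The bookkeeping of the octant sizes is the fiddly part; I expect a reduction to two sub-cases according to which of $\abs{A_b\cap B_c}$ or $\abs{A_c\cap C_b}$ equals~$2$ versus~$3$, each dispatched by a direct size count plus one application of Lemma~\ref{lem:abba}.
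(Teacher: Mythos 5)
Your setup is sound: the identities $A_b\cap B_a\subseteq C_b$, $A_b\cap B_c\subseteq C_a$, the decomposition $C_b\cap B_a=(A_b\cap B_a)\sqcup(A_c\cap C_b\cap B_a)$, and the reduction to $\abs{A_b\cap B_a}\in\{2,3\}$ are all correct. The gap is in the claimed dichotomy ``either Lemma~\ref{lem:abba} applies or Lemma~\ref{lem:meatiest} supplies the conclusion.'' The thresholds do not meet: Lemma~\ref{lem:abba} needs quadrant sizes strictly greater than $3$ (resp.\ $2$), while Lemma~\ref{lem:meatiest} needs them at most $2$ (resp.\ $1$), so the boundary values are covered by neither. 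Concretely, in your Case~1 take the free octants to have sizes $\abs{A_b\cap B_c\cap C_a}=\abs{A_c\cap B_a\cap C_a}=3$, $\abs{A_c\cap B_c\cap C_a}=0$, $\abs{A_c\cap B_c\cap C_b}=2$, so $\abs{V}=14$ and all six sides of the barrier have size at least $5$. Then $\abs{A_c\cap B_c}=3$ and $\abs{B_c\cap C_b}=2$, so neither part of Lemma~\ref{lem:abba} that you invoke applies, no hypothesis of Lemma~\ref{lem:meatiest} holds, and only Lemma~\ref{lem:abba}(ii) fires, yielding one deletion rather than two. All bounds from Lemma~\ref{lem:sep2}(i) are satisfied by this size profile, so pure octant bookkeeping cannot exclude it.

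What excludes such configurations --- and what your sketch is missing --- is a further use of the hypothesis $\rho(A_b\cap B_a,A_c\cup B_c)\le 1$ beyond feeding it into Lemma~\ref{lem:sep2}\eqref{enum:sep2-4}. The paper's proof has no case split at all: it supposes $\abs{B_c\cap C_a}\ge 2$, which gives $\rho(C_b\cap B_a)\le 2$ by Lemma~\ref{lem:sep2}\eqref{enum:sep2-1}; the two empty octants give $A_b\cap C_b=(A_b\cap B_a)\cup\{b\}$, so the rank hypothesis together with Lemma~\ref{lem:sepequal} yields $\rho(A_b\cap C_b)\le 2$; submodularity applied to $A_b\cap C_b$ and $C_b\cap B_a$, with $\rho(A_b\cap B_a\cap C_b)\ge 2$ from primeness, then forces $\rho(\{b\}\cup(C_b\cap B_a))\le 2$ on a set of size $1+\abs{C_b\cap B_a}=4$, contradicting \rc{3}{1}ness. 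Hence $\abs{B_c\cap C_a}\le 1$ and Lemma~\ref{lem:meatiest} finishes. Without an argument of this kind your plan does not close, so as written the proposal has a genuine gap.
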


\begin{proof}
  Suppose that $\abs{B_c\cap C_a}>1$. Then 
  by Lemma~\ref{lem:sep2}\eqref{enum:sep2-1} and symmetry in $\{a,b,c\}$, we have 
  $
  \rho(C_b\cap B_a)
  \le 2$.

  Since   $\rho(A_b\cap B_a,A_c\cup B_c)\le 1$, 
  we have 
  $\rho((A_b\cap B_a)\cup\{b\},A_c\cup B_c)\le 2$.
  As $A_b\cap B_a\cap C_a=A_b\cap B_c\cap C_b=\emptyset$, we have 
  $A_b\cap B_a=A_b\cap B_a\cap C_b=A_b\cap C_b-\{b\}$.
  Thus $\rho(A_b\cap C_b,A_c\cup C_a-\{a\})\le 2$.
  By Lemma~\ref{lem:sepequal}\eqref{enum:sepequal2}
  and the symmetry between $b$ and $c$,
  \[ 
    \rho(A_b\cap C_b)=\rho(A_b\cap C_b,A_c\cup C_a-\{a,c\})
    \le \rho(A_b\cap C_b,A_c\cup C_a-\{a\})
    \le 2.
  \]
  By the submodular inequality,
  \[  2+2\ge \rho(A_b\cap C_b)+\rho(C_b\cap B_a)
    \ge \rho(A_b\cap B_a\cap C_b)+\rho((A_b\cap C_b)\cup (C_b\cap B_a)).
  \]
  Since $G$ is prime and $\abs{A_b\cap B_a\cap C_b}=\abs{A_b\cap B_a}-\abs{A_b\cap B_a\cap C_a}\ge 2$,
  we have $\rho(A_b\cap B_a\cap C_b)\ge 2$.
  Therefore, $\rho((A_b\cap C_b)\cup (C_b\cap B_a))\le 2$, 
  which implies that 
  $\abs{(A_b\cap C_b)\cup (C_b\cap B_a)}\le 3$,
  because $G$ is \rc{3}{1}.
  However, $\abs{(A_b\cap C_b)\cup (C_b\cap B_a)}
  = \abs{A_b\cap B_c\cap C_b}+1+\abs{C_b\cap B_a}= 1+\abs{C_b\cap B_a}$
  and therefore $\abs{C_b\cap B_a}\le 2$, contradicting the assumption. 
  Thus we conclude that $\abs{B_c\cap C_a}\le 1$. 
  By Lemma~\ref{lem:meatiest},     at least two of   $G\setminus a$, $G\setminus b$, and $G\setminus c$ are \rc{3}{3}.
\end{proof}
\begin{lemma}\label{lem:extreme2}
  Let $((A_b,A_c),(B_a,B_c),(C_a,C_b))$ be a barrier of a prime \rc{3}{1} graph~$G$ with a triplet $\{a,b,c\}$
  such that 
  each of $\min(\abs{A_b},\abs{A_c})$, $\min(\abs{B_a},\abs{B_c})$, and $\min(\abs{C_a},\abs{C_b})$ is
  maximum.
  If $A_c\cap C_b\cap B_a=A_b\cap B_c\cap C_b=\emptyset$, then 
  at least two of   $G\setminus a$, $G\setminus b$, and $G\setminus c$ are \rc{3}{3}.
\end{lemma}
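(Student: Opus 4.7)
The plan is to reduce to the hypothesis of Lemma~\ref{lem:commonextreme1}. The key observation is that both empty triple intersections $A_c\cap C_b\cap B_a$ and $A_b\cap B_c\cap C_b$ lie inside $C_b\setminus\{b\}$, so $A_b\cap C_b=A_b\cap B_a\cap C_b$ and $A_c\cap C_b=A_c\cap B_c\cap C_b$; in other words, the $A$- and $B$-partitions of $C_b\setminus\{b\}$ coincide.

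First I would invoke Lemmas~\ref{lem:meatiest}, \ref{lem:sum1}, \ref{lem:lambda2}, and \ref{lem:lambda2'} to reduce to the case in which $\abs{A_b\cap B_c\cap C_a}\ge 2$ and each pairwise-intersection lower bound listed in Lemma~\ref{lem:meatiest} is satisfied strictly; otherwise at least two of $G\setminus a$, $G\setminus b$, $G\setminus c$ are already \rc{3}{3}. In particular $\abs{A_b\cap C_b}\ge 3$, and since $A_b\cap C_b\subseteq A_b\cap B_a$ by the coupling above, $\abs{A_b\cap B_a}\ge 3$.

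Next I would branch on the dichotomy in Lemma~\ref{lem:sep2}\eqref{enum:sep2-3}--\eqref{enum:sep2-4}. In the case $\rho(A_b\cap B_a,A_c\cup B_c)\le 1$, part~\eqref{enum:sep2-4} yields $\abs{A_b\cap B_a}\le 3$, forcing equality, so $A_b\cap B_a=A_b\cap C_b$ and hence $A_b\cap B_a\cap C_a=\emptyset$. Combined with the hypothesis $A_b\cap B_c\cap C_b=\emptyset$ and the equality $\abs{C_b\cap B_a}=\abs{A_b\cap B_a}=3$ (which uses $A_c\cap B_a\cap C_b=\emptyset$), all hypotheses of Lemma~\ref{lem:commonextreme1} are met and its conclusion is exactly what we want.

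The main obstacle is the remaining case $\rho(A_b\cap B_a,A_c\cup B_c)\ge 2$, where Lemma~\ref{lem:sep2}\eqref{enum:sep2-3} forces $\abs{A_c\cap B_c}\le 3$ and hence equality. Here the system of upper and lower bounds leaves a narrow residual ``balanced'' configuration in which every cut side $\abs{A_b},\abs{A_c},\abs{B_a},\abs{B_c},\abs{C_a},\abs{C_b}$ is too large for the \rc{3}{3} conclusion to follow from sizes alone. To dispose of it I would apply the symmetric forms of Lemma~\ref{lem:sep2}\eqref{enum:sep2-3}--\eqref{enum:sep2-4} to the $(B,C)$ and $(A,C)$ pairs, and combine the resulting rank bounds with the coupling of partitions on $C_b$ through repeated submodularity (mirroring the final submodular manipulations in the proof of Lemma~\ref{lem:commonextreme1}) to derive a contradiction with $G$ being prime and \rc{3}{1}.
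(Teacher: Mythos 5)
There is a genuine gap, in two places. First, your key ``coupling'' claim $A_b\cap C_b\subseteq A_b\cap B_a$ is false: $b$ lies in $A_b\cap C_b$ but not in $B_a$ (since $(B_a,B_c)$ partitions $V-\{b\}$). The correct identities are $A_b\cap C_b-\{b\}=A_b\cap B_a\cap C_b$ and $A_c\cap C_b=A_c\cap B_c\cap C_b$, so from $\abs{A_b\cap C_b}\ge 3$ you only get $\abs{A_b\cap B_a}\ge 2$, not $\ge 3$. Consequently, in your first branch the equality $\abs{A_b\cap B_a}=3$ is not forced, and even if it were, cardinality alone would not give the set equality $A_b\cap B_a=A_b\cap C_b$ (the two sets differ by $b$ in any case), so neither $A_b\cap B_a\cap C_a=\emptyset$ nor $\abs{C_b\cap B_a}=3$ --- both required to invoke Lemma~\ref{lem:commonextreme1} --- is established.

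Second, and more seriously, the branch $\rho(A_b\cap B_a,A_c\cup B_c)\ge 2$ is exactly where the content of the lemma lives, and you only describe a plan (``repeated submodularity \dots to derive a contradiction'') rather than carry it out. The paper's proof never branches at all: it uses primality of $G\setminus a$, $G\setminus b$, $G\setminus c$ together with the two emptiness hypotheses to transfer cut-rank lower bounds across the identities $B_c\cap C_b=A_c\cap C_b$, then $A_b\cap B_a=C_b\cap B_a$, then $A_c\cap C_a=A_c\cap B_a$, each time feeding Lemma~\ref{lem:sep2}\eqref{enum:sep2-3} to obtain $\abs{B_a\cap C_a}\le 3$, $\abs{A_c\cap B_c}\le 3$, $\abs{A_b\cap C_b}\le 3$ in turn; the accompanying counting arguments force $A_b\cap B_a\cap C_a=A_c\cap B_c\cap C_a=\emptyset$ and $\abs{B_a\cap A_c}=\abs{A_c\cap C_b}=\abs{A_b\cap B_a}=2$, whence $\abs{V}\le 12$, contradicting $\abs{V}\ge 13$. (In particular $\rho(A_b\cap B_a,A_c\cup B_c)\ge 2$ always holds here, so your first branch is vacuous and the second is the whole proof.) As it stands, the proposal does not contain an argument for the case that actually occurs.
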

\begin{proof}
  Since all graphs up to $11$ vertices are \rc{3}{3}, we may assume that $\abs{V}\ge 13$.
  By Lemma~\ref{lem:meatiest}, we may assume that 
  $\abs{A_b\cap B_a}>1$, 
  $\abs{A_c\cap C_a}>1$, 
  $\abs{B_c\cap C_b}>1$, 
  $\abs{A_b\cap B_c}> 1$,
  $\abs{B_c\cap C_a}> 1$,
  $\abs{C_a\cap A_b}>1$, 
  $\abs{C_b\cap B_a}>1$,
  $\abs{B_a\cap A_c}>1$, and 
  $\abs{A_c\cap C_b}>1$.
  By Lemma~\ref{lem:sep2}\eqref{enum:sep2-1} and symmetry in $\{a,b,c\}$, we have 
  $\abs{A_b\cap B_c}, 
  \abs{B_c\cap C_a},
  \abs{C_a\cap A_b},
  \abs{C_b\cap B_a},
  \abs{B_a\cap A_c},
  \abs{A_c\cap C_b}\in\{2,3\}$.

  Since $G\setminus c$ is prime by Lemma~\ref{lem:primetriplet}
  and $\abs{A_c\cap C_b}\ge 2$, we have 
  $\rho(A_c\cap C_b,A_b\cup C_a)\ge 2$.
  Since $G\setminus b$ is prime by Lemma~\ref{lem:primetriplet}, $\rho(A_c,A_b-\{b\})\ge 2$ and therefore $\rho(A_c,A_b)=\rho(A_c,A_b-\{b\})$. 
  This implies that 
  \[ \rho(A_c\cap C_b,A_b\cup
  C_a-\{b\})=\rho(A_c\cap C_b,A_b\cup
  C_a)\ge 2.\] 

  Observe that $B_c\cap C_b=A_c\cap B_c\cap C_b=A_c\cap C_b$
  and therefore 
  \[ \rho(B_c\cap C_b, B_a\cup C_a)=\rho(A_c\cap C_b,A_b\cup
C_a-\{b\})\ge 2.\] 
  By Lemma~\ref{lem:sep2}\eqref{enum:sep2-3} and symmetry in $\{a,b,c\}$, we have 
  \( \abs{B_a\cap C_a}\le 3\). 
  Since $\abs{B_a\cap C_a}=1+\abs{A_b\cap B_a\cap C_a}+\abs{A_c\cap B_a\cap C_a}
  = 1+\abs{A_b\cap B_a\cap C_a}+\abs{B_a\cap A_c}-\abs{A_c\cap B_a\cap C_b}\ge 3+\abs{A_b\cap B_a\cap C_a}$, 
  we deduce that \[ A_b\cap B_a\cap C_a=\emptyset \text{ and } \abs{B_a\cap A_c}=2.\]

  This implies that $A_b\cap B_a= A_b\cap B_a\cap C_b = C_b\cap B_a$ and therefore
  \[ 
  \rho(A_b\cap B_a,A_c\cup B_c)=\rho(C_b\cap B_a, C_a\cup B_c -\{a\}).
  \] 
  Since $G\setminus a$ is prime by Lemma~\ref{lem:primetriplet}, 
  $\rho(C_b,C_a-\{a\})\ge 2$ 
  and therefore 
  $\rho(C_b,C_a-\{a\})=\rho(C_b,C_a)$,
  which implies that 
  $\rho(C_b\cap B_a,C_a\cup B_c-\{a\})=\rho(C_b\cap B_a,C_a\cup B_c)$
  and as $G-b$ is prime by Lemma~\ref{lem:primetriplet}, we have $\rho(C_b\cap B_a,C_a\cup B_c)\ge 2$. This implies that $\rho(A_b\cap B_a,A_c\cup B_c)\ge 2$ and so by Lemma~\ref{lem:sep2}\eqref{enum:sep2-3}, we have 
  \[ \abs{A_c\cap B_c}\le 3.\] 
  Note that $\abs{A_c\cap B_c}=1+\abs{A_c\cap B_c\cap C_a}+\abs{A_c\cap B_c\cap C_b} = 1+ \abs{A_c\cap B_c\cap C_a}+\abs{A_c\cap C_b}-\abs{A_c\cap C_b\cap B_a}\ge 3+\abs{A_c\cap B_c\cap C_a}$ and therefore \[ A_c\cap B_c\cap C_a=\emptyset \text{ and } \abs{A_c\cap C_b}=2.\] 

  Again, this implies that $A_c\cap C_a= A_c\cap B_a\cap C_a=A_c\cap B_a$ and therefore 
  \[ 
    \rho(A_c\cap C_a,A_b\cup C_b)=\rho(A_c\cap B_a, A_b\cup B_c-\{c\}).
  \] 
  Since $G\setminus c$ is prime, $\rho(B_a,B_c-\{c\})\ge 2$
  and so $\rho(B_a,B_c)=\rho(B_a,B_c-\{c\}$, which implies that 
  $\rho(A_c\cap B_a, A_b\cup B_c-\{c\})=\rho(A_c\cup B_a,A_b\cup B_c)$.
  Since $G\setminus a$ is prime and $\abs{A_c\cap B_a}\ge 2$, we have $\rho(A_c\cup B_a,A_b\cup B_c)\ge 2$, which implies that 
  $\rho(A_c\cap C_a,A_b\cup C_b)\ge 2$. 
  By Lemma~\ref{lem:sep2}\eqref{enum:sep2-3} and symmetry in $\{a,b,c\}$, we have 
  \(
    \abs{A_b\cap C_b}\le 3
  \). 
  Now, observe that 
  $\abs{A_b\cap C_b}=1+\abs{A_b\cap B_a\cap C_b}+\abs{A_b\cap B_c\cap C_b}=
  1+\abs{A_b\cap B_a}-\abs{A_b\cap B_a\cap C_a}
  +\abs{A_b\cap B_c\cap C_b}
  \ge 3+\abs{A_b\cap B_c\cap C_b}
  $ and therefore 
  \( 
    \abs{A_b\cap B_a}=2
  \). 

  Finally,  we deduce that 
  \[ 
    \abs{V}\le 2+\abs{A_b\cap B_a}+\abs{A_b\cap B_c}+\abs{A_c\cap B_a}+\abs{A_c\cap B_c}
    \le 2+2+3+2+3=12,
  \] 
  contradicting the assumption. 
\end{proof}

Finally, we are ready to finish the proof of
Proposition~\ref{prop:internally4prime}.
\begin{proof}[Proof of Proposition~\ref{prop:internally4prime}]
  By Lemma~\ref{lem:primetriplet}, $G\setminus a$, $G\setminus b$, and $G\setminus c$ are prime.
  Suppose that none of $G\setminus a$, $G\setminus b$, and $G\setminus c$ is \rc{3}{2}.
  Then there exists a barrier $((A_b,A_c),(B_a,B_c),(C_a,C_b))$ of $G$ with the triplet $\{a,b,c\}$.
  We may choose a barrier 
  such that 
  each of $\min(\abs{A_b},\abs{A_c})$, $\min(\abs{B_a},\abs{B_c})$, and $\min(\abs{C_a},\abs{C_b})$ is
  maximum. 

  Since all graphs up to $11$ vertices are \rc{3}{3}, we may assume that $\abs{V}\ge 13$.
  By Lemma~\ref{lem:meatiest}, we may assume that 
  $\abs{A_b\cap B_a}>1$, 
  $\abs{A_c\cap C_a}>1$, 
  $\abs{B_c\cap C_b}>1$, 
  $\abs{A_b\cap B_c}> 1$,
  $\abs{B_c\cap C_a}> 1$,
  $\abs{C_a\cap A_b}>1$, 
  $\abs{C_b\cap B_a}>1$,
  $\abs{B_a\cap A_c}>1$,
  $\abs{A_c\cap C_b}>1$,
  $\abs{A_c\cap B_c}>2$, 
  $\abs{A_b\cap C_b}>2$, 
  and 
  $\abs{B_a\cap C_a}>2$.
  By Lemma~\ref{lem:sep2}\eqref{enum:sep2-1} and symmetry in $\{a,b,c\}$, we have 
  $\abs{A_b\cap B_c}, 
  \abs{B_c\cap C_a},
  \abs{C_a\cap A_b},
  \abs{C_b\cap B_a},
  \abs{B_a\cap A_c},
  \abs{A_c\cap C_b}\in\{2,3\}$
  and 
  $\rho(A_b\cap B_c), 
  \rho(B_c\cap C_a),
  \rho(C_a\cap A_b),
  \rho(C_b\cap B_a),
  \rho(B_a\cap A_c),
  \rho(A_c\cap C_b)\le 2$.

  By Lemma~\ref{lem:lambda2'}, we may assume that $\abs{A_b\cap B_c\cap C_a}\le 1$ or $\abs{A_c\cap C_b\cap B_a}\le 1$.
  We may assume that 
  $\abs{A_c\cap C_b\cap B_a}\le 1$
  by exchanging $b$ with $c$ if necessary.

We may assume that $\abs{A_b\cap B_c\cap C_a}\ge 2$
by Lemmas~\ref{lem:sum1} and~\ref{lem:11}.
By Lemma~\ref{lem:lambda2},
we may assume that $\abs{(A_b\cap B_c)\cup (B_c\cap C_a)\cup (C_a\cap A_b)}\le 3$.
By replacing $(a,b,c)$ with $(b,c,a)$ or $(c,a,b)$ if necessary, 
we may assume that $C_a\cap A_b=A_b\cap B_c=A_b\cap B_c\cap C_a$
and therefore \[A_b\cap B_a\cap C_a=A_b\cap B_c\cap C_b=\emptyset.\]
By Lemma~\ref{lem:extreme2}, we may assume that \[ \abs{A_c\cap C_b\cap B_a}=1.\]
Observe that $2\le \abs{A_b\cap B_a}=\abs{A_b\cap B_a\cap C_b}
\le \abs{C_b\cap B_a}-\abs{A_c\cap C_b\cap B_a} \le 2$ and so 
\[ \abs{A_b\cap B_a}=2 \text{ and }\abs{C_b\cap B_a}=3.\] 
By Lemma~\ref{lem:commonextreme1}, we may assume that 
$\rho(A_b\cap B_a,A_c\cup B_c)\ge 2$ 
and so Lemma~\ref{lem:sep2}\eqref{enum:sep2-3} implies that
$\rho(A_c\cap B_c)\le 2$ and $\abs{A_c\cap B_c}\le 3$.
By the submodular inequality,
\[ 2+2\ge \rho(A_c\cap B_c)+\rho(A_c\cap C_b)\ge \rho(A_c\cap B_c\cap
  C_b)+\rho((A_c\cap B_c)\cup (A_c\cap C_b)).\] 
Since $G$ is prime and $\abs{A_c\cap B_c\cap C_b}
=\abs{B_c\cap C_b}-\abs{A_b\cap B_c\cap C_b}=\abs{B_c\cap C_b}\ge 2$, 
we have $\rho(A_c\cap B_c\cap C_b)\ge 2$.
Therefore, $\rho((A_c\cap B_c)\cup (A_c\cap C_b))\le 2$ and 
$\abs{(A_c\cap B_c)\cup (A_c\cap C_b)}=
\abs{A_c\cap B_c}+\abs{A_c\cap C_b\cap B_a} =4$, 
a contradiction to the assumption that $G$ is \rc{3}{1}.
\end{proof} 
This completes the proof of Theorem~\ref{thm:main}
as Proposition~\ref{prop:internally4prime}
implies Theorem~\ref{thm:main}.

\section{Excluded pivot-minors for rank-width $k$}\label{sec:rwd}
As an application, we will discuss  \emph{excluded pivot-minors of
  graphs of rank-width at most $k$}, which are pivot-minor-minimal graphs
whose   rank-width is larger than $k$.

First, we review the definition of
rank-width, defined by Oum and Seymour~\cite{OS2004}.
A tree  is \emph{subcubic} if every vertex has degree 1 or 3.
A \emph{rank-decomposition} of a graph $G$
is a pair $(T,\mu)$
of a subcubic tree $T$
and a bijection $\mu:V(G)\rightarrow \{x:\text{$x$ is a leaf of
    $T$}\}$.
For a rank-decomposition $(T,\mu)$ and each edge $e$ of $T$,
we pick a component $C_e$ of $T\setminus e$
and we let $A_e=\{v\in V(G): \mu(v)\in V(C_e)\}$.
Then the \emph{width} of an edge $e$ of $T$ in a rank-decomposition
$(T,\mu)$
is defined as $\rho_G(A_e)$.
Since $T\setminus e$ has exactly two components and
$\rho_G(A_e)=\rho_G(V(G)- A_e)$, the width of $e$ is well
defined.
We define the \emph{width} of a rank-decomposition $(T,\mu)$ as
the maximum width of an edge $e$  over all edges $e$ of $T$.
The \emph{rank-width} $\rwd(G)$ of a graph $G$ is the minimum width of $(T,\mu)$
over all rank-decompositions $(T,\mu)$ of the graph.
See \cite{Oum2016} for a survey on rank-width.

We can define $k$-branched sets recursively as follows:
A  set $B$ of vertices is \emph{$k$-branched}
if and only if
$\rho_G(B)\le k$
and
either $\abs{B}=1$ or
there exists a proper nonempty subset $B'$ of $B$
such that
both $B'$ and $B- B'$ are $k$-branched.
A set $A$ of vertices of a graph $G$ is \emph{titanic}
if for every partition $(A_1,A_2,A_3)$ of $A$,
there exists $i\in\{1,2,3\}$ such that
$\rho_G(A_i)\ge \rho_G(A)$.
The following lemma originates from Robertson and Seymour
\cite{RS1991}.
A proof can be found in~\cite[Lemma 5.1]{Oum2004}.
\begin{lemma}\label{lem:tangle}
  Let $G$ be a graph.
  If $A$  is a titanic set such that
  $\rho_G(A)\le k=\rwd(G)$,
  then $V(G)- A$ is $k$-branched.
\end{lemma}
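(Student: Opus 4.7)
The plan is to take a rank-decomposition $(T,\mu)$ of $G$ of width $k=\rwd(G)$ and use the titanic property of $A$ to carve out from $T$ a subcubic tree $T'$ with leaves in bijection with $B := V(G) - A$ that witnesses $B$ as $k$-branched, i.e., every edge-cut of $T'$ induces a subset of $B$ with cut-rank at most $k$ in $G$.

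The core tool is the following \emph{one-sided cut-rank bound}. For each edge $e$ of $T$ with sides $(X_e, Y_e)$ (so $\rho_G(X_e) = \rho_G(Y_e) \le k$), applying the titanic property of $A$ to the partition $(X_e \cap A, Y_e \cap A, \emptyset)$ of $A$ yields at least one side, say $Y_e$, satisfying $\rho_G(Y_e \cap A) \ge \rho_G(A)$; then submodularity gives
\[
\rho_G(Y_e) + \rho_G(A) \ge \rho_G(Y_e \cap A) + \rho_G(Y_e \cup A) \ge \rho_G(A) + \rho_G(Y_e \cup A),
\]
whence $\rho_G(Y_e \cup A) \le \rho_G(Y_e) \le k$, and by cut-rank symmetry $\rho_G(Y_e \cup A) = \rho_G(V(G) - (Y_e \cup A)) = \rho_G(X_e \cap B)$. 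Thus, whenever the $A$-heavy side of an edge is $Y_e$, the other side's $B$-intersection is bounded by $k$.

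I would then use the three-partition version of titanic applied at each internal vertex of $T$ to identify a \emph{central structure} (a vertex $v^*$ or a bipolar edge $e^*$) such that, rooting $T$ at the center, every edge $e$ of $T$ has its center-containing side $A$-heavy. Orient each edge toward its $A$-heavy side; since titanic forbids the configuration in which all three subtrees at an internal vertex $v$ have their union with the vertex being simultaneously $A$-light, one traces along $A$-heavy directions until terminating at a center. Then construct $T'$ by removing the $A$-leaves of $T$ and suppressing resulting degree-2 vertices, obtaining a subcubic tree with leaves $B$. For each non-root vertex $u$ of $T'$ rooted at the center, the subset $B_u \subseteq B$ of leaves below $u$ equals $X_e \cap B$ for the edge $e$ of $T$ corresponding (under suppression) to the parent-edge of $u$, and the one-sided bound above yields $\rho_G(B_u) \le k$, certifying by the recursive definition that $B$ is $k$-branched.

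The principal obstacle is the existence of the central structure. The two-partition titanic alone does not preclude both sides of an edge being $A$-heavy (bipolar edges), nor does three-partition titanic directly preclude multiple sink configurations; overcoming this requires a careful argument showing that the walk-along-$A$-heavy-directions in $T$ must terminate either at a vertex $v^*$ where no incident edge orients away, or at a bipolar edge $e^*$ acting as a virtual sink, and moreover that such a center can always be arranged so that the one-sided bound applies uniformly to every edge of the pruned tree $T'$. This may be handled by a judicious choice of tie-breaking rule, or alternatively by locally modifying $(T,\mu)$ (without increasing width) to cluster the leaves of $A$ into a single subtree of $T$ — a rearrangement that the titanic property guarantees is possible.
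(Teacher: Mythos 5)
The paper does not actually prove this lemma: it attributes the statement to Robertson and Seymour and explicitly defers the proof to \cite[Lemma 5.1]{Oum2004}. So your proposal can only be judged on its own completeness. Your key computation is correct and is indeed the heart of the matter: if $Y_e$ is an $A$-heavy side of an edge ($\rho_G(Y_e\cap A)\ge\rho_G(A)$), then submodularity gives $\rho_G(Y_e\cup A)\le\rho_G(Y_e)\le k$, and hence $\rho_G(X_e\cap B)\le k$ for the opposite side.

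However, the step you yourself flag as ``the principal obstacle'' is a genuine gap, not a detail to be handled by tie-breaking. To make the pruned tree certify that $B$ is $k$-branched you need a root $w$ such that for \emph{every} relevant edge $e$ of $T$ the side containing $w$ is $A$-heavy. The titanic property constrains only the two sides of a single edge, or the three branches at a single internal vertex; it does not propagate along the tree, because the cut-rank function is not monotone: a superset of an $A$-heavy set need not be $A$-heavy, and a subset of an $A$-light set need not be $A$-light. Hence ``orient each edge toward an $A$-heavy side and take a sink'' controls only the edges incident with the sink, and two edges far apart in $T$ can both have their $A$-light sides facing each other without violating titanicity (in the three-part partition the \emph{middle} part can be the heavy one). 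So the existence of your ``central structure'' with the uniform one-sided bound does not follow from what you have written, and no choice of tie-breaking rule can manufacture a consistency that the hypotheses do not force. Your fallback --- rearranging $(T,\mu)$ without increasing width so that $A$ occupies a single subtree, ``a rearrangement that the titanic property guarantees is possible'' --- is asserted without proof and is essentially equivalent to the lemma itself, since such a decomposition immediately displays $V(G)-A$ as $k$-branched. A complete proof must do real work at exactly this point, for instance by interleaving the submodular inequality with the titanic property in a recursion over the sides of $T$, rather than by a single global choice of center.
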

The following lemma is a restatement  of~\cite[Lemma 5.3]{Oum2004}
and is a consequence of Proposition~\ref{prop:bixby} and Lemma~\ref{lem:tangle}.
We provide a proof.
\begin{lemma}\label{lem:bootstrap}
  Let $m$ be an integer.
  Let $G$ be a  graph
  such that  $\rwd(G)>m$ and
  $\rwd(G\setminus v)< \rwd(G)$ and $\rwd(G/v)< \rwd(G)$ for every vertex
  $v$ of $G$.
  If $G$ is \rc{m}{\ell}, then
  $G$ is \rc{(m+1)}{(6\ell+5m-5)}.
\end{lemma}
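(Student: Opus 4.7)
The plan is to proceed by contradiction. Suppose $G$ is not \rc{(m+1)}{(6\ell+5m-5)}, so there is $X\subseteq V(G)$ with $\rho_G(X)\le m$ and $\min(\abs{X},\abs{V(G)-X})\ge 6m+6\ell-4$. Since $G$ is \rc{m}{\ell}{} and $6m+6\ell-4>m+\ell$, the possibility $\rho_G(X)<m$ would force $\min(\abs{X},\abs{V(G)-X})<m+\ell$, a contradiction; hence $\rho_G(X)=m$. Set $k=\rwd(G)\ge m+1$.

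Next, I would fix any vertex $v$ and let $G'\in\{G\setminus v,G/v\}$ be the graph provided by Proposition~\ref{prop:bixby} that is \rc{m}{(2\ell+m-1)}. By hypothesis $\rwd(G')\le k-1$. Writing $X'=X\cap V(G')$, we have $\rho_{G'}(X')\le\rho_G(X)=m$ (using Lemma~\ref{lem:pivotcutrank} in the $G/v$ case and monotonicity of rank under column-deletion otherwise). The bounds $\abs{X'},\abs{V(G')-X'}\ge 6m+6\ell-5\ge 2m+2\ell-1$, combined with the fact that $G'$ is \rc{m}{(2\ell+m-1)}, force $\rho_{G'}(X')=m$. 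A pigeonhole estimate on $3$-partitions then shows that both $X'$ and $V(G')-X'$ are titanic in $G'$: any $3$-partition of $X'$ into parts of rank below $m$ would, by the rank-connectedness of $G'$, have each part of size at most $2m+2\ell-2$, contradicting $\abs{X'}>6m+6\ell-6$; the symmetric argument applies to $V(G')-X'$.

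Setting $k'=\rwd(G')\in\{m,\ldots,k-1\}$, Lemma~\ref{lem:tangle} applied to the titanic set $X'$ of rank $m\le k'$ yields that $V(G')-X'$ is $k'$-branched in $G'$, and symmetrically $X'$ is $k'$-branched. Combining these two $k'$-branched structures across the split $(X',V(G')-X')$, whose cut-rank is $m\le k'$, produces a rank-decomposition $(T,\mu)$ of $G'$ of width $k'$. Attaching $v$ back as a new leaf of $T$ yields a rank-decomposition of $G$ (or of a graph pivot-equivalent to $G$, having the same rank-width) of width at most $k'+1\le k$; if $k'\le k-2$, this already contradicts $\rwd(G)=k$, so we may assume $k'=k-1$.

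The main obstacle is the remaining subcase $\rwd(G')=k-1$, in which the naive $+1$ from re-attaching $v$ could raise some edge-rank to $k$. I would attach $v$ at the root edge of $T$ corresponding to the split $(X',V(G')-X')$; every other edge $e$ of $T$ then has $A_e\subseteq X'$ or $A_e\subseteq V-X$. For such $e$, a submodular comparison of the cuts $A_e$ and $X$ via Proposition~\ref{prop:submodular}, together with $\rho_G(X)=m$ and the fact that the branched structure of $X'$ was extracted from a titanic set, should show that the contribution of $v$ to $\rho_G(A_e)$ is absorbed by the rank-$m$ cut $(X,V-X)$, yielding $\rho_G(A_e)\le k-1$. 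Consequently the augmented rank-decomposition of $G$ has width $k-1$, contradicting $\rwd(G)=k$ and completing the proof.
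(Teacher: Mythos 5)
Your opening and middle steps track the paper's proof: reduce to a cut $X$ with $\rho_G(X)=m$ and both sides of size at least $6\ell+6m-4$, pass to the graph $G'\in\{G\setminus v,G/v\}$ supplied by Proposition~\ref{prop:bixby}, use the \rc{m}{(2\ell+m-1)}{} property of $G'$ and a pigeonhole on $3$-partitions to get a titanic set, and invoke Lemma~\ref{lem:tangle}. The divergence -- and the gap -- is in the endgame. The paper never builds a rank-decomposition of $G$. It first fixes the side $B$ of the cut that is \emph{not} $k$-branched in $G$ (one side must fail to be, else $\rwd(G)\le k$), chooses $v$ on the other side, shows $B$ \emph{is} $k$-branched in $G'$, deduces that some $S\subseteq B$ must satisfy $\rho_G(S)=\rho_{G'}(S)+1$, and then uses Lemma~\ref{lem:gga} to push that drop up to $B$ itself: $\rho_{G'}(B)=\rho_G(B)-1<m$, contradicting the \rc{m}{(2\ell+m-1)}{}ness of $G'$ since both $|B|$ and $|V(G')-B|$ are large. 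No decomposition of $G$ is ever assembled.

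Your route instead tries to certify $\rwd(G)\le k-1$ by re-attaching $v$ to a width-$(k-1)$ decomposition of $G'$, and the case $\rwd(G')=k-1$, which you yourself flag as ``the main obstacle,'' is not actually resolved: ``should show'' is a placeholder where the proof has to be. Concretely, the claim $\rho_G(A_e)\le k-1$ for every edge of the augmented tree fails on the side of the cut containing $v$. For $A_e$ contained in the $v$-side, Lemma~\ref{lem:gga} only gives
$\rho_G(A_e)\le\rho_{G'}(A_e)+\bigl(\rho_G(T)-\rho_{G'}(T)\bigr)$ where $T$ is the $v$-side of the cut minus $v$, and $\rho_G(T)$ can equal $m+1$ while $\rho_{G'}(T)=m$, so $\rho_G(A_e)$ can reach $k$. (On the side not containing $v$ the same inequality does close, because the rank-connectivity of $G'$ forces $\rho_{G'}$ of that whole side to equal $m=\rho_G$ of it; that computation is essentially the one the paper performs -- but applied to the non-$k$-branched side to reach a contradiction, not to certify a decomposition.) To repair your argument you would either have to control the $v$-side edges, which I do not see how to do in general, or switch to the paper's strategy of exploiting the side of the cut that is not $k$-branched in $G$.
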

\begin{proof}
  Let $k=\rwd(G)-1$.
  Suppose that $G$ is not \rc{(m+1)}{(6\ell+5m-5)}.
  There exists a partition $(A,B)$ of $V(G)$ such that
  $\rho_G(A)< m+1$ and
  $\abs{A},\abs{B}\ge (6\ell+5m-5)+(m+1)$.
  We may assume that $B$ is not $k$-branched.

  Let $v\in A$. By Proposition~\ref{prop:bixby},
  $G\setminus v$ or $G/v$ is \rc{m}{(2\ell+m-1)}.
  We may assume that $G\setminus v$ is \rc{m}{(2\ell+m-1)}.

  In every partition $(X_1,X_2,X_3)$ of $A-\{v\}$,
  there exists $i\in \{1,2,3\}$ such that
  $\abs{X_i}\ge \slantfrac{(\abs{A}-1)}{3}$.
  We may assume that $\abs{X_1}\ge 2\ell+2m-1$.
  Since $G\setminus v$ is \rc{m}{(2\ell+m-1)},
  $\rho_{G\setminus v}(X_1)\ge m\ge \rho_{G\setminus
      v}(A-\{v\})$.
  Therefore $A- \{ v\}$ is a titanic set of $G\setminus v$.
  By Lemma~\ref{lem:tangle},  $B$ is $k$-branched in $G\setminus v$.
  Since $B$ is not $k$-branched in $G$,
  there must exist a subset $X$ of $B$
  such that $\rho_G(X)=\rho_{G\setminus v}(X)+1$.
  By Lemma~\ref{lem:gga},
  $\rho_{G\setminus v}(X)+\rho_G(B)\ge
    \rho_{G}(X)+\rho_{G\setminus v}(B)$ and therefore
  $\rho_G(B)=\rho_{G\setminus v}(B)+1$.
  But this is a contradiction because $G\setminus v$ is
  \rc{m}{(2\ell+m-1)}{} and $\rho_{G\setminus v}(B)<m$.
\end{proof}
Lemma~\ref{lem:bootstrap} can be directly applied to show that, for
each $k\ge 2$,
every pivot-minor-minimal graph having rank-width larger than $k$
is \rc{3}{5}.
In the following proposition, we prove that in fact they are
\rc{3}{2},
and our proof uses Proposition~\ref{prop:fully}.
\begin{proposition}\label{prop:conn}
  Let $G$ be a graph such that $\rwd(G)>2$.
  If $\rwd(G\setminus v)< \rwd(G)$ and $\rwd(G/v)< \rwd(G)$ for every vertex
  $v$ of $G$,
  then
  $G$ is prime \rc{3}{2}.
\end{proposition}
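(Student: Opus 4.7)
My plan is to prove the proposition in four stages of increasing connectivity. First, $G$ is connected: otherwise any component $C_1$ with $\rwd(G[C_1])=\rwd(G)$ would survive the deletion of any vertex outside $C_1$, yielding $\rwd(G\setminus v)\ge\rwd(G)$ and contradicting the hypothesis. Hence $G$ is \rc{1}{0}. Applying Lemma~\ref{lem:bootstrap} with $m=1$, $\ell=0$ (valid since $\rwd(G)>1$) gives that $G$ is \rc{2}{0}, and combined with connectedness this makes $G$ prime. A second application with $m=2$, $\ell=0$ (valid since $\rwd(G)>2$) gives that $G$ is \rc{3}{5}.

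For the final improvement from \rc{3}{5} to \rc{3}{2}, suppose for contradiction that some $X\subseteq V$ satisfies $\rho_G(X)\le 2$ with $\abs{X},\abs{V-X}\ge 5$. Primality forces $\rho_G(X)=2$, and by greedily adding vertices $w\in V-X$ with $\rho_G(X\cup\{w\})=2$ I would obtain a fully closed set $F\supseteq X$ with $\abs{F}\ge 5$ and $\abs{V-F}\ge 2$; primality and size considerations handle the degenerate cases. Since $G$ is prime with $\abs{V}\ge 10\ge 8$, Proposition~\ref{prop:fully} produces $v\in F$ such that $H=G\setminus v$ or $G/v$ is prime; by hypothesis $\rwd(H)\le k:=\rwd(G)-1\ge 2$. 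I then mimic the titanic-set argument of Lemma~\ref{lem:bootstrap} inside $H$: primality of $H$ together with $\abs{F\setminus\{v\}}\ge 4$ makes $F\setminus\{v\}$ titanic in $H$, so Lemma~\ref{lem:tangle} applied in $H$ gives that $B:=V-F$ is $\rwd(H)$-branched and hence $k$-branched in $H$. Since $V$ is not $k$-branched in $G$ but $\rho_G(F)\le k$, at least one of $F$, $B$ fails to be $k$-branched in $G$; assuming $B$ does, Lemma~\ref{lem:gga} forces $\rho_G(B)=\rho_H(B)+1$, so $\rho_H(B)=1$ and $(B,F\setminus\{v\})$ is a split of $H$, contradicting primality of $H$.

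The main obstacle is justifying the ``assuming $B$ does'' step: a priori it could happen that $F$ itself is the non-$k$-branched side while $B$ is $k$-branched in $G$, in which case the argument above does not directly yield a contradiction. I expect to handle this by running the construction symmetrically, extending $V-X$ to a complementary fully closed set $F'$ (so that $V-F'\subseteq X$) and then applying Proposition~\ref{prop:fully} to whichever of $F$, $F'$ has the favorable side structure, combined with the observation that primality and the size bounds force both $F$ and $B$ to be titanic in $G$ (and hence $\rwd(G)$-branched by Lemma~\ref{lem:tangle}), yielding extra leverage in the unfavorable sub-case. Making this case analysis work uniformly is the crux of the proof.
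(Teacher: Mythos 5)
Your overall strategy is the paper's: bootstrap to primality via Lemma~\ref{lem:bootstrap}, locate a fully closed set, invoke Proposition~\ref{prop:fully} to get a prime $H\in\{G\setminus v, G/v\}$, make $F-\{v\}$ titanic in $H$, and derive a split of $H$ from the failure of $k$-branchedness. But the gap you flag yourself is real, and neither of your proposed repairs closes it. The problem is that the fully closed set must be built on the \emph{correct} side: you need $V-F$ to be non-$k$-branched (where $k=\rwd(G)-1$), and your greedy closure of $X$ gives no control over this. Your first repair (symmetrically closing $V-X$ to $F'$) faces the identical difficulty for $V-F'$. Your second repair is worse: Lemma~\ref{lem:tangle} applied \emph{in $G$} to a titanic set of cut-rank $\le \rwd(G)$ only yields that the complement is $\rwd(G)$-branched, which is one level too weak — every graph trivially admits such conclusions, and you need $(\rwd(G)-1)$-branchedness to contradict $\rwd(G)>k$. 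Note also that your "degenerate cases" with $2\le\abs{V-F}\le 4$ are not harmless: any set of size at most $4$ with cut-rank at most $2\le k$ is automatically $k$-branched, so in those cases $V-F$ is certainly $k$-branched and your main argument cannot run at all.

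The missing observation, which is how the paper resolves this, is a monotonicity statement: if $\rho_G(B)\le k$ and $B-\{w\}$ is $k$-branched, then $B$ is $k$-branched (adjoin the singleton $\{w\}$). Consequently, first fix the non-$k$-branched side: since not both $X$ and $V-X$ can be $k$-branched (else $\rwd(G)\le k$), assume $B_0=V-X$ is not. Now when you greedily move $w$ into the $X$-side while keeping cut-rank $2$, the complement stays non-$k$-branched by the contrapositive of the monotonicity statement, and in particular its size never drops below $5$ (sets of size $\le 4$ and cut-rank $\le k$ being $k$-branched). The paper phrases the same idea by choosing $B$ non-$k$-branched with $\abs{B}$ minimum and deducing that $A$ is fully closed. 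With this reordering your argument goes through; without it, the case "$F$ is the non-$k$-branched side" is genuinely unresolved.
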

\begin{proof}
  Let $k=\rwd(G)-1\ge 2$.
  It is easy to see that $G$ is connected and therefore $G$ is
  \rc{1}{0}.
  By Lemma~\ref{lem:bootstrap}, $G$ is \rc{2}{0}.
  So $G$ is prime.

  Suppose that $G$ is not \rc{3}{2}.
  Let $(A,B)$ be a partition of $V(G)$ such that
  $\abs{A}, \abs{B}\ge 5$ and $\rho_G(A)\le 2$.
  If both $A$ and $B$ were $k$-branched, then $G$ would have
  rank-width $k$.
  Therefore, we may assume that $B$ is  not $k$-branched.
  We may assume that $B$ is minimal with such properties.

  If $\rho_G(A\cup \{w\})\le 2$ for some $w\in B$,
  then $(A',B')=(A\cup \{w\}, B- \{w\})$ is another partition.
  By assumption, either $\abs{B}=5$ or $B-\{w\}$ is $k$-branched.
  If $\abs{B}=5$, then $B-\{w\}$ is $k$-branched.
  If $B-\{w\}$ is $k$-branched, then
  $B$ is $k$-branched, contrary to the assumption that $B$ is not.
  Therefore $\rho_G(A\cup \{w\})>2$ for all $w\in B$.
  So $\rho_G(A)=2$ and $A$ is fully closed.

  Then by Proposition~\ref{prop:fully}, there exists $v\in A$ such that
  $G\setminus v$ or $G/v$ is prime.
  We may assume that $G\setminus v$ is prime, 
  because otherwise we replace $G$ with $G\pivot vw$ for some neighbor $w$ of $v$.
  Since $\abs{A}\ge 5$,
  in every 3-partition $(X_1,X_2,X_3)$ of $A- \{v\}$,
  there exists $i$ such that $\abs{X_i}\ge 2$.
  Since $G\setminus v$ is prime,
  $\rho_{G\setminus v}(X_i)\ge 2\ge \rho_{G\setminus v}(A-
    \{v\})$
  and therefore
  $A-\{v\}$ is titanic in $G\setminus v$.
  Since $G\setminus v$ has rank-width $k$,
  $B$ is $k$-branched in $G\setminus v$.
  Since $B$ is not $k$-branched in $G$,
  there exists a subset $X$ of $B$ such that
  $\rho_{G\setminus v}(X)=\rho_G(X)-1$.
  By Lemma~\ref{lem:gga},
  $\rho_{G\setminus v}(X)+\rho_G(B)\ge
    \rho_{G}(X)+\rho_{G\setminus v}(B)$ and therefore
  $\rho_G(B)=\rho_{G\setminus v}(B)+1$.
  But this is a contradiction because $G\setminus v$ is
  prime but $\rho_{G\setminus v}(B)= 1$.
\end{proof}
By combining Lemma~\ref{lem:bootstrap} with Proposition
\ref{prop:conn},
we obtain the following corollary, which is an improvement over the
previous theorem~\cite[Theorem 5.4]{Oum2004} stating that
such graphs are \rc{(k+1)}{(\frac{6^k-1}{5}-k)}{}
for every $k\in \{2,3,\ldots,\rwd(G)-1\}$.
This improvement is due to our improvement of the base case for the induction; we use the same inductive step.

\begin{corollary}\label{cor:conngen}
  Let $G$ be a graph such that $\rwd(G)> 2$.
  If $\rwd(G\setminus v)<\rwd(G)$ and $\rwd(G/v)<\rwd(G)$
  for every vertex $v$ of $G$,
  then $G$ is
  \rc{(k+1)}{(\frac{(7/12)\cdot  6^k-1}{5}-k)}{}
  for every $k\in \{2,3,4,\ldots, \rwd(G)-1\}$.
\end{corollary}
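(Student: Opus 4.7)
The plan is a straightforward induction on $k$, with Proposition~\ref{prop:conn} providing the base case $k=2$ and Lemma~\ref{lem:bootstrap} driving the inductive step; this mirrors the argument of~\cite[Theorem 5.4]{Oum2004}, with the only difference being the stronger base case. Abbreviating $\ell_k := \frac{(7/12)\cdot 6^k - 1}{5} - k$, the goal is to show that $G$ is \rc{(k+1)}{\ell_k}{} for every $k\in\{2,3,\ldots,\rwd(G)-1\}$.

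For the base case, I would first compute $\ell_2 = (21-1)/5 - 2 = 2$. Since $\rwd(G)>2$ and the hypotheses on $\rwd(G\setminus v)$ and $\rwd(G/v)$ are precisely those of the corollary, Proposition~\ref{prop:conn} applies directly and gives that $G$ is prime \rc{3}{2}{}, which is exactly the claim at $k=2$.

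For the inductive step, assume $G$ is \rc{(k+1)}{\ell_k}{} for some $k$ with $2\le k\le \rwd(G)-2$. Taking $m=k+1$ and $\ell=\ell_k$ in Lemma~\ref{lem:bootstrap}, the hypothesis $\rwd(G)>m$ holds because $k+1\le \rwd(G)-1$, so the lemma yields that $G$ is \rc{(k+2)}{(6\ell_k+5(k+1)-5)}{} $=$ \rc{(k+2)}{(6\ell_k+5k)}{}. It only remains to verify the arithmetic identity $6\ell_k+5k = \ell_{k+1}$, which follows from
\[
  6\ell_k + 5k
  = \frac{(7/12)\cdot 6^{k+1} - 6}{5} - k
  \quad\text{and}\quad
  \ell_{k+1} = \frac{(7/12)\cdot 6^{k+1} - 1}{5} - (k+1),
\]
whose difference is $(-1+6)/5 - 1 = 0$.

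No substantial obstacle arises: the whole argument consists of a one-step induction and a routine algebraic check. The improvement over~\cite[Theorem 5.4]{Oum2004} is packaged entirely in the base case, since Proposition~\ref{prop:conn} strengthens the \rc{3}{5}{} that would come from applying Lemma~\ref{lem:bootstrap} once to a prime (i.e.\ \rc{2}{0}{}) graph to the tighter \rc{3}{2}{}, and the factor $7/12$ in the resulting formula is exactly the ratio needed to propagate this gain through the recurrence $\ell\mapsto 6\ell+5m-5$.
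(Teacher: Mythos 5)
Your proof is correct and is essentially identical to the paper's: induction on $k$ with Proposition~\ref{prop:conn} as the base case and Lemma~\ref{lem:bootstrap} as the inductive step, plus the arithmetic check $6\ell_k+5(k+1)-5=\ell_{k+1}$. If anything, your index bookkeeping is cleaner than the paper's (whose stated inductive hypothesis has a small off-by-one in the exponent/additive term that its final displayed computation silently corrects).
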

\begin{proof}
  We proceed by induction on $k$. 
  By Proposition~\ref{prop:conn}, 
  $G$ is \rc{3}{2}
  and so the statement is true for $k=2$. 
  Now let us assume that $2<k<\rwd(G)$ and  $G$ is \rc{k}{\ell} 
  for $\ell=\frac{(7/12)6^{k-1}-1}{5}-k$. 
  By Lemma~\ref{lem:bootstrap}, we deduce that 
  $G$ is \rc{k+1}{6\ell+5k-5}.
  This concludes the proof because 
  $ 6\ell+5k-5= 
  6\left(\frac{(7/12)\cdot  6^{k}-1}{5}-k\right)+5k-5=
    \frac{(7/12)6^{k+1}-1}{5}-(k+1)$. 
\end{proof}
Thanks to the improvement by Corollary~\ref{cor:conngen}, we can now
obtain an upper bound on the maximum number of vertices in such
graphs,
which is better than the previous upper bound $(6^{k+1}-1)/5$ in
\cite[Theorem 5.4]{Oum2004}.
\begin{theorem}\label{thm:upper}
  Let $k\ge 2$.
  If a graph $G$ has rank-width larger than $k$ but all of its proper
  pivot-minors have rank-width at most $k$, then
  $\abs{V(G)}\le \frac{ (7/12)\cdot 6^{k+1}- 1}{5}$.
\end{theorem}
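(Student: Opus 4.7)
The plan is to follow the argument of \cite[Theorem 5.4]{Oum2004} using the improved rank-connectedness from Corollary \ref{cor:conngen} in place of the weaker bound that was available to Oum in 2004. I will first convert the corollary into a usable form, then apply it to a rank-decomposition of a one-vertex pivot-minor, and finally translate the resulting size bound into the numerical value in the statement.

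Set $\ell_k = \frac{(7/12)6^k - 1}{5} - k$. By Corollary \ref{cor:conngen}, $G$ is \rc{(k+1)}{\ell_k}{}, meaning every $X \subseteq V(G)$ with $\rho_G(X) \leq k$ satisfies
\[ \min(\abs{X},\abs{V(G)-X}) < k + 1 + \ell_k = \frac{(7/12)6^k + 4}{5}. \]
Next, I pick any vertex $v$ of $G$. By hypothesis, $\rwd(G/v) \leq k$, so $G/v$ admits a rank-decomposition $(T,\mu)$ of width at most $k$, where $T$ is a subcubic tree with $\abs{V(G)} - 1$ leaves. Each edge of $T$ gives a cut of $G/v$ of rank $\leq k$, which (via Proposition 4.3 of \cite{Oum2004}, the same identity used to prove Lemma \ref{lem:bixbyineq}) corresponds to a cut of $G$ of rank $\leq k+1$. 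I would then transplant the bounding argument in \cite[Theorem 5.4]{Oum2004}: using the \rc{(k+1)}{\ell_k}{}ness to control the smaller side of every edge-cut in $T$, one concludes that $T$ has at most $6(k+1+\ell_k) - 6$ leaves, hence
\[ \abs{V(G)} \leq 6(k+1+\ell_k) - 5. \]
A direct calculation finishes the proof:
\[ 6(k+1+\ell_k) - 5 = 6 \cdot \frac{(7/12)6^k + 4}{5} - 5 = \frac{(7/2)6^k + 24 - 25}{5} = \frac{(7/12)6^{k+1}-1}{5}. \]

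The only real obstacle is the intermediate size bound $\abs{V(G)} \leq 6(k+1+\ell_k) - 5$, and this is precisely the content of \cite[Theorem 5.4]{Oum2004}: the argument there depends on the rank-decomposition of $G/v$ and on the parameter $\ell_k$ only through its value, not its specific form, so it transfers verbatim. Consequently, nothing new has to be invented here; the entire improvement over the previous bound $(6^{k+1}-1)/5$ comes from the sharper base case Proposition~\ref{prop:conn} (prime \rc{3}{2}{} rather than the older prime \rc{3}{5}), propagated to all $k$ by the inductive machinery of Lemma \ref{lem:bootstrap} and Corollary \ref{cor:conngen}.
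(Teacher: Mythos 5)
Your final formula and the arithmetic agree with the paper's, but the route you sketch for the intermediate size bound has a genuine gap, and it is not the route the paper takes. You propose to take a width-$k$ rank-decomposition of $G/v$, lift each edge-cut to a cut of $G$ via the identity $\rho_{G/v}(X)=\rho_G(X\cup\{v\})-1$, thereby obtaining cuts of $G$ of rank at most $k+1$, and then control the smaller side of each such cut by the \rc{(k+1)}{\ell_k}{}-ness of $G$. That last step does not work: by definition, \rc{(k+1)}{\ell_k}{}-ness only restricts partitions whose cut-rank is \emph{strictly less} than $k+1$, i.e.\ at most $k$. The lifted cut can have rank exactly $k+1$, in which case $G$'s rank-connectedness says nothing about it. Moreover, even if the control were available, your sketch would yield a bound on the order of $3(k+1+\ell_k)$, not the $6(k+1+\ell_k)$ appearing in your computation; the factor $6=3\times 2$ cannot materialize from this argument alone.

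The missing step is Proposition~\ref{prop:bixby}. The paper applies it to conclude that for any fixed vertex $v$, at least one of $G\setminus v$, $G/v$ is \rc{(k+1)}{(2\ell_k+k)}; it then takes a width-$k$ rank-decomposition of that one-vertex-smaller graph, extracts a balanced partition $(A,B)$ with $\rho(A)\le k$ and $\abs{A},\abs{B}\ge(n-1)/3$, and applies the rank-connectedness \emph{of the smaller graph} to get $\abs{A}\le 2\ell_k+2k$. The factor of $2$ in $2\ell_k+k$ is exactly what produces the eventual $6$. Your appeal to ``the bounding argument in [Oum2004, Theorem 5.4]'' presumably subsumes this use of Proposition~\ref{prop:bixby} implicitly, but the reduction you describe in your own words replaces it with a step that would fail. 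To repair the proposal, drop the cut-lifting idea entirely: instead, invoke Proposition~\ref{prop:bixby}, pass the rank-connectedness down to $G\setminus v$ or $G/v$, and argue within that graph.
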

\begin{proof}
  Let $n=\abs{V(G)}$.
  Let $N=\frac{(7/12)\cdot 6^{k}-1}{5}-k$.
  By Corollary~\ref{cor:conngen},
  $G$ is \rc{(k+1)}{N}.
  Let $v$ be a vertex of $G$.
  We may assume, by Proposition~\ref{prop:bixby}, that $G\setminus v$ is
  \rc{(k+1)}{(2N+k)}.
  Since $G\setminus v$ has rank-width at most $k$,
  there exists a partition $(A,B)$ of $V(G\setminus v)$ such that
  $\rho_{G\setminus v}(A)\le k$
  and $\abs{A},\abs{B}\ge (n-1)/3$.
  We may assume that $\abs{A}\le \abs{B}$.
  Since $G\setminus v$ is \rc{(k+1)}{(2N+k)},
  $\abs{A}<(2N+k)+(k+1)$.
  Therefore $n\le   6(N+k)+1=\frac{ (7/12)\cdot 6^{k+1}- 1}{5}$.
\end{proof}

When $k=2$, we can get a better bound $16$ by using 
Theorem~\ref{thm:main} on \rc{3}{2}{} graphs.
Previous bounds are $43$ by~\cite[Theorem 5.4]{Oum2004},
and $25$ by Theorem~\ref{thm:upper}. The author tried to compute the complete list of excluded pivot-minors for the class of graphs of rank-width at most $2$ by searching graphs up to $16$ vertices, but was not able to finish computation.

\begin{theorem}\label{thm:size}
  If a graph $G$ has rank-width $3$, but all of its proper pivot-minors
  have rank-width at most $2$, then
  $\abs{V(G)}\le 16$.
\end{theorem}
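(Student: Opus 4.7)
The plan is as follows. First I would verify that $G$ satisfies the hypotheses of Proposition~\ref{prop:conn}: since $\rwd(G)=3>2$ and every proper pivot-minor of $G$ has rank-width at most $2$, we have $\rwd(G\setminus v),\rwd(G/v)\le 2<\rwd(G)$ for every $v\in V(G)$ (noting $G/v$ is a pivot-minor of $G$). Hence $G$ is prime \rc{3}{2}{}. If $\abs{V(G)}\le 9$ the bound holds trivially, so I may assume $\abs{V(G)}\ge 10$ and apply Theorem~\ref{thm:main} to obtain a prime \rc{3}{3}{} pivot-minor $H$ with $\abs{V(H)}=\abs{V(G)}-1$. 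Since $H$ is a proper pivot-minor of $G$, it follows from the hypothesis that $\rwd(H)\le 2$.

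Next I would extract a balanced low-cut-rank separation of $H$ from a rank-decomposition $(T,\mu)$ of $H$ of width at most $2$. A standard centroid argument on the subcubic tree $T$ with $\abs{V(H)}$ leaves yields an edge $e$ of $T$ whose two components each contain at least $\abs{V(H)}/3$ leaves; writing $(A,B)$ for the corresponding partition of $V(H)$, we have $\rho_H(A)\le 2$ and $\min(\abs{A},\abs{B})\ge \abs{V(H)}/3$. Since $H$ is \rc{3}{3}{} and $\rho_H(A)<3$, the definition forces $\min(\abs{A},\abs{B})<3+3=6$; combined with $\min(\abs{A},\abs{B})\ge \abs{V(H)}/3$, this yields $\abs{V(H)}\le 15$ and hence $\abs{V(G)}=\abs{V(H)}+1\le 16$.

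The only substantive input is Theorem~\ref{thm:main}; with it in hand, no real obstacle remains, since the remainder follows the same balancing template as the proof of Theorem~\ref{thm:upper}, but with the stronger \rc{3}{3}{} property replacing the weaker rank-connectedness that the generic argument extracts via Proposition~\ref{prop:bixby}. This is exactly why specializing Theorem~\ref{thm:upper} to $k=2$ gives only the bound $25$, whereas the present approach gives $16$.
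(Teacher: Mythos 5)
Your proposal is correct and follows essentially the same route as the paper: apply Proposition~\ref{prop:conn} to get that $G$ is prime \rc{3}{2}{}, invoke Theorem~\ref{thm:main} to obtain a \rc{3}{3}{} pivot-minor $H$ on $\abs{V(G)}-1$ vertices, and then use a balanced edge of a width-$2$ rank-decomposition of $H$ to force $\abs{V(H)}/3\le 5$. The only cosmetic difference is that the paper argues by contradiction from $\abs{V(G)}\ge 17$ while you derive the bound directly (correctly using integrality of $\min(\abs{A},\abs{B})$ to pass from $\min<6$ to $\abs{V(H)}\le 15$).
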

\begin{proof}
  By Proposition~\ref{prop:conn}, $G$ is prime \rc{3}{2}.
  Suppose that $\abs{V(G)}\ge 17$.
  By Theorem~\ref{thm:main},
  $G$ has a \rc{3}{3}{} pivot-minor $H$ such that
  $\abs{V(H)}=\abs{V(G)}-1\ge 16$.
  Since the rank-width of $H$ is $2$,
  there is a partition $(A,B)$ of $V(H)$ such that
  $\abs{A},\abs{B}\ge \abs{V(H)}/3>5$
  and $\rho_H(A)\le 2$, contradicting the assumption that $G$ is \rc{3}{3}.
\end{proof}
If we had a stronger connectivity condition on $G$,
then we could obtain a stronger bound from Proposition~\ref{prop:internally4prime}.

\begin{proposition}
  Suppose that a graph  $G$ has  rank-width $3$, but
  all of its proper pivot-minors have rank-width at most $2$.
  If $G$ is \rc{3}{1},
  then $\abs{V(G)}\le 14$.
\end{proposition}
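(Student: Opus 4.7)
We argue by contradiction, supposing $n:=\abs{V(G)}\ge 15$. The main reduction is that any prime \rc{3}{2} pivot-minor $H$ of $G$ with $\abs{V(H)}=n-1$ would itself yield a contradiction: since $H$ is a proper pivot-minor of $G$, $\rwd(H)\le 2$, so a rank-decomposition of $H$ supplies a partition $(A,B)$ of $V(H)$ with $\rho_H(A)\le 2$ and $\min(\abs{A},\abs{B})\ge\lceil(n-1)/3\rceil\ge 5$, contradicting the \rc{3}{2}-condition $\min\le 4$. Hence it suffices to produce such an $H$.

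If $G$ is $3$-rank-connected, Lemma~\ref{lem:4prime} immediately yields, for any vertex $v$, a prime \rc{3}{2} pivot-minor of the form $G\setminus v$ or $G/v$ on $n-1$ vertices, which is the desired contradiction. Otherwise $G$ has a $3$-subset with cut-rank $2$, and Lemma~\ref{lem:triplet} lets us assume, after replacing $G$ by a pivot-equivalent graph (which preserves the rank-width, pivot-minor-minimality, and being \rc{3}{1}), that $\{a,b,c\}$ is a triplet of $G$. Then Proposition~\ref{prop:internally4prime} splits into two sub-cases: in case (i), one of $G\setminus a,G\setminus b,G\setminus c$ is prime \rc{3}{2} on $n-1$ vertices, again contradicting the reduction above.

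The remaining sub-case, which is the main obstacle, is (ii) of Proposition~\ref{prop:internally4prime}: two of the three deletions, say $H_1=G\setminus a$ and $H_2=G\setminus b$, are only prime \rc{3}{3}. Each $H_i$ still has $\rwd\le 2$, and the balanced-edge argument combined with the \rc{3}{3}-upper bound forces a partition $(X_i,Y_i)$ of $V(H_i)$ with $\rho_{H_i}(X_i)=2$ and $\min(\abs{X_i},\abs{Y_i})=5$, so $n\in\{15,16\}$. By Lemma~\ref{lem:sep}, exactly one of $b,c$ lies in $X_1$ and exactly one of $a,c$ lies in $X_2$, and the \rc{3}{1}-property of $G$, applied to $X_i$ (of size $5$, with complement of size $\ge 10$), forces $\rho_G(X_i)=3$; Lemma~\ref{lem:sep} also provides the cross-rank identities $\rho_G(X_i-\{u_i\},Y_i\cup\{w_i\})=3$ for the appropriate $u_i,w_i\in\{a,b\}$. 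The plan is then to combine submodularity (Proposition~\ref{prop:submodular}, together with Lemmas~\ref{lem:gga} and~\ref{lem:gagb}) on $X_1$, $X_2$, and the triplet $T=\{a,b,c\}$, using the identity $\rho_G(T)=2$, to locate a set $S\subseteq V(G)$ with $\rho_G(S)\le 2$ and $\min(\abs{S},\abs{V-S})\ge 4$, violating the \rc{3}{1}-property. The detailed combinatorics, which depend on the sizes of the cross-intersections $\abs{X_1\cap X_2}$, $\abs{X_1\cap Y_2}$, and on the placement of $b,c$ relative to $H_2$'s partition, are the hardest part of the argument and closely parallel the case analysis in Section~\ref{sec:internally4prime}.
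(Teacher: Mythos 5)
Your handling of case (i) of Proposition~\ref{prop:internally4prime} and the preliminary reduction (via Lemma~\ref{lem:4prime}, Lemma~\ref{lem:triplet}, and the bipartition argument against a \rc{3}{2} pivot-minor on $n-1$ vertices) is correct and matches the paper's first two paragraphs. The gap is in case (ii): what you describe there is a plan, not a proof, and it is not the plan the paper carries out. Your bipartition-of-$H_i$ argument only yields $\min(\abs{X_i},\abs{Y_i})\le 5$, hence $n\le 16$, leaving $n\in\{15,16\}$ alive, and the proposed repair — locating a \rc{3}{1}-violating set $S$ by submodularity applied to $X_1$, $X_2$, and the triplet — is unsubstantiated. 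It is also structurally awkward, since $X_1$ and $X_2$ partition different vertex sets $V(G)-\{a\}$ and $V(G)-\{b\}$, and it is far from clear that Lemmas~\ref{lem:gga}, \ref{lem:gagb}, \ref{lem:sep} and submodularity will conspire to produce a set of the required shape.

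The paper's proof at this point takes a genuinely different and simpler route: rather than a bipartition of $V(H_1)$ of width $2$, it extracts a \emph{tripartition}. Concretely, take a width-$2$ rank-decomposition $(T,\mu)$ of $G\setminus a$; since $G\setminus a$ is \rc{3}{3} and every edge of $T$ induces a cut of rank at most $2$, every edge can be oriented toward its ``small'' side (at most $5$ leaves). A tree has a vertex with all incident edges outgoing, so (unless $\abs{V(G)}\le 6$) this gives a partition $(A_1,A_2,A_3)$ of $V(G)-\{a\}$ with $\abs{A_i}\le 5$ and $\rho(A_i)\le 2$ for all $i$. The triplet identity $\rho(\{b,c\},V(G)-\{a,b,c\})=\rho(\{a,b,c\},V(G)-\{a,b,c\})$ then lets one absorb $a$ into whichever side contains $b,c$ (or into $A_1\cup A_2$ if $b,c$ land in distinct parts), and a short \rc{3}{1} case check on $G$ gives $\abs{V(G)}\le 14$. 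This tripartition-by-orientation idea is the key step your sketch is missing, and without it the argument does not reach $14$.
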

\begin{proof}
  Suppose that
  there is a vertex $x\in G$ such that $G\setminus x$ or $G/x$ is
  \rc{3}{2}.
  We may assume that $G\setminus x$ is \rc{3}{2}{} because otherwise we can replace $G$ with $G\pivot xy$ for some neighbor $y$ of $x$.
  Since  $G\setminus x$ has rank-width 2, there is a partition
  $(A,B)$ of $V(G)- \{x\}$ such that $\rho(A,B)=2$
  and $\abs{A},\abs{B}\ge (\abs{V(G)}-1)/3$.
  Since $G\setminus x$ is \rc{3}{2},
  $\min(\abs{A},\abs{B})\le 4$. Therefore, $\abs{V(G)}\le 13$.
  We  may now assume that neither $G/x$ nor $G\setminus x$ is
  \rc{3}{2}.

  By Lemma~\ref{lem:4prime}, $G$ is not $3$-rank-connected
  and therefore there is a set $X=\{a,b,c\}$ of vertices such that
  $\rho(X)=2$.
  By Lemma~\ref{lem:triplet}, we may assume that $X$ is  a triplet
  of $G$, because otherwise we can apply pivoting.

  By Proposition~\ref{prop:internally4prime},
  we may assume that $G\setminus a$ is
  \rc{3}{3}.
  Let $(T,\mu)$ be a rank-decomposition of $G\setminus a$ having
  width~2.
  We orient every edge $uv$ of $T$ from $u$ to $v$
  if the partition given by $T\setminus uv$ has at most $5$ leaves
  in the component containing $v$.
  Since $T$ has more vertices than edges, there is a vertex $v$ of $T$
  such that all edges incident with $v$ are outgoing.
  If $v$ is a leaf, then $\abs{V(G)}\le 6$.
  So we may assume that $v$ is not a leaf and therefore there exists
  a partition $(A_1,A_2,A_3)$ of vertices of $G\setminus a$ such that
  $\abs{A_i}\le 5$ and $\rho(A_i)\le 2$ for all $i\in\{1,2,3\}$.

  If there exists $i$ such that $b,c\in A_i$,
  then $\rho(A_i\cup\{a\})\le 2$ because $\rho(\{b,c\},V(G)-
    X)=\rho(X,V(G)-X)$.
  Since $G$ is \rc{3}{1},
  either $\abs{A_i}+1\le 3$ or $\abs{V(G)- A_i}-1\le 3$.
  If $\abs{A_i}\le 2$, then $\abs{V(G)}\le 1+5+5+2=13$.
  If $\abs{V(G)- A_i}\le 4$, then $\abs{V(G)}\le 4+5=9$.
  We conclude that
  if there exists $i$ such that $b,c\in A_i$,
  then $\abs{V(G)}\le 13$.

  Now suppose that no $A_i$ contains both $b$ and $c$.
  We may assume that $b\in A_1$ and $c\in A_2$.
  Then $\rho(A_1\cup A_2,A_3)\le 2$ and therefore
  $\rho(A_1\cup A_2\cup\{a\},A_3)\le 2$.
  Since $G$ is \rc{3}{1},
  either $\abs{A_1\cup A_2}+1\le 3$ or
  $\abs{A_3}\le 3$.
  If  $\abs{A_1\cup A_2}\le 2$, then $\abs{V(G)}\le 1+2+5=8$.
  If $\abs{A_3}\le 3$, then $\abs{V(G)}\le 1+5+5+3=14$.
\end{proof}

\section{Conclusions}
We prove that every prime \rc{3}{2} graph with at least $10$ vertices 
has a prime \rc{3}{3} pivot-minor with one less vertices
and use this to show that every graph in the list of forbidden pivot-minors for the class of graphs of rank-width at most $2$ has at most $16$ vertices.

We would like to suggest a few open problems.
Hall~\cite{Hall2005} used Theorem~\ref{thm:hall} to prove the following.
\begin{theorem}[Hall~{\cite[Theroem 1.2]{Hall2005}}]\label{thm:hallmain}
  If $M$ is a $3$-connected matroid up to $3$-separators of size $5$, then 
  there is an element $x\in E(M)$ such that the cosimplification of $M\setminus x$
  or the simplification of $M/x$ is $3$-connected up to $3$-separators of size $5$, with a cardinality of $\abs{E(M)}-1$ or $\abs{E(M)}-2$.
\end{theorem}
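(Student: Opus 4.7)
The plan is to mimic the strategy of Theorem~\ref{thm:main} in the matroid setting, using Theorem~\ref{thm:hall} as the main engine whenever $M$ is actually internally $4$-connected. I would begin with a case split on the connectivity of $M$: either $M$ is internally $4$-connected, or $M$ has a $3$-separation $(X,Y)$ with $4 \le \min(|X|,|Y|) \le 5$.  In the first case, if $M$ contains a triangle $\{a,b,c\}$, Theorem~\ref{thm:hall} directly provides an element $x\in\{a,b,c\}$ such that $M\setminus x$ is $4$-connected up to $3$-separators of size at most $5$, hence a fortiori $3$-connected up to $3$-separators of size $5$. The cosimplification of $M\setminus x$ will then collapse at most one new series class (formed from the two remaining triangle elements together with a companion element), which is exactly what accounts for the cardinality being $|E(M)|-1$ or $|E(M)|-2$.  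When $M$ has no triangle but has a triad, I would invoke the dual of Theorem~\ref{thm:hall}, contract an element of the triad, and simplify.

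If $M$ has neither a triangle nor a triad, every element is in both a simple and cosimple configuration, so neither simplification nor cosimplification will remove extra elements. Here I would attempt a direct submodularity argument: suppose for contradiction that every single-element deletion and every single-element contraction produces a $3$-separator of size at least $6$. Applying the standard submodularity inequality $\lambda_M(S)+\lambda_M(T)\ge \lambda_M(S\cap T)+\lambda_M(S\cup T)$ to pairs of the resulting large separators should force either the union or the intersection to violate the assumption that $M$ is $3$-connected up to $3$-separators of size $5$.

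In the second case, where $M$ is not internally $4$-connected, I would choose a $3$-separation $(X,Y)$ with $|X|$ minimum subject to $|X|\ge 4$, so $|X|\in\{4,5\}$. I would analyze the matroid analog of a ``fully closed'' set $X$, namely one where no element of $Y$ can be added without strictly increasing $\lambda_M$. Mimicking the role of Proposition~\ref{prop:fully} in the graph proof, I would search within $X$ for an element $x$ such that (co)simplifying $M\setminus x$ or $M/x$ preserves the required $3$-separator bound. The key observation is that an ``obstructing'' element $x$ would have to lie in the intersection of several small $3$-separators simultaneously; submodularity bounds the number of such bad elements, leaving at least one good element in $X$.

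The main obstacle will be the analog of Proposition~\ref{prop:internally4prime}: when $M$ has several small overlapping $3$-separators of size $4$ or $5$, these interact in complicated ways, and we must show some element survives as ``good'' across all of them. This mirrors the barrier analysis of Section~\ref{sec:internally4prime} and will require a long case analysis of how multiple small $3$-separators intersect, with repeated uses of the submodular inequality for $\lambda_M$. A secondary subtlety is ensuring that the (co)simplification step does not itself introduce $3$-separators of size larger than $5$; this must be checked by tracing how parallel or series classes formed by the removal of $x$ merge with existing separators.
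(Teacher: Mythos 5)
First, a point of order: the paper does not prove this statement at all. Theorem~\ref{thm:hallmain} is quoted verbatim from Hall~\cite{Hall2005} in the Conclusions section purely as motivation for an open problem (whether an analogue exists for prime \rc{3}{3}{} graphs). There is therefore no proof in the paper to compare yours against, and any assessment must be of your sketch on its own terms as a reconstruction of Hall's argument.

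On those terms, there are genuine gaps. The most concrete error is your handling of the triangle case: you claim that after applying Theorem~\ref{thm:hall} to get $M\setminus x$ for $x$ in a triangle $\{a,b,c\}$, ``the cosimplification of $M\setminus x$ will then collapse at most one new series class (formed from the two remaining triangle elements together with a companion element).'' Deleting an element of a triangle does not put the remaining two elements into a series class; it is \emph{contracting} an element of a triangle that creates a parallel pair (handled by simplification of $M/x$), and deleting an element of a \emph{triad} that creates a series pair (handled by cosimplification of $M\setminus x$). This deletion/contraction mix-up propagates through your accounting for the cardinality $\abs{E(M)}-1$ or $\abs{E(M)}-2$. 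A second, structural gap is that the hypothesis of Theorem~\ref{thm:hallmain} is strictly weaker than internal $4$-connectivity, so Theorem~\ref{thm:hall} is only available in one branch of your case split; the other branches --- $M$ not internally $4$-connected, and $M$ internally $4$-connected with no triangle or triad --- contain essentially all of the difficulty, and for both you offer only the assertion that submodularity ``should force'' a contradiction or that a good element ``would have to lie in the intersection of several small $3$-separators.'' These are precisely the places where Hall's paper (and the analogous Sections~\ref{sec:triplet} and~\ref{sec:internally4prime} here) require a long, delicate intersection analysis, so the proposal as written does not constitute a proof.
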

Note that Theorem~\ref{thm:hallmain} ensures that obtained minors achieve the same connectivity requirement and so it is more useful for mathematical induction.
We would like to ask whether there is an analog of Theorem~\ref{thm:hallmain} for prime \rc{3}{3} graphs.
So far Theorem~\ref{thm:main} does not produce pivot-minors with the same connectivity. 

Another problem is to determine the complete list of forbidden pivot-minors for the class of graphs of rank-width $2$. The author wrote a C code to compute the complete list on a big computing cluster and yet $16$ is still too big to search all graphs. Up to $12$ vertices there are not too many forbidden pivot-minors according to the computation.

\subsection*{Acknowledgements}
The author would like to thank anonymous reviewers for 
their careful reading and useful suggestions. 
The author would also like to thank Duksang Lee and an anonymous reviewer 
for finding a mistake in the statement of Lemma~\ref{lem:sep} in the first version.


\begin{thebibliography}{10}

  \bibitem{Allys1994}
  L.~Allys.
  \newblock Minimally {$3$}-connected isotropic systems.
  \newblock {\em Combinatorica}, 14(3):247--262, 1994.
  
  \bibitem{Bixby1982}
  R.~E. Bixby.
  \newblock A simple theorem on {$3$}-connectivity.
  \newblock {\em Linear Algebra Appl.}, 45:123--126, 1982.
  
  \bibitem{Bouchet1987b}
  A.~Bouchet.
  \newblock Reducing prime graphs and recognizing circle graphs.
  \newblock {\em Combinatorica}, 7(3):243--254, 1987.
  
  \bibitem{Cunningham1982}
  W.~H. Cunningham.
  \newblock Decomposition of directed graphs.
  \newblock {\em SIAM J. Algebraic Discrete Methods}, 3(2):214--228, 1982.
  
  \bibitem{GZ2006}
  J.~Geelen and X.~Zhou.
  \newblock A splitter theorem for internally 4-connected binary matroids.
  \newblock {\em SIAM J. Discrete Math.}, 20(3):578--587 (electronic), 2006.
  
  \bibitem{GZ2008}
  J.~Geelen and X.~Zhou.
  \newblock Generating weakly 4-connected matroids.
  \newblock {\em J. Combin. Theory Ser. B}, 98(3):538--557, 2008.
  
  \bibitem{Geelen1995}
  J.~F. Geelen.
  \newblock {\em Matchings, matroids and unimodular matrices}.
  \newblock PhD thesis, University of Waterloo, 1996.
  
  \bibitem{Hall2005}
  R.~Hall.
  \newblock A chain theorem for 4-connected matroids.
  \newblock {\em J. Combin. Theory Ser. B}, 93(1):45--66, 2005.
  
  \bibitem{Murota2000}
  K.~Murota.
  \newblock {\em Matrices and matroids for systems analysis}, volume~20 of {\em
    Algorithms and Combinatorics}.
  \newblock Springer-Verlag, Berlin, 2000.
  
  \bibitem{Oum2004}
  S.~Oum.
  \newblock Rank-width and vertex-minors.
  \newblock {\em J. Combin. Theory Ser. B}, 95(1):79--100, 2005.
  
  \bibitem{Oum2016}
  S.~Oum.
  \newblock Rank-width: algorithmic and structural results.
  \newblock {\em Discrete Appl. Math.}, 231:15--24, 2017.
  
  \bibitem{OS2004}
  S.~Oum and P.~Seymour.
  \newblock Approximating clique-width and branch-width.
  \newblock {\em J. Combin. Theory Ser. B}, 96(4):514--528, 2006.
  
  \bibitem{Oxley1992}
  J.~G. Oxley.
  \newblock {\em Matroid theory}.
  \newblock Oxford University Press, New York, 1992.
  
  \bibitem{RS1991}
  N.~Robertson and P.~Seymour.
  \newblock Graph minors. {X}. {O}bstructions to tree-decomposition.
  \newblock {\em J. Combin. Theory Ser. B}, 52(2):153--190, 1991.
  
  \bibitem{Spinrad1989}
  J.~Spinrad.
  \newblock Prime testing for the split decomposition of a graph.
  \newblock {\em SIAM J. Discrete Math.}, 2(4):590--599, 1989.
  
  \bibitem{Truemper1985}
  K.~Truemper.
  \newblock A decomposition theory for matroids. {I}. {G}eneral results.
  \newblock {\em J. Combin. Theory Ser. B}, 39(1):43--76, 1985.
  
  \bibitem{Truemper1992}
  K.~Truemper.
  \newblock {\em Matroid decomposition}.
  \newblock Academic Press Inc., Boston, MA, 1992.
  
  \end{thebibliography}
\end{document}